\documentclass{birkjour}
\usepackage{hyperref}
 \newtheorem{thm}{Theorem}[section]
 
 \newtheorem{lem}[thm]{Lemma}
 \newtheorem{prop}[thm]{Proposition}
 \theoremstyle{definition}
 \newtheorem{defn}[thm]{Definition}
 \theoremstyle{remark}
 \newtheorem{rem}[thm]{Remark}
 
 \numberwithin{equation}{section}

\newcommand{\R}{\mathbb{R}}

\begin{document}

%
%
%
%
%
%
%
%
%

\title{Normalized solutions for a class of gradient-type Schr\"odinger systems under Neumann boundary conditions in bounded domains\footnote{Corresponding author: Xiaojun Chang}}

\author[X. J. Chang et al.]{Xiaojun Chang}

\address{%
School of Mathematics and Statistics \& Center for Mathematics and Interdisciplinary Sciences\\
Northeast Normal University\\
Changchun Jilin 130024\\
P.R. China}

\email{changxj100@nenu.edu.cn}

\author{Yuxin Li}
\address{School of Mathematics and Statistics\\
Northeast Normal University\\
Changchun Jilin 130024\\
P.R. China}
\email{liyx097@nenu.edu.cn}

\author{Yohei Sato}
\address{Department of Mathematics\\
Saitama University\\
Shimo-Okubo 255, Sakura-ku Saitama-shi, 338-8570\\
Japan}
\email{ysato@rimath.saitama-u.ac.jp}

\author{Yuxuan Zhang}
\address{School of Mathematics and Statistics\\
Northeast Petroleum University\\
Daqing Heilongjiang 163318\\
P.R. China}
\email{zhangyx595@nenu.edu.cn}

\subjclass{35J50, 35Q55, 47J30, 35J47.}
\keywords{Gradient-type Schr\"odinger systems, $L^2$-supercritical, Bounded domains, Neumann boundary condition, Variational methods.}


\begin{abstract}
We investigate the existence of normalized solutions  to the gradient-type Schr\"odinger system
\begin{equation*}
\begin{cases}
-\Delta u+ V_1(x)u+\lambda u= uv^2  & {\rm in} \,~ \Omega,\\
-\Delta v+ V_2(x)v+\lambda v=  u^2v  & {\rm in} \,~ \Omega 
\end{cases}
\end{equation*}
subject to the mass constraint $\int_{\Omega}\left(|u|^2+|v|^2 \right)dx=a>0$ and Neumann boundary conditions, where
$\Omega\subset \mathbb{R}^3$ is a smooth bounded domain, each $V_i$ is continuous, and $\lambda$ is a Lagrange multiplier.
Applying a minimax principle that incorporates Morse index information, we establish the existence of nontrivial normalized solutions of mountain pass type. The proof is based on a refined blow-up analysis adapted to such gradient-type systems, together with new Liouville-type theorems for finite Morse index solutions of the associated limit  systems in $\mathbb{R}^3$ and $\mathbb{R}^3_+$.
\end{abstract}

\maketitle

\markboth{X. J. Chang et al.}{Normalized solutions for gradient-type systems}

\section{Introduction}
This paper is devoted to the study of the existence of
normalized solutions to the gradient-type Schr\"odinger system under Neumann boundary conditions:
\begin{equation}\label{grad}
\begin{cases}
-\Delta u+V_1(x)u+ \lambda u=  uv^2,  & x\in \, \Omega,\\
-\Delta v+V_2(x)v+ \lambda v=  u^2v,  & x\in \, \Omega,\\
\frac{\partial u}{\partial \nu}=\frac{\partial v}{\partial \nu}=0,  \, &x\in\partial \Omega
\end{cases}
\end{equation}
under the mass constraint
\begin{equation}\label{grad2}
	\int_{\Omega}\left(|u|^2+|v|^2\right)dx=a,
\end{equation}
where $\Omega\subset \mathbb{R}^3$ is a smooth bounded domain, $V_i: \overline{\Omega} \to \mathbb{R}, i=1,2,$ are continuous functions, $\nu$ stands for the unit outer normal vector to $\partial \Omega$, $a$ is a positive constant, and the parameter $\lambda$ serves as a Lagrange multiplier.

The study of normalized solutions for nonlinear Schr\"odinger (NLS) equations and systems has attracted considerable attention, largely due to their important physical applications in areas such as Bose-Einstein condensates and nonlinear optics \cite{F2010,Malomed2008}. Research in this area has flourished since Jeanjean's pioneering work on the $L^2$-supercritical NLS equation \cite{Jeanjean1997}.
We refer the interested reader to \cite{AJM2022,Bartsch2017,BM2020,BMRV2021,CT2024,IM2020,IkNo,JJLV2022,JL2022,JL2020,LM2024,MRV2022,WW-2022,ZZ2022} for results on $\mathbb{R}^N$, and to \cite{CDGS2025, NTV-2014,NTV-2015,NTV-2019,PV-2017,PVY-2025,SZ2025} for those on bounded domains. For related work on metric graphs, we refer to \cite{CJS2024,BCJS2023,DJS2024} and the references therein.

It is well known that any weak solution $(u,v)$ of \eqref{grad}-\eqref{grad2} corresponds to a critical point in the space $H^1(\Omega)\times H^1(\Omega)$ of the functional
\[J(u,v):= \int_{\Omega}\left(|\nabla u|^2+ |\nabla v|^2\right) dx+\int_{\Omega}V_1(x)u^2dx+\int_{\Omega}V_2(x)v^2dx-\int_{\Omega}  u^2v^2 dx\]
restricted to the set
\[\mathcal{S}_a:=\{(u,v)\in H^1(\Omega)\times H^1(\Omega) : \int_{\Omega} \left(|u|^2+|v|^2\right)dx= a\}. \]
From the Gagliardo-Nirenberg inequality (see Lemma \ref{gnineq}), we know that the system \eqref{grad} exhibits $L^2$-supercritical characteristics, which implies that the associated functional $J$ is unbounded from below on the constraint set $\mathcal{S}_a$.

When $\Omega=\mathbb{R}^3$, Bartsch, Jeanjean and Soave \cite{BJS2016} investigated the following problem
\begin{equation}\label{bdddd}
\begin{cases}
-\Delta u+ \lambda_1 u=\mu_1 u^3+ \beta uv^2,  & x\in \mathbb{R}^3,\\
-\Delta v+ \lambda_2 v=\mu_2 v^3+ \beta u^2v,  & x\in \mathbb{R}^3,\\
\end{cases}
\end{equation}
with the prescribed mass constraints:
\begin{equation}\label{bdddd2}
	\int_{\mathbb{R}^3} |u|^2 dx =a_1>0,~~\int_{\mathbb{R}^3} |v|^2 dx =a_2>0.
\end{equation}
It was shown in \cite{BJS2016} that for arbitrary masses $a_i$ and positive parameter $\beta$, there exist thresholds $\beta_1, \beta_2>0$ such that the system \eqref{bdddd}-\eqref{bdddd2} admits a positive radial solution when $0<\beta<\beta_1$ or $\beta>\beta_2$. The case for $\beta<0$ was later addressed in 
\cite{Bartsch2017,BS2019}. Furthermore, Bartsch, Zhong and Zou \cite{BZW-2021} demonstrated that normalized solutions exist for any $a_1, a_2>0$ when $\beta>0$ lies within a broader range. For further recent results regarding \eqref{bdddd}-\eqref{bdddd2} and the explicit new ranges of $\beta>0$, we refer the reader to \cite{JZZ2024}. For studies on more general Schr\"odinger systems in $\mathbb{R}^N$, see \cite{BJ2018,GJ2018}.

 Guo et al. \cite{GuoLW-2019} considered the following system in $\mathbb{R}^2$:
\begin{equation}\label{diantai}
	\begin{cases}
		-\Delta u+  V_1(x)u+\lambda u= \mu_1 u^3+ \beta uv^2,  & x\in\mathbb{R}^2,\\
-\Delta v+ V_2(x)v+ \lambda v= \mu_2v^3+ \beta u^2v,  & x\in \mathbb{R}^2,\\
	\end{cases}
\end{equation}
subject to a different mass constraint:
\begin{equation}\label{vhgsqk}
	\int_{\mathbb{R}^2} \left(|u|^2+|v|^2\right) dx=1.
\end{equation}
The authors established criteria for the existence and non-existence of ground states for the system \eqref{diantai}-(\ref{vhgsqk}). Moreover, as $\beta$ approaches a critical value $\beta^*:= \mu^*+ \sqrt{(\mu^*-\mu_1)(\mu^*-\mu_2)}$,  they analyzed the uniqueness and symmetry-breaking of ground states under trapping potentials. Here
 $0<\mu_i<\mu^*:=\|w\|_2^2$, and $w$ denotes the unique positive solution of $-\Delta w+ w= w^3$ in $\mathbb{R}^2$. For further studies using this type of mass constraint, see \cite{GH2025,GY2023,GuoLW2019TAMS}.

 The study of normalized solutions to NLS equations in bounded domains was initiated by 
 Noris et al. \cite{NTV-2014} on the unit ball with Dirichlet boundary conditions, and later extended to  general bounded domains by Pierotti and Verzini \cite{PV-2017} in the $L^2$-supercritical setting. Subsequently, Noris, Tavares and Verzini \cite{NTV-2019} proved the existence of local minimum normalized solutions for the Sobolev critical case, a result recently generalized by \cite{PVY-2025,CLY2025} who established the existence of mountain pass type solutions. In the presence of a potential, Bartsch et al. \cite{BQZ2024} proved the existence of mountain pass type normalized solutions for NLS equations on sufficiently large star-shaped domains.

For Schr\"odinger systems with potentials on bounded domains, early results were obtained by Noris, Tavares and Verzini \cite{NTV-2015}, who studied the
existence of normalized solutions for system \eqref{diantai} under constraint \eqref{bdddd2} in $\Omega\subset \mathbb{R}^N$ with $N\le 3$. Later, in \cite{NTV-2019}, they 
extended their analysis to arbitrary dimensions ($N\ge 1$) and demonstrated the existence of local minimum normalized solutions.

All the aforementioned results on bounded domains concern Dirichlet boundary conditions. By contrast, the literature on normalized solutions with Neumann boundary conditions remains relatively scarce. 
 The first contribution in this direction is due to Pellacci et al. \cite{PPVV-2021}, who studied concentration phenomena for both types of boundary conditions.  Dovetta, Serra and Tentarelli \cite{DST2025} established non-uniqueness of normalized ground states of NLS equations with pure powers nonlinearities near the $L^2$-critical exponent on planar polygons
under Neumann boundary conditions. More recently, \cite{CRZ2025} proved the existence of mountain pass type normalized solutions for the autonomous 
$L^2$-supercritical NLS under Neumann boundary conditions.

In this paper, we investigate the existence of normalized solutions for the gradient-type Schr\"odinger system \eqref{grad}. Although ground state solutions and their concentration phenomena have been studied for such systems in $\mathbb{R}^N$ \cite{CS-2019,CS-2020} and on bounded domains \cite{SW-2015}, the corresponding normalized solution problem has remained unexplored and constitutes the main purpose of the present work.

Since the system is of $L^2$-supercritical, the associated energy functional is unbounded below on $\mathcal{S}_a$, which implies that the functional exhibits a mountain pass geometry.
This naturally leads us to seek normalized solutions of mountain pass type. The primary obstacle is the construction of a bounded Palais–Smale sequence from the mountain pass structure.  In $\mathbb{R}^N$, this is typically achieved by combining the Pohozaev identity with a global scaling argument. For problems on bounded domains, however, this strategy fails due to the loss of translational invariance and the presence of uncontrollable boundary terms in the Pohozaev identity. 
 Consequently, the techniques developed for $\mathbb{R}^N$ do not directly apply.

A further complication stems from the specific  structure of \eqref{grad}. In $\R^N$, system \eqref{grad} can be obtained as the formal limit of system \eqref{diantai} under the scaling $(u,v)\mapsto (\frac{u}{\sqrt{\beta}},\frac{v}{\sqrt{\beta}})$ as $\beta\to +\infty$. For systems like \eqref{diantai}, one can often reduce the problem to a single equation by exploiting semi-trivial solutions (where at least one component  identically 
zero), see \cite{BJS2016,JZZ2024}.  For the gradient-type Schr\"odinger system  \eqref{grad}, such reduction is impossible. Moreover,  semi-trivial solutions of \eqref{grad} exist only when the Lagrange multiplier $\tilde\lambda$ coincides with an eigenvalue of $-\Delta+V_i(x)$ under Neumann boundary conditions. The central challenge is therefore to construct nontrivial solutions (with both components nonzero), a task that requires the development of new analytical tools. For further background and recent advances on gradient-type Schr\"odinger systems, we refer to \cite{Alves-2007, Alves-2005}.

\medskip

Our main result is as follows.
\begin{thm}\label{Thm1.1}
There exists $\hat{a}^*>0$ such that for every $a\in (0, \hat{a}^*)$,  system \eqref{grad}-\eqref{grad2} admits a nontrivial normalized solution $(u^*,v^*)\in H^1(\Omega)\times H^1(\Omega)$ of mountain pass type for some $\lambda\in \mathbb{R}$.
\end{thm}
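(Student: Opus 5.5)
Since $J$ is unbounded below on $\mathcal{S}_a$ but Pohozaev/scaling tools are unavailable on $\Omega$, I would follow the monotonicity-trick / Morse-index strategy of Bartsch--Soave, Chang--Jeanjean--Soave and Pierotti--Verzini--Yu. Introduce the family
\[
J_\rho(u,v)=\int_\Omega\big(|\nabla u|^2+|\nabla v|^2+V_1u^2+V_2v^2\big)\,dx-\rho\int_\Omega u^2v^2\,dx,\qquad \rho\in[\tfrac12,1].
\]
First I check a uniform mountain pass geometry on $\mathcal{S}_a$ for small $a$. Using the Gagliardo--Nirenberg inequality of Lemma \ref{gnineq} on $H^1(\Omega)$ (which in the Neumann setting carries an $L^2$ term), I get $\int u^2v^2\le C(\|\nabla(u,v)\|_2^2+a)^{3/2}a^{1/2}$. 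Hence on the set $A_k=\{\|\nabla(u,v)\|_2^2\le k\}$ the energy stays below a level $\ell_1$, while on $\partial A_{2k}$ it exceeds $\ell_2>\ell_1$. This holds provided $a<\hat a^*$ with $k$ chosen in terms of $a$ and $\|V_i\|_\infty$. Constants such as $(\sqrt{a/2|\Omega|},\sqrt{a/2|\Omega|})$ lie in $A_k$. Concentrating bumps $(\varphi_t,\varphi_t)$, rescaled at an interior point and normalized, give $J_\rho\to-\infty$. Together these yield a minimax class $\Gamma$ and levels $c_\rho\ge\ell_2$, with $\rho\mapsto c_\rho$ nonincreasing.

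Next, by the Jeanjean monotonicity trick, combined with the Fang--Ghoussoub / Borthwick--Chang--Jeanjean--Soave minimax principle with second-order information on the manifold $\mathcal{S}_a$, for a.e.\ $\rho$ I obtain a bounded Palais--Smale sequence at level $c_\rho$ with approximate Morse index $\le 2$. The embedding $H^1(\Omega)\hookrightarrow L^4$ is compact and the sequence is bounded, so it converges. This produces a critical point $(u_\rho,v_\rho)$ with multiplier $\lambda_\rho$ and Morse index $\le 2$. Using $|u|,|v|$ in the minimax paths (since $J_\rho(|u|,|v|)=J_\rho(u,v)$), I can also arrange $u_\rho,v_\rho\ge0$.

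Then I take $\rho_n\to1$; the key step is a uniform $H^1$ bound, and this is where I expect the main obstacle. From the equation, the bound on $\|\nabla(u_n,v_n)\|$ reduces to a bound on $\lambda_n$. If $|\lambda_n|\to\infty$, in fact if $\lambda_n\to-\infty$ (the other sign is excluded by testing), I would perform a blow-up analysis. Rescale around maximum points of $u_n+v_n$ with scale $\varepsilon_n=|\lambda_n|^{-1/2}$, and use the Morse index bound to obtain local $L^\infty$ estimates. The limits are nonnegative solutions of $-\Delta U\pm U=UV^2$, $-\Delta V\pm V=U^2V$ in $\R^3$ or in $\R^3_+$ with Neumann data, depending on the ratio $\mathrm{dist}(x_n,\partial\Omega)/\varepsilon_n$, and they have finite Morse index. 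In the half-space case I reflect evenly. The new Liouville theorems announced in the abstract exclude such limits: with $+\lambda$ there are none, and with $-\lambda$ the finite Morse index is impossible. If instead a nontrivial $\R^3$ ground-state-type limit does exist, it costs mass bounded below by a fixed constant, and the mass $a$ taken small forbids it. Either way $\lambda_n$ stays bounded, so the sequence is bounded, and compactness gives a critical point of $J$ at level $c_1\ge\ell_2$ on $\mathcal{S}_a$.

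Finally I must show that $(u^*,v^*)$ is nontrivial, meaning both components are nonzero. If, say, $v^*\equiv0$, then $u^*$ is an eigenfunction of $-\Delta+V_1$ with Neumann conditions and mass $a$. Its energy is $\mu_k a$, where $\mu_k$ is the corresponding eigenvalue, and since $u^*\ge0$ this forces $u^*$ to be the first eigenfunction, so the energy is $\mu_1 a$. Choosing $\ell_2>\max_i\mu_1(V_i)\,a$, which is possible for small $a$ since $\ell_2\sim k\gg a$, contradicts $c_1\ge\ell_2$. The same argument applies to $u^*\equiv0$, which gives Theorem \ref{Thm1.1}.
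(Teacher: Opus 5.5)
There is a genuine gap in the step you flag as the main obstacle, the uniform bound on $\lambda_n$.

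\textbf{The sign is reversed.} With the equation written as $-\Delta u+V_1u+\lambda u=uv^2$, testing with $(u_n,v_n)$ and comparing with the energy gives $c_n+\lambda_n a=\rho_n\int_\Omega u_n^2v_n^2\,dx\ge 0$. So testing excludes $\lambda_n\to-\infty$. The case you must rule out is $\lambda_n\to+\infty$, equivalently $\int_\Omega u_n^2v_n^2\,dx\to\infty$, and testing cannot exclude it.

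\textbf{Liouville cannot exclude the limits.} In that case the rescaled pairs $\bar u_n(x)=\lambda_n^{-1/2}u_n(\lambda_n^{-1/2}x+P_n)$, $\bar v_n$ converge to solutions of $-\Delta U+U=UV^2$, $-\Delta V+V=U^2V$ in $\mathbb{R}^3$ or a half-space. This system has nontrivial bounded, decaying solutions of finite Morse index, e.g.\ $(U_0,U_0)$ with $U_0$ the ground state of $-\Delta w+w=w^3$. In the paper the Liouville theorems (Propositions \ref{wholeliou}, \ref{halfliou}) concern the system \emph{without} linear term. They are used only at the scale $\tilde\epsilon_n=|u_n(P_n)|^{-1}$, to show $\lambda_n\tilde\epsilon_n^2\to\tilde\lambda>0$. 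This is exactly the local $L^\infty$ bound $\|u_n\|_\infty\lesssim\lambda_n^{1/2}$ that your rescaling presupposes.

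\textbf{The mass heuristic goes the wrong way.} Because the problem is $L^2$-supercritical, a bump at scale $\epsilon_n=\lambda_n^{-1/2}$ carries mass $\int_{B_{R\epsilon_n}(P_n)}u_n^2\,dx=\epsilon_n\int_{B_R}\bar u_n^2\,dx\to 0$. So taking $a$ small forbids nothing. The contradiction must come from the other side: the fixed mass $a$ is too large to be carried by concentrating profiles.

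\textbf{The missing ingredient} is a global description of $(u_n,v_n)$ (Theorem \ref{globalbegrad}). Selecting local maxima iteratively and using the Morse index bound, one gets at most two blow-up points $P_n^i$ with $\lambda_n|P_n^i-P_n^j|^2\to\infty$. After extending across $\partial\Omega$ by reflection, a comparison argument gives the uniform decay $|u_n|,|v_n|\le Ce^{CR}\lambda_n^{1/2}\sum_i e^{-c\lambda_n^{1/2}|x-P_n^i|}$ outside $\bigcup_i B_{R\lambda_n^{-1/2}}(P_n^i)$. Together with the rescaled limits this yields $\int_\Omega(u_n^2+v_n^2)\,dx=O(\lambda_n^{-1/2})$, contradicting the constraint $=a$ (Proposition \ref{compactnessgrad}). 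Without this decay step your argument does not bound $\lambda_n$, so you get neither compactness as $\rho_n\to1$ nor a critical point of $J$.

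Your nontriviality argument via nonnegativity and the first Neumann eigenvalue is a reasonable alternative to the paper's Morse-index restriction $-\lambda\in\{\mu_0,\mu_1,\mu_2\}$. However, it requires a version of the minimax theorem that produces (almost) nonnegative Palais--Smale sequences, which you assert but do not justify.
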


The proof of Theorem \ref{Thm1.1} adapts the approach developed in \cite{BCJS2024,CRZ2025}, which combines the monotonicity trick with Morse index information and blow-up analysis. This approach has since been applied in various contexts; see, for example, \cite{BCJS2023,CJS2024,WC2024}.

Our results extend the arguments in \cite{CRZ2025} from a single equation to gradient-type Schr\"odinger systems. A key ingredient is a blow-up analysis for the approximating solutions sequence $\{(u_n,v_n)\}$, which relies on their uniform Morse index information. We develop a systematic blow-up analysis framework for \eqref{grad}, which we believe can be naturally extended to a broader class of Schr\"odinger systems. 

Within this framework, we provide a robust argument to show that the solution sequence not only exhibits the standard exponential decay away from the boundary (see \cite{NT-1991, NT-1993}), but also uniform exponential decay away from the blow-up points. This uniform decay property is crucial for our analysis.
In addition, as a key part of this blow-up analysis, we establish new Liouville type theorems for finite Morse index solutions of the limiting systems in $\mathbb{R}^3$ and $\mathbb{R}^3_+$. The first such result for bounded finite Morse index solutions of nonlinear elliptic equations was 
proven by Bahri and Lions \cite{BL-1992}. Related Liouville theorems have since been obtained in many contexts, including scalar equations \cite{F-2007,HRS-1998-1, HRS-1998-2} and elliptic systems \cite{L-2021, YZ-2019}. To the best of our knowledge, however, none of the existing theorems directly apply to our setting. We therefore prove these Liouville-type theorems, which are adapted to the specific structure of \eqref{grad}.

Finally, to verify that the solution obtained is nontrivial, we investigate the asymptotic behavior of the Lagrange multiplier as the mass parameter
 $a$ tends to $0^+$.  This analysis relies on the observation that the corresponding energy remains uniformly bounded below by a positive constant for sufficiently small $a$.
Combining this lower bound with the Morse index information and properties of the eigenfunctions of  
 $-\Delta+V_i(x)$ under Neumann boundary condition, a contradiction argument yields the desired conclusion.

\medskip

The paper is organized as follows. In Section \ref{mp sol}, we first establish the existence of local minimizers and prove that the functionals $J_\rho$ (defined in \eqref{lajgrad}) admit a uniform mountain pass geometry for $\rho \in [\frac{1}{2},1]$. Then we obtain a mountain pass critical point $(u_\rho,v_\rho)$ on $\mathcal{S}_a$ for almost every such $\rho$. The nontriviality of solutions with small mass is also discussed. Section \ref{blow up ana} is devoted to a blow-up analysis of approximating solutions under the prescribed mass constraint and a uniform Morse index bound. Section \ref{proof of theorem} contains the proof of our main result, Theorem \ref{Thm1.1}. Finally, Appendix \ref{liouville thm} provides the proofs of the Liouville-type theorems for bounded solutions with finite Morse index to the gradient-type systems introduced in Section \ref{blow up ana}.

\section{Mountain pass type solutions for approximating problems}\label{mp sol}
In this section, we study a family of parameterized constraint functionals and establish the existence of mountain pass type normalized solutions for the approximating problems. Moreover, we provide some analysis to show that these solutions are nontrivial whenever the mass is sufficiently small.

To begin, we recall the following Gagliardo-Nirenberg inequality(see \cite{N-1966}).
\begin{lem}\label{gnineq}
For every $N\geq 3$ and $r\in(2,2^*)$, there exists a constant $C_{N,r,\Omega}$, depending on $N$, $r$ and $\Omega$, such that
\begin{equation*}
	\|u\|_r\leq C_{N,r,\Omega}\|u\|_2^{1-\gamma_r}\|u\|_{H^1}^{\gamma_r},~~\forall u\in H^1(\Omega),
\end{equation*}
where $\gamma_r:=\frac{N(r-2)}{2r}$.
\end{lem}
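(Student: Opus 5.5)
This is the classical Gagliardo--Nirenberg inequality on a bounded smooth domain, cited from \cite{N-1966}. I would prove it in two steps: first a Lebesgue interpolation inequality, then the Sobolev embedding for $H^1(\Omega)$. The Sobolev embedding is available for all $u\in H^1(\Omega)$, without any boundary condition, because $\Omega$ is smooth and bounded.

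\textbf{Step 1 (interpolation).} For $r\in(2,2^*)$, with $2^*=\frac{2N}{N-2}$, I write $\frac1r=\frac{1-\theta}{2}+\frac{\theta}{2^*}$ and solve for $\theta$:
\[
\theta=\frac{\frac12-\frac1r}{\frac12-\frac1{2^*}}=\frac{\frac{r-2}{2r}}{\frac1N}=\frac{N(r-2)}{2r}=\gamma_r .
\]
Since $2<r<2^*$, we have $\gamma_r\in(0,1)$. Writing $|u|^r=|u|^{(1-\theta)r}|u|^{\theta r}$ and applying H\"older's inequality with exponents $\frac{2}{(1-\theta)r}$ and $\frac{2^*}{\theta r}$ gives
\[
\|u\|_r\le \|u\|_2^{1-\gamma_r}\|u\|_{2^*}^{\gamma_r}.
\]

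\textbf{Step 2 (Sobolev embedding).} Since $\Omega$ is a smooth bounded domain, it is an extension domain. So there is a bounded extension operator $E:H^1(\Omega)\to H^1(\R^N)$ with $\|Eu\|_{H^1(\R^N)}\le C_\Omega\|u\|_{H^1(\Omega)}$. The Sobolev inequality on $\R^N$ (valid for $N\ge3$) then gives
\[
\|u\|_{L^{2^*}(\Omega)}\le \|Eu\|_{L^{2^*}(\R^N)}\le S_N\|\nabla Eu\|_{L^2(\R^N)}\le S_NC_\Omega\|u\|_{H^1(\Omega)}.
\]

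\textbf{Conclusion.} Substituting the bound from Step 2 into the interpolation inequality from Step 1 yields the claim with $C_{N,r,\Omega}=(S_NC_\Omega)^{\gamma_r}$. This constant depends only on $N$, $r$ and $\Omega$.

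The only non-elementary ingredient is the extension operator. This is exactly where the smoothness of $\partial\Omega$ is used, and it is the reason the full $H^1$ norm, rather than the gradient alone, must appear on the right-hand side: constants belong to $H^1(\Omega)$ and have zero gradient. I regard this step as standard (Stein or Calder\'on extension) and would cite it rather than reprove it. Everything else is routine bookkeeping of exponents.
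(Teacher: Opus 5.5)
Your proof is correct, but it takes a different route from the paper, which gives no argument and simply quotes the inequality from Nirenberg's interpolation theory \cite{N-1966}. That theory covers a much broader family of estimates, involving higher-order derivatives and general exponents.

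You instead derive exactly the case needed, by the elementary route. H\"older interpolation between $L^2$ and $L^{2^*}$ produces the exponent $\gamma_r=N(\frac{1}{2}-\frac{1}{r})$ directly. The embedding $H^1(\Omega)\hookrightarrow L^{2^*}(\Omega)$, obtained through an extension operator and the Sobolev inequality on $\mathbb{R}^N$, then finishes the argument.

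What your route buys:
\begin{itemize}
\item a self-contained proof;
\item an explicit constant $(S_NC_\Omega)^{\gamma_r}$;
\item a clear reason why the full $H^1$ norm, not $\|\nabla u\|_2$, must appear on the right-hand side (constants lie in $H^1(\Omega)$).
\end{itemize}

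What the citation buys is generality the paper never uses. Only $N=3$, $r=4$, $\gamma_4=\frac{3}{4}$ enters, in Lemma \ref{biaoda}. There your bound $\|u\|_4^4\le C\|u\|_2\|u\|_{H^1}^3$ is exactly what is used in \eqref{2-20-2}.
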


We introduce the parameterized functional $J_{\rho}: H^1(\Omega)\times H^1(\Omega)\to \mathbb{R}$ defined by
\begin{align}\label{lajgrad}
J_\rho(u,v)&:= \int_{\Omega}\left(|\nabla u|^2+ |\nabla v|^2 \right)dx+\int_{\Omega}V_1(x)u^2dx+\int_{\Omega}V_2(x)v^2dx\nonumber\\
&\quad- \rho\int_{\Omega} u^2v^2 dx,
\end{align}
where $\rho \in [\frac{1}{2},1]$.
For $m>0$, we also define $\mathcal{B}_m$ by 
\begin{equation*}
    \mathcal{B}_m:=\bigl\{ (u,v)\in \mathcal{S}_a : \int_{\Omega}\bigl( |\nabla u|^2+|\nabla v|^2\bigr)\,dx < m \bigr\}.
\end{equation*}
We have the following lemma.
\begin{lem}\label{biaoda}
For any $m>0$, there exists $a^*>0$ such that for any $a\in (0, a^*)$, we have
	\begin{eqnarray}\label{loc}
	\sup_{(u,v)\in \mathcal{B}_{\frac{m}{2}}} J_{\frac{1}{2}}(u,v) <m< 
	\inf_{(u,v)\in \partial\mathcal{B}_{2m}} J_\rho(u,v), ~~\forall \rho\in \left[\frac{1}{2}, 1\right].
\end{eqnarray}
\end{lem}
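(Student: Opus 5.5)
The plan is to estimate each part of $J_\rho$ on $\mathcal{S}_a$ using the constraint $\|u\|_2^2+\|v\|_2^2=a$ together with Lemma \ref{gnineq} for $N=3$, $r=4$. In that case $\gamma_4=\frac34$, so
\[
\|u\|_4^4\le C\|u\|_2\,\|u\|_{H^1}^3 .
\]
Set $M_i:=\max_{\overline\Omega}|V_i|$ and $M:=\max\{M_1,M_2\}$, which are finite because each $V_i$ is continuous on the compact set $\overline\Omega$. The potential terms then satisfy $\bigl|\int_\Omega V_1u^2+\int_\Omega V_2v^2\bigr|\le Ma$. For the coupling term, Cauchy--Schwarz gives $\int_\Omega u^2v^2\le \|u\|_4^2\|v\|_4^2\le \frac12(\|u\|_4^4+\|v\|_4^4)$. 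Writing $T:=\int_\Omega(|\nabla u|^2+|\nabla v|^2)$, we have $\|u\|_{H^1}^2\le T+a$ and $\|u\|_2\le a^{1/2}$, and likewise for $v$. Hence
\[
0\le \int_\Omega u^2v^2\,dx\le C a^{1/2}(T+a)^{3/2},
\]
with $C=C_{3,4,\Omega}^4$ independent of $a$ and $T$.

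For the left inequality, take $(u,v)\in\mathcal{B}_{m/2}$. Since $\rho\int_\Omega u^2v^2\ge0$, we can simply drop the coupling term:
\[
J_{\frac12}(u,v)\le T+Ma<\tfrac m2+Ma .
\]
Therefore the supremum is at most $\frac m2+Ma$, and this is strictly less than $m$ once $a<\frac{m}{4M}$ (no restriction is needed if $M=0$). One could even keep a margin, for instance $\sup\le \frac34 m$.

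For the right inequality, take $(u,v)\in\partial\mathcal{B}_{2m}$, so that $T=2m$. Using $\rho\le1$,
\[
J_\rho(u,v)\ge 2m-Ma-Ca^{1/2}(2m+a)^{3/2}.
\]
Once $a\le 1$, the last term is at most $Ca^{1/2}(2m+1)^{3/2}$, which tends to $0$ as $a\to0^+$. So we may choose $a^*\le\min\{1,\frac{m}{4M}\}$ small enough that $Ma+Ca^{1/2}(2m+1)^{3/2}<m$ for all $a<a^*$. This yields $\inf_{\partial\mathcal{B}_{2m}}J_\rho>m$ for every $\rho\in[\frac12,1]$.

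The bound is uniform in $\rho$ because only $\rho\le1$ is used on the boundary and only the sign of the coupling term is used inside. The one point needing care is a consistent reading of $\partial\mathcal{B}_{2m}$ as the set of constraint points with $T=2m$. Apart from that, the only substantive input is the Gagliardo--Nirenberg estimate in its inhomogeneous form with the full $H^1$ norm, since on a bounded domain with Neumann conditions constants are not controlled by the gradient alone. This is why the constraint is used to absorb the $L^2$ part into $T+a$.
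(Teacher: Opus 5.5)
Your proposal is correct and follows the paper's proof essentially step for step. You bound the coupling term by $C a^{1/2}(T+a)^{3/2}$ via Gagliardo--Nirenberg with $r=4$ and $\int_\Omega u^2v^2\le\frac12(\|u\|_4^4+\|v\|_4^4)$, drop the nonnegative coupling term on $\mathcal{B}_{m/2}$, use $\rho\le 1$ on $\partial\mathcal{B}_{2m}$, and then take $a^*$ small. Your factor $(2m+a)^{3/2}$ is slightly sharper than the paper's $(4m+a)^{3/2}$, which is harmless either way.
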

\begin{proof}
By  Lemma \ref{gnineq}, for any $(u,v) \in  \partial \mathcal{B}_{2m}$ we get
\begin{align}
	\int_{\Omega} u^2v^2 \,dx 
	& \leq \frac{1}{2}\left(\|u\|_4^4 + \|v\|_4^4\right) \notag \\
	& \leq \frac{1}{2}C^4_{3,4,\Omega}\left(\|u\|_2\|u\|_{H^1}^3 + \|v\|_2\|v\|_{H^1}^3\right) \notag \\
	& \leq C^4_{3,4,\Omega}a^{\frac{1}{2}}(4m+a)^\frac{3}{2}. \label{2-20-2}
\end{align}
By \eqref{2-20-2}, for any $(u,v)\in \partial \mathcal{B}_{2m}$ and $\rho\in [\frac{1}{2}, 1]$, 
we deduce
\begin{align*}
	J_\rho(u,v)
	& = \int_{\Omega}\left(|\nabla u|^2+ |\nabla v|^2\right)dx+\int_{\Omega}\left(V_1(x)u^2+V_2(x)v^2\right)dx\\
	 &\quad - \rho\int_{\Omega} u^2v^2dx \\
	& \ge \left(\|\nabla u\|_2^2 + \|\nabla v\|_2\right)- \left(\max_{i=1,2}\|V_i\|_{L^\infty}\right) \int_{\Omega}\left(u^2+v^2\right)dx \\
  &\quad- \int_{\Omega} u^2v^2dx \\
	& \ge 2m - \left(\max_{i=1,2}\|V_i\|_{L^\infty}\right)a -C^4_{3,4,\Omega}a^{\frac{1}{2}}(4 m+a)^{\frac{3}{2}}.
\end{align*}
On the other hand, for any $(u,v)\in \mathcal{B}_{\frac{m}{2}}$, we obtain
\begin{align*}
	J_{\frac{1}{2}}(u,v)
	& = \int_{\Omega}\left(|\nabla u|^2+ |\nabla v|^2\right)dx + \int_{\Omega}\left(V_1(x)u^2+V_2(x)v^2\right)dx\\
	 &\quad - \frac{1}{2}\int_{\Omega}u^2v^2dx \\
	& \le \|\nabla u\|_2^2 + \|\nabla v\|_2 + \left(\max_{i=1,2}\|V_i\|_{L^\infty}\right) \int_{\Omega}\left(u^2+v^2\right)dx \\
	& \le \frac{m}{2} + \left(\max_{i=1,2}\|V_i\|_{L^\infty}\right)a.
\end{align*}
Now we choose small $a^*>0$ such that 
\begin{align*}
	&\frac{m}{2} + \left(\max_{i=1,2}\|V_i\|_{L^\infty}\right)a^*< m \\
	&< 2m - \left(\max_{i=1,2}\|V_i\|_{L^\infty}\right)a^* -C^4_{3,4,\Omega}(a^*)^{\frac{1}{2}}(4 m+a^*)^{\frac{3}{2}}.
\end{align*}
Then, for any $a\in (0,a^*)$, inequalities (\ref{loc}) hold. This completes the proof.
\end{proof}

For any $m>0$ and $a\in (0,a^*)$, we consider the minimization problem
\begin{equation*}
	m'_\rho:=\inf_{(u,v)\in \mathcal{B}_{2m}}J_\rho(u,v).
\end{equation*}
By Lemma \ref{biaoda} and standard arguments, we can establish the existence of a local minimizer of $J_{\rho}$ within the set $\mathcal{B}_{2m}$.
\begin{lem}\label{existgrad}
Let $m>0$ and $a\in (0, a^*)$. 
Then $m'_\rho$ is achieved by some $(\tilde{u} _\rho, \tilde{v} _\rho)\in \mathcal{B}_{2m}$.
\end{lem}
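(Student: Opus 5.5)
We argue by the direct method on the bounded set $\mathcal{B}_{2m}$. First, $m'_\rho$ is finite. On $\mathcal{B}_{2m}$ we have $\|u\|_{H^1}^2+\|v\|_{H^1}^2\le 2m+a$, so by the estimate \eqref{2-20-2} (which only uses this bound) the quartic term is bounded. Hence $J_\rho \ge -\max_i\|V_i\|_{L^\infty}a - C$ on $\mathcal{B}_{2m}$. Moreover, $\mathcal{B}_{m/2}\subset\mathcal{B}_{2m}$ is nonempty (for instance constant pairs with $|\Omega|(c_1^2+c_2^2)=a$ have zero gradient). Since $J_\rho\le J_{\frac12}$ pointwise, Lemma \ref{biaoda} gives $m'_\rho<m$.

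Take a minimizing sequence $(u_n,v_n)\in\mathcal{B}_{2m}$. It is bounded in $H^1(\Omega)\times H^1(\Omega)$, so up to a subsequence $(u_n,v_n)\rightharpoonup(\tilde u_\rho,\tilde v_\rho)$ weakly in $H^1$. Since $\Omega\subset\mathbb{R}^3$ is a smooth bounded domain, Rellich--Kondrachov applies, and the convergence is strong in $L^2$ and in $L^4$ (because $4<6=2^*$). Consequently:
\begin{itemize}
\item the mass constraint passes to the limit, so $(\tilde u_\rho,\tilde v_\rho)\in\mathcal{S}_a$;
\item $\int V_1u_n^2+\int V_2v_n^2$ converges, since $V_i$ are bounded;
\item $\int u_n^2v_n^2\to\int\tilde u_\rho^2\tilde v_\rho^2$, because $u_n^2\to \tilde u_\rho^2$ and $v_n^2\to\tilde v_\rho^2$ in $L^2$.
\end{itemize}
By weak lower semicontinuity of the Dirichlet norm,
\[
\|\nabla\tilde u_\rho\|_2^2+\|\nabla\tilde v_\rho\|_2^2\le\liminf_{n}\bigl(\|\nabla u_n\|_2^2+\|\nabla v_n\|_2^2\bigr)\le 2m,
\]
and $J_\rho(\tilde u_\rho,\tilde v_\rho)\le m'_\rho$.

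The main obstacle is that the limit might have gradient norm exactly $2m$, i.e. lie on $\partial\mathcal{B}_{2m}$ and outside the open set $\mathcal{B}_{2m}$. We exclude this with Lemma \ref{biaoda}. If the limit were on $\partial\mathcal{B}_{2m}$, then
\[
J_\rho(\tilde u_\rho,\tilde v_\rho)\ge\inf_{\partial\mathcal{B}_{2m}}J_\rho>m>m'_\rho,
\]
which contradicts $J_\rho(\tilde u_\rho,\tilde v_\rho)\le m'_\rho$. Hence $(\tilde u_\rho,\tilde v_\rho)\in\mathcal{B}_{2m}$. Then $J_\rho(\tilde u_\rho,\tilde v_\rho)\ge m'_\rho$ by the definition of the infimum, so equality holds and the infimum is achieved.

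As a remark, this argument also gives strong $H^1$ convergence of the minimizing sequence. Since the limit lies in the open set $\mathcal{B}_{2m}$, it is a local minimizer of $J_\rho$ on $\mathcal{S}_a$, which is what is needed later to obtain mountain pass geometry.
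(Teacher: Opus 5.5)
Your proposal is correct and is the standard direct-method argument the paper only alludes to (``By Lemma \ref{biaoda} and standard arguments''). You use weak compactness in $H^1$ plus Rellich--Kondrachov on the bounded set $\mathcal{B}_{2m}$, and you keep the weak limit off the boundary via $m'_\rho\le\sup_{\mathcal{B}_{m/2}}J_{\frac12}<m<\inf_{\partial\mathcal{B}_{2m}}J_\rho$, which follows from Lemma \ref{biaoda} together with $J_\rho\le J_{\frac12}$ for $\rho\ge\frac12$.
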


We now prove that $J_{\rho}$ possesses a mountain pass geometry uniformly with respect to $\rho\in[\frac{1}{2},1]$.
\begin{lem}\label{uni mountain}
Let $m>0$ and $a\in (0, a^*)$.
Then there exist $(w_1, w_2) \in \mathcal{B}_{2m}$ and $ (w^*_1, w^*_2) \not\in \mathcal{B}_{2m}$ such that
\begin{equation*}
	c_\rho:= \inf_{\gamma\in \Gamma}\max_{t\in[0,1]}J_\rho(\gamma(t))
	>m>
	\max\{J_\rho(w_1, w_2), J_\rho(w^*_1, w^*_2) \} ,~~ \forall \rho\in\left[\frac{1}{2}, 1\right],
\end{equation*}
where
	\begin{equation}\label{Gammagrad}
		\Gamma:=\{\gamma\in C([0,1], \mathcal{S}_a) : \gamma(0)=(w_1, w_2), ~\gamma(1)=(w^*_1, w^*_2) \}.
	\end{equation}
\end{lem}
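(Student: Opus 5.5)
We will take $(w_1,w_2)$ inside $\mathcal{B}_{m/2}$ and obtain $(w_1^*,w_2^*)$ by concentrating mass, using that $J_\rho\le J_{1/2}$ for $\rho\ge\frac12$.

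\emph{The endpoint inside.} Take $(w_1,w_2)=(\sqrt{a/(2|\Omega|)},\sqrt{a/(2|\Omega|)})$. It lies on $\mathcal{S}_a$ and has zero gradient, so $(w_1,w_2)\in\mathcal{B}_{m/2}\subset\mathcal{B}_{2m}$. Since $\int u^2v^2\ge0$, the map $\rho\mapsto J_\rho(u,v)$ is nonincreasing, so $J_\rho\le J_{\frac12}$ for every $\rho\in[\frac12,1]$. By Lemma \ref{biaoda}, $J_\rho(w_1,w_2)\le J_{\frac12}(w_1,w_2)<m$.

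\emph{The endpoint outside.} Fix $x_0\in\Omega$ and a nonnegative $\varphi\in C_c^\infty(B_1(0))$ with $\|\varphi\|_2^2=a/2$. For $t\ge1$ set
\[
\varphi_t(x)=t^{3/2}\varphi(t(x-x_0)),
\]
which is supported in $\Omega$ once $t$ is large. Then $\|\varphi_t\|_2^2=a/2$ and $\|\nabla\varphi_t\|_2^2=t^2\|\nabla\varphi\|_2^2$, while $\int\varphi_t^4=t^3\|\varphi\|_4^4$. Hence
\[
J_{\frac12}(\varphi_t,\varphi_t)\le 2t^2\|\nabla\varphi\|_2^2+\max_i\|V_i\|_{L^\infty}\,a-\tfrac12 t^3\|\varphi\|_4^4\to-\infty .
\]
Choose $t$ large enough that $2t^2\|\nabla\varphi\|_2^2\ge 2m$, so the pair lies outside $\mathcal{B}_{2m}$, and also $J_{\frac12}(\varphi_t,\varphi_t)<m$. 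Put $(w_1^*,w_2^*)=(\varphi_t,\varphi_t)$. By monotonicity in $\rho$, $J_\rho(w_1^*,w_2^*)<m$ uniformly in $\rho\in[\frac12,1]$.

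\emph{Lower bound on $c_\rho$.} Let $\gamma\in\Gamma$. The map $t\mapsto\|\nabla\gamma(t)\|_2^2$ is continuous on $[0,1]$, since $\gamma$ is continuous into $H^1\times H^1$. It is $0$ at $t=0$ and at least $2m$ at $t=1$. If the value at $t=1$ equals $2m$, take $t_\gamma=1$. Otherwise the intermediate value theorem gives some $t_\gamma$ with $\|\nabla\gamma(t_\gamma)\|_2^2=2m$. In either case $\gamma(t_\gamma)\in\partial\mathcal{B}_{2m}$.

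We now read off the right-hand inequality of Lemma \ref{biaoda}, which shows the infimum over $\partial\mathcal{B}_{2m}$ exceeds $m$ by a margin independent of $\rho$. From its proof,
\[
\max_{t}J_\rho(\gamma(t))\ge \inf_{\partial\mathcal{B}_{2m}}J_\rho\ge 2m-\max_i\|V_i\|_{L^\infty}a-C^4_{3,4,\Omega}a^{1/2}(4m+a)^{3/2}=:\delta_a>m .
\]
The constant $\delta_a$ does not depend on $\gamma$ or on $\rho$. Taking the infimum over $\gamma\in\Gamma$ gives $c_\rho\ge\delta_a>m$.

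\emph{Main obstacle.} The one delicate point is the strictness of $c_\rho>m$ after passing to the infimum over paths. It is handled by using the explicit bound $\delta_a$ above, which is independent of the path, rather than the bare strict inequality from Lemma \ref{biaoda}. Everything else is routine scaling.
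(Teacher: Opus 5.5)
Your proof is correct and follows the paper's argument: the same concentrating pair $(\varphi_t,\varphi_t)$ serves as the far endpoint, with $\rho\ge\frac{1}{2}$ giving uniformity in $\rho$; every $\gamma\in\Gamma$ meets $\partial\mathcal{B}_{2m}$; and Lemma \ref{biaoda} gives the lower bound. The strictness you flag as delicate is automatic, since Lemma \ref{biaoda} already gives $\inf_{\partial\mathcal{B}_{2m}}J_\rho>m$, hence $c_\rho\ge\inf_{\partial\mathcal{B}_{2m}}J_\rho>m$.

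The only real difference is the inner endpoint. The paper takes the local minimizer $(\tilde u_{\frac{1}{2}},\tilde v_{\frac{1}{2}})$ from Lemma \ref{existgrad}, asserting without justification that it lies in $\mathcal{B}_{\frac{m}{2}}$. Your constant pair makes $(w_1,w_2)\in\mathcal{B}_{\frac{m}{2}}$ and $J_\rho(w_1,w_2)<m$ immediate, with no need for the existence of a local minimizer.
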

\begin{proof}
Denote by $B_r(x)$ the ball in $\mathbb{R}^3$ with radius $r$ and centered at $x$.
Let $\varphi \in C^\infty_0(B_1(0))$ be such that $\varphi>0$ within $B_1(0)$ and $\int_{B_1(0)}|\varphi|^2 dx=1$.
For $t>0$, $x_0\in \Omega$, define the functions
\begin{equation*}
	\varphi_t(x):=\left(\frac{a}{2}\right)^{\frac{1}{2}} t^\frac{3}{2} \varphi (t(x-x_0)),~~~
	x\in \Omega.
\end{equation*}
It can be verified that for sufficiently large $t$, the supports of $\varphi_t$ is contained within $ B_\frac{1}{t}(x_0)\subset \Omega$.
Moreover, we have $(\varphi_t, \varphi_t)\in \mathcal{S}_a$.
For any $\rho\in [\frac{1}{2},1]$, by taking $t$ sufficiently large, we obtain
\begin{eqnarray*}
\begin{aligned}
	J_\rho(\varphi_t,\varphi_t) 
	& = \int_{\Omega}\left(|\nabla \varphi_t(x)|^2+|\nabla \varphi_t(x)|^2\right)dx\\
	 &\quad + \int_{\Omega}\left(V_1(x) |\varphi_t(x)|^2+V_2(x)|\varphi_t(x)|^2\right)dx\\
	&\quad  - \rho\int_{\Omega} \varphi_t(x)^4 dx \\
	& \leq at^2\int_{B_1(0)}|\nabla \varphi|^2 + \frac{a}{2}\left(\|V_1\|_{L^\infty}+\|V_2\|_{L^\infty}\right) \int_{B_1(0)}|\varphi|^2dx\\
	 &\quad - \frac{a^2}{8}t^3\int_{B_1(0)}|\varphi|^4 dx<0.
\end{aligned}
\end{eqnarray*}
Hence, there exists $t_0>0$ large enough such that
\begin{equation*}
	J_\rho(\varphi_{t_0}, \varphi_{t_0})< m,~~\forall \rho\in \left[\frac{1}{2}, 1\right] 
	~~\text{ and }~~ (\varphi_{t_0}, \varphi_{t_0})\not\in \mathcal{B}_{2m}.
\end{equation*}
Let $(w_1,w_2)=(\tilde{u}_{\frac{1}{2}},\tilde{v}_{\frac{1}{2}})\in \mathcal{B}_{\frac{m}{2}}$ 
and $(w^*_1,w^*_2)=(\varphi_{t_0}, \bar{\varphi}_{t_0})$.
By the continuity of the functional $J_{\rho}$ along any path
 $\gamma \in \Gamma$, there exists $t_\gamma \in [0,1]$ such that $ \gamma(t_\gamma)\in \partial \mathcal{B}_{2m}$.
Consequently, by Lemma \ref{biaoda}, it follows that
\begin{align*}
	\max_{t\in[0,1]}J_\rho(\gamma(t))&\geq J_\rho(\gamma(t_\gamma))\geq \inf_{(u,v)\in \partial \mathcal{B}_{2m}}J_\rho(u,v)
	> m \\
	&> \max\{J_\rho(w_1,w_2), J_\rho(w_1^*,w_2^*)\},~~\forall \rho \in \left[\frac{1}{2}, 1\right].	
\end{align*}
The proof is complete.
\end{proof}
\begin{rem}
	We note that from the choice of $(\tilde{u}_{\frac{1}{2}},\tilde{v}_{\frac{1}{2}})$, $(\varphi_{t_0}, \bar{\varphi}_{t_0})$ and $\Gamma$, we can observe that these functions remain independent of the parameter $\rho$.
\end{rem}

\begin{defn}\label{Morseindexgrad}
For a domain $D$ and $(u,v)\in H_{loc}^1(D)\times H_{loc}^1(D)$ satisfying the following system
	\begin{eqnarray*}
\begin{cases}
-\Delta u+V_1(x)u+ \lambda u= \rho uv^2,  & x\in D,\\
-\Delta v+V_2(x)v+ \lambda v= \rho u^2v,  & x\in D,\\
\frac{\partial u}{\partial \nu}=\frac{\partial v}{\partial \nu}=0, &x\in \partial D,\\
\end{cases}
\end{eqnarray*}
where $\lambda \in \mathbb{R}$ and $\rho\in[\frac{1}{2},1]$, we define the quadratic form
\begin{equation*}
\begin{split}
		Q_{u,v}(\phi,\psi; D):= &\int_{D}\left(|\nabla \phi |^2+|\nabla \psi|^2 \right)dx+\int_{D}\left(V_1(x)|\phi |^2+V_2(x)|\psi|^2 \right)dx\\
    & + \lambda\int_{D}\left(|\phi |^2+|\psi|^2\right) dx \\
		&-\rho\int_{D} \left(v^2\phi^2+ u^2\psi^2+ 4 uv\phi\psi \right)dx.
\end{split}
\end{equation*}
The Morse index of $(u,v)$, denote by $m(u,v)$, is the maximum dimension of a subspace $W\subset H_{loc}^1(D)\times H_{loc}^1(D)$ such that $Q_{u,v}(\phi,\psi;D)<0$ for all $(\phi,\psi) \in W\backslash \{0\}$.
\end{defn}

To present the abstract theorem, we first recall a general framework, as outlined in \cite{BCJS2024} (see also \cite{Berestycki-1983,BCJS2023,CJS2024}).
Let $(E, \langle\cdot,\cdot\rangle)$ and $(H,(\cdot, \cdot))$
be two infinite dimensional Hilbert spaces. We assume there exist continuous injections:
$ E\hookrightarrow H \hookrightarrow E'$
such that the injection $E\hookrightarrow H$ has a norm of at most 1. For simplicity, we identify $E$ with its image in $H$. 
For each $u\in E$, we define the norms $\|u\|^2=<u,u>$ and $|u|^2=(u,u)$.
Given $a\in (0, +\infty)$, we define the set $S(a)= \{u\in E, |u|^2= a\}$.
 It is evident that $S(a)$ is a submanifold of $E$.
 We denote by $\|\cdot\|_*$ and $\|\cdot\|_{**}$ the operator norms on $\mathcal{L}(E, R)$ and $\mathcal{L}(E, \mathcal{L}(E, \mathbb{R}))$, respectively.
\begin{defn} \cite{BCJS2024}
	Let $\phi: E\to \mathbb{R}$ be a $C^2$-functional on $E$ and $\alpha\in (0,1]$. We say that $\phi'$ and $\phi''$ are $\alpha$-H\"{o}lder continuous on bounded sets if for any $R>0$ one can find $M=M(R)>0$ such that for any $u_1, u_2\in B(0, R)$:
	\begin{equation*}
		\|\phi'(u_1)-\phi'(u_2)\|_* \leq M\|u_1-u_2\|^\alpha,~~
		\|\phi''(u_1)-\phi''(u_2)\|_{**} \leq M\|u_1-u_2\|^\alpha.
	\end{equation*}
\end{defn}
\begin{defn} \cite{BCJS2024}
	Let $\phi$ be a $C^2$-functional on $E$, for any $u\in E$ define the continuous bilinear map:
	\[D^2\phi(u)=\phi''(u)-\frac{\phi'(u)\cdot u}{|u|^2}(\cdot,	\cdot). \]
\end{defn}
\begin{defn}\cite{BCJS2024}
For any $u\in S(a)$ and $\theta>0$, we define a approximate Morse index by
\begin{align*}
\tilde{m}_\theta(u)=\sup \{dim ~L | &L~ {\rm is~ a~ subspace ~of~ } T_uS(a)~ {\rm such~ that~} \\
&D^2\phi(u)[\varphi,\varphi]<-\theta\|\varphi\|^2, ~\forall \varphi \in L \backslash \{0\}\}.
\end{align*}
\end{defn}
If $u$ is a critical point of the constraint functional $\phi|_{S(a)}$, and if $\theta=0$, then $\tilde{m}_\theta(u)$ is the Morse index of $u$ as a constrained critical point.

\begin{thm}\label{abstract theorem}
	(\cite[Theorem 1.5]{BCJS2024}).
	Let $I\subset(0, +\infty)$ be an interval and consider a family of $C^2$ functionals $\Phi_\rho : E \to \mathbb{R}$ of the form:
	\begin{equation*}
		\Phi_\rho(u)= A(u)-\rho B(u), ~~\rho \in I,
	\end{equation*}
where $B(u)\geq 0$ for every $u\in E$, and
\begin{equation*}
	{\rm either~~ }A(u)\to +\infty ~~{\rm or}~~ B(u)\to +\infty
	{\rm ~~as~~} u\in E ~~{\rm and ~~}\|u\|\to +\infty.
\end{equation*}	
Suppose moreover that $\Phi'_\rho$ and $\Phi''_\rho$ are $\alpha$-H\"{o}lder continuous on bounded sets for some $\alpha \in (0,1]$.
Finally, suppose that there exist $w_1, w_2\in S(a)$ (independent of $\rho$) such that, setting
\begin{equation*}
	\Gamma=\{\gamma\in C([0,1], S(a)): \gamma(0)=w_1, \gamma(1)=w_2\},
\end{equation*}
we have
\begin{equation*}
	c_\rho=\inf_{\gamma\in \Gamma}\max_{t\in [0,1]}\Phi_\rho(\gamma(t))
	>\max\{\Phi_\rho(w_1), \Phi_\rho(w_2)\},~~\forall \rho \in I.
\end{equation*}
Then, for almost every $\rho\in I$, there exist	sequence $\{u_n\}\subset S(a)$ and $\zeta_n\to 0^+$ such that, as $n \to \infty$,
\begin{itemize}
	\item[(i)]  $\Phi_\rho(u_n)\to c_\rho$;
	\item[(ii)] $\|\Phi'_\rho|_{S(a)}(u_n)\|\to 0$;
	\item[(iii)]  $\{u_n\}$ is bounded in $E$;
	\item[(iv)] $\tilde{m}_{\zeta_n}(u)\leq 1$.
\end{itemize}
\end{thm}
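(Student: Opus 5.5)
The plan is to combine Struwe's monotonicity trick, in the form developed for constrained problems, with a second-order deformation argument that uses the Hölder continuity of $\Phi''_\rho$ to control the approximate Morse index. First, since $B\ge 0$, the map $\rho\mapsto c_\rho$ is nonincreasing on $I$. It is therefore differentiable at almost every $\rho\in I$, and I fix such a $\rho$. I take $\rho_n\uparrow\rho$ and paths $\gamma_n\in\Gamma$ with $\max_t\Phi_{\rho_n}(\gamma_n(t))\le c_{\rho_n}+(\rho-\rho_n)$.

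\textbf{Bounded region.} For any $t$ with $\Phi_\rho(\gamma_n(t))\ge c_\rho-(\rho-\rho_n)$, I use
\[
B(\gamma_n(t))=\frac{\Phi_{\rho_n}(\gamma_n(t))-\Phi_\rho(\gamma_n(t))}{\rho-\rho_n}\le \frac{c_{\rho_n}-c_\rho}{\rho-\rho_n}+2 .
\]
The right-hand side is bounded by $-c'_\rho+3$ for $n$ large. Then $A=\Phi_\rho+\rho B$ is also bounded at such points. The coercivity hypothesis (either $A\to+\infty$ or $B\to+\infty$ as $\|u\|\to\infty$) then gives a uniform bound $\|\gamma_n(t)\|\le K$, with $K=K(\rho)$ independent of $n$. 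So the relevant part of each near-optimal path lies in a fixed ball $B(0,K)$, on which $\Phi'_\rho$ and $\Phi''_\rho$ satisfy the Hölder estimates with a fixed constant $M$.

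\textbf{Palais--Smale property.} On this ball, a standard deformation lemma for the constrained functional on $S(a)$ shows the following. If $\|\Phi'_\rho|_{S(a)}\|$ were bounded below near the top of $\gamma_n$ inside $B(0,K)$, one could push $\gamma_n$ down below $c_\rho$ while keeping the endpoints fixed, using $c_\rho>\max\{\Phi_\rho(w_1),\Phi_\rho(w_2)\}$. This contradicts the definition of $c_\rho$. The deformation uses a pseudo-gradient flow composed with the retraction $u\mapsto \sqrt a\,u/|u|$ onto $S(a)$. This yields (i)--(iii).

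\textbf{Morse index (main obstacle).} To obtain (iv), I argue by contradiction. Suppose that for some $\zeta>0$ every almost-critical point near level $c_\rho$ in $B(0,K)$ has $\tilde m_\zeta(u)\ge 2$. Then at each such $u$ there is a $2$-dimensional subspace $L_u\subset T_uS(a)$ on which $D^2\Phi_\rho(u)\le-\zeta\|\cdot\|^2$. By the Hölder continuity of $\Phi'_\rho$ and $\Phi''_\rho$, this negativity persists, with constant $\zeta/2$, on a uniform neighbourhood of radius $\delta(\zeta,M)$, once the subspace is transported to nearby tangent spaces. In the quadratic expansion $\Phi_\rho(\exp_u(\varphi))\approx\Phi_\rho(u)+\Phi'_\rho(u)\varphi+\tfrac12D^2\Phi_\rho(u)[\varphi,\varphi]$, the first-order term is small, so moving a distance of order $\delta$ in $L_u$ lowers the energy by an amount of order $\zeta\delta^2$. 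A one-parameter path cannot be blocked by a $2$-dimensional descending direction field. Locally, one chooses for each point of the path a displacement in the unit circle of $L_u$ depending continuously on $t$, which exists because the map from an interval into the circle is unobstructed. Gluing these displacements with a partition of unity along the compact path lowers its maximum strictly below $c_\rho$, which is again a contradiction.

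Making this gluing rigorous is the step I expect to be hardest. One needs uniform radii from the Hölder constants, continuity of the choice of negative directions along $t$, and compatibility with the constraint retraction. I would follow the finite-dimensional reduction used by Fang--Ghoussoub-type arguments, working on a small tube around $\gamma_n$ in $B(0,K)$. Finally, a diagonal choice $\zeta_n\to0^+$ combines the first-order and second-order information into a single sequence $\{u_n\}$ satisfying (i)--(iv).
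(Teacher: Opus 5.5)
Your first two steps are correct, and the overall route matches the source. The paper only quotes this result from \cite{BCJS2024}. There it is obtained from your monotonicity-trick reduction (a.e.\ differentiability of $\rho\mapsto c_\rho$, and almost-optimal paths for $\Phi_{\rho_n}$ whose high part for $\Phi_\rho$ stays in a fixed ball), combined with a constrained second-order deformation in the spirit of Fang and Ghoussoub.

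The gap is in the step you flag as hardest, and it is not only a matter of rigour: the mechanism you describe is the wrong one. You claim that moving a distance of order $\delta$ in $L_u$ lowers the energy by order $\zeta\delta^2$, so it suffices to pick a displacement in the unit circle of $L_u$ depending continuously on $t$. The energy claim is true at $u$, and at any point with small constrained gradient. It fails at the other points $\gamma_n(t)$ of the path near $u$. Write $y(t)$ for the $L_u$-component of $\gamma_n(t)-u$. The first-order term is $\Phi_\rho'(\gamma_n(t))\varphi\approx D^2\Phi_\rho(u)[y(t),\varphi]$, which is of the same order $\zeta\|y(t)\|\,\|\varphi\|$ as the gain and is positive when $\varphi$ points back towards $u$.

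Continuity is also no obstruction at all, since a constant unit vector is a continuous choice. Read literally, your argument works verbatim when $\dim L_u=1$ and would give $\tilde m_{\zeta_n}(u_n)\le 0$, which is false at nondegenerate mountain-pass points. A model: $\Phi(x)=x_2^2-x_1^2$ on $\mathbb{R}^2$, with the path along the $x_1$-axis from $(-1,0)$ to $(1,0)$. Pushing it along $e_1$ only slides the crossing point, and the deformed path still meets $\{x_1=0\}$, where $\Phi\ge 0=c$.

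The missing idea is where the dimension count actually enters. Near a point $u$ with $\tilde m_\zeta(u)\ge 2$, the steps are as follows.
\begin{enumerate}
\item Take for $L_u$ a negative spectral subspace of $D^2\Phi_\rho(u)$ on $T_uS(a)$. It is invariant, so there is no coupling with its complement, and it has dimension at least two.
\item Perturb the path slightly so that its projection $t\mapsto y(t)$ misses a small disc around $0$ in $L_u$. This is always possible for a one-dimensional compact set in a plane, and impossible in general on a line by the intermediate value theorem.
\item Push each point \emph{radially outward} in $L_u$, then retract back onto $S(a)$.
\end{enumerate}
Along $w=y/\|y\|$ the first-order term is at most $\|\Phi_\rho'|_{S(a)}(u)\|-\zeta\|y\|$ up to Hölder errors. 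So the energy genuinely drops, and the path is removed from a neighbourhood of $u$.

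This local deformation lemma on $S(a)$ is what you need to prove. You then glue it with the pseudo-gradient deformation, keeping any energy increase in the cutoff and perturbation regions below the gain. Once that is done, your diagonal choice $\zeta_n\to 0^+$ completes the proof.
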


We have the following existence result.
\begin{prop}\label{mp critical pt}
Assume $a\in (0, a^*)$. Then, for almost every $\rho\in [\frac{1}{2}, 1]$, there exists a critical point $(u_\rho,v_\rho)\in \mathcal{S}_a$ of $J_\rho|_{\mathcal{S}_a}$ at the mountain pass level $c_{\rho}$, which satisfies 
\begin{eqnarray}\label{approximategrad}
\begin{cases}
-\Delta u_\rho+V_1(x)u_{\rho}+ \lambda_\rho u_\rho= \rho u_\rho v_\rho^2,  & x\in\Omega,\\
-\Delta v_\rho+V_2(x)v_{\rho}+ \lambda_\rho v_\rho= \rho u_\rho^2v_\rho,  & x\in\Omega,\\
\frac{\partial u_\rho}{\partial \nu}=\frac{\partial v_\rho}{\partial \nu}=0,  \, &x\in\partial \Omega,\\
\int_{\Omega} \left(|u_\rho|^2+ |v_\rho|^2\right) dx =a
\end{cases}
\end{eqnarray}
for some $\lambda_\rho \in \mathbb{R}$.
Moreover, $m(u_\rho,v_\rho)\leq 2$.
\end{prop}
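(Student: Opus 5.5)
We apply Theorem \ref{abstract theorem} with $E=H^1(\Omega)\times H^1(\Omega)$ and $H=L^2(\Omega)\times L^2(\Omega)$. Since the potentials may be negative, we equip $E$ with the equivalent norm
\[
\|(u,v)\|^2=\int_\Omega\left(|\nabla u|^2+|\nabla v|^2+u^2+v^2\right)dx,
\]
so that $E\hookrightarrow H$ has norm at most $1$. We set $I=[\frac12,1]$, $B(u,v)=\int_\Omega u^2v^2\,dx\ge 0$, and $A=J_\rho+\rho B$; that is, $A$ is the quadratic part of $J$.

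\textbf{Verifying the hypotheses.} On $\mathcal S_a$ we have
\[
A(u,v)\ge \|(u,v)\|^2-\Bigl(1+\max_i\|V_i\|_{L^\infty}\Bigr)\|(u,v)\|_{L^2}^2 .
\]
For the coercivity alternative it is cleaner to replace $A$ by $A+C_0|\cdot|^2$ with $C_0=1+\max_i\|V_i\|_{L^\infty}$. This changes $\Phi_\rho$ only by the constant $C_0a$ on $\mathcal S_a$, and it shifts $D^2\Phi$ and $\lambda$ consistently. With this change $A(u)\ge\|u\|^2\to+\infty$. Smoothness is routine: $B$ is a quartic form, continuous on $E$ by the embedding $H^1(\Omega)\hookrightarrow L^4(\Omega)$ (here $N=3$). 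Hence $B'$ and $B''$ are polynomial in $(u,v)$ and are locally Lipschitz on bounded sets, so the $\alpha$-H\"older condition holds with $\alpha=1$. The mountain pass geometry with endpoints independent of $\rho$ is exactly Lemma \ref{uni mountain}, together with the subsequent remark.

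\textbf{Passing to the limit.} For a.e.\ $\rho\in[\frac12,1]$, Theorem \ref{abstract theorem} gives a bounded Palais--Smale sequence $(u_n,v_n)\subset\mathcal S_a$ at level $c_\rho$ with $\tilde m_{\zeta_n}(u_n,v_n)\le1$ and $\zeta_n\to0^+$. We define the multipliers
\[
\lambda_n:=-\tfrac1a\,J_\rho'(u_n,v_n)\cdot(u_n,v_n)/2 ,
\]
adjusted to the paper's normalization. Boundedness of the sequence makes $\lambda_n$ bounded, so along a subsequence $\lambda_n\to\lambda_\rho$. Passing to a further subsequence, $(u_n,v_n)\rightharpoonup(u_\rho,v_\rho)$ weakly in $E$. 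Since $\Omega$ is bounded and smooth, the embeddings of $H^1(\Omega)$ into $L^2$ and $L^4$ are compact. Hence the convergence is strong in $L^2$, which gives mass $a$, and strong in $L^4$, which lets the quartic terms pass to the limit. Testing $J'_\rho(u_n,v_n)+\lambda_n(u_n,v_n)\to0$ against $(u_n-u_\rho,v_n-v_\rho)$ then yields $\|\nabla(u_n-u_\rho)\|_2+\|\nabla(v_n-v_\rho)\|_2\to0$, so the convergence is strong in $E$. Consequently $(u_\rho,v_\rho)\in\mathcal S_a$ is a critical point at level $c_\rho$ and solves \eqref{approximategrad}. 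The Neumann conditions hold in the weak sense, because the test functions range over all of $H^1(\Omega)$.

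\textbf{Morse index bound.} Suppose the limit had $m(u_\rho,v_\rho)\ge3$ in the sense of Definition \ref{Morseindexgrad}. There would then be a $3$-dimensional space $W\subset E$ on which $Q_{u_\rho,v_\rho}$ is negative definite; after shrinking we may assume $Q<-2\delta\|\cdot\|^2$ on $W$, by compactness of the unit sphere of $W$. Intersecting $W$ with the tangent space $T_{(u_\rho,v_\rho)}\mathcal S_a$, which has codimension $1$, leaves a subspace of dimension at least $2$. On this tangent subspace $Q$ coincides with $D^2\Phi_\rho(u_\rho,v_\rho)$, because at the limit the multiplier term is exactly $\lambda_\rho$. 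By strong convergence in $E$ and continuity of $\Phi''_\rho$, together with the convergence $\lambda_n\to\lambda_\rho$, the form $D^2\Phi_\rho(u_n,v_n)$ is $<-\delta\|\cdot\|^2$ on a $2$-dimensional subspace of $T_{(u_n,v_n)}\mathcal S_a$ for large $n$. To produce this subspace we transport the limit subspace, for instance by projecting it orthogonally in $H$ onto $T_{(u_n,v_n)}\mathcal S_a$; the projection stays close in $E$. Since $\zeta_n<\delta$ eventually, this contradicts $\tilde m_{\zeta_n}(u_n,v_n)\le1$. Therefore $m(u_\rho,v_\rho)\le2$.

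The main obstacle is this transfer step: keeping the dimension and the uniform negativity when passing between moving tangent spaces and the full-space form $Q$. The possible loss of one dimension is exactly what produces the bound $2$ rather than $1$.
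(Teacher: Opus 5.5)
Your proposal is correct and follows the same route as the paper. You apply Theorem~\ref{abstract theorem} with $A$ the quadratic part and $B=\int_\Omega u^2v^2\,dx$, get the limit from bounded multipliers and the compact embeddings $H^1(\Omega)\hookrightarrow L^2,L^4$, and then pass $\tilde m_{\zeta_n}\le 1$ to the limit on the tangent space, losing at most one dimension to the codimension-one constraint to reach $m(u_\rho,v_\rho)\le 2$; your extra steps (the shift $A+C_0|\cdot|^2$ securing coercivity when the $V_i$ may be negative, and the explicit $H$-orthogonal projection onto $T_{(u_n,v_n)}\mathcal S_a$) fill in details the paper delegates to \cite{BCJS2023}.
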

\begin{proof} Fix $a\in (0, a^*)$.
 Then Theorem \ref{abstract theorem} can be applied to the family of functionals 
$J_\rho$, with the choices $E=H^1(\Omega)\times H^1(\Omega)$, $H=L^2(\Omega)\times L^2(\Omega)$, $S(a)=\mathcal{S}_a$, and $\Gamma$ is defined by \eqref{Gammagrad}.
Define 
\begin{align*}
	&A(u,v):= \int_{\Omega}\left(|\nabla u|^2+ |\nabla v|^2 \right)dx+\int_{\Omega}\left(V_1(x)|u|^2+ V_2(x)|v|^2 \right)dx,\\
  &B(u,v):=\int_{\Omega}  u^2v^2 dx.
\end{align*}
By Theorem \ref{abstract theorem}, for almost every $\rho\in [\frac{1}{2}, 1]$, we obtain a bounded Palais-Smale sequence $\{(u_n, v_n)\}\subset H^1(\Omega)\times H^1(\Omega)$ for the constrained functional $J_{\rho}|_{\mathcal{S}_a}$ at the level $c_\rho$.
Moreover, there exists $\zeta_n \to 0^+$, such that $\tilde{m}_{\zeta_n}(u_n,v_n)\leq 1$. 
Since $\|J'_\rho |_{\mathcal{S}_a}(u_n,v_n) \|\to 0$, by \cite{Berestycki-1983}, there exists a sequence $\{\lambda_n\}\subset \mathbb{R}$ such that for all $(\varphi,\psi )\in H^1(\Omega)\times H^1(\Omega)$,
\begin{eqnarray}\label{weaksseqgrad}
&\int_{\Omega} \left(\nabla u_n \nabla \varphi+\nabla v_n \nabla \psi\right) dx+\int_{\Omega}\left(V_1(x) u_n \varphi+V_2(x)v_n\psi\right) dx \nonumber\\
 &+ \lambda_n \int_{\Omega}\left( u_n \varphi+v_n\psi\right) dx -\rho\int_{\Omega}\left(u_nv_n^2\varphi+ u_n^2v_n\psi\right) dx
=o_n(1).
\end{eqnarray}
This implies
\begin{align*}
	&\lambda_n +o_n(1)\\
  &= \frac{1}{a}\bigg [-\int_{\Omega}\left(|\nabla u_n|^2+|\nabla v_n|^2 \right)dx-\int_{\Omega}\left(V_1(x)|u_n|^2+V_2(x)|v_n|^2 \right)dx\\
  &\quad+ 2\rho\int_{\Omega}  u_n^2v_n^2 dx \bigg ].
\end{align*}
By the boundedness of $\{u_n\}$ and $\{v_n\}$, we deduce that $\{\lambda_n\}$ is also bounded. 
We may assume, up to subsequences, that $\lambda_n \to \lambda_\rho \in \mathbb{R}$. Furthermore, since $\Omega $ is bounded, by standard arguments, we infer that $(u_n,v_n) \to (u_\rho ,v_\rho)$ in $H^1(\Omega)\times H^1(\Omega)$. 

It remains to show the Morse index $m(u_\rho,v_\rho)\leq 2$.
Since the tangent space $T_{(u,v)}\mathcal{S}_a$ has codimension $1$, in view of $\tilde{m}_{\zeta_n}(u_n,v_n)\leq 1$, also by the fact $d^2|_{\mathcal{S}_a}J_\rho$ and the tangent space $T_{(u_n,v_n)}\mathcal{S}_a$ vary with continuity, taking the limit, we infer that $\tilde{m}_0(u_\rho,v_\rho) \leq 1$, see \cite{BCJS2023}.
Then, using similar arguments as in \cite{BCJS2023} we obtain the desired estimate $m(u_\rho,v_\rho)\leq 2$.
\end{proof}

We now consider the nontriviality of solution $(u_\rho,v_\rho)$ in Proposition \ref{mp critical pt}. 
Regarding the Lagrange multiplier $\lambda_\rho$ associated with the semi-trivial solutions, we have the following property.
\begin{lem}
Assume that $m>0$ and $a\in (0, a^*)$ and let $(u_\rho,v_\rho)$ be a semi-trivial solution to \eqref{approximategrad}. Then 
\begin{equation*}
	\lambda_\rho \leq -\frac{1}{a}m.~~ 
\end{equation*}
\end{lem}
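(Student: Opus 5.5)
The plan is to combine the Lagrange multiplier identity obtained by testing the equations with the fact that $(u_\rho,v_\rho)$ sits at the mountain pass level $c_\rho$, which by Lemma \ref{uni mountain} lies strictly above $m$. Here I take $(u_\rho,v_\rho)$ to be the critical point given by Proposition \ref{mp critical pt}; I assume this is what ``a semi-trivial solution to \eqref{approximategrad}'' refers to, since a general semi-trivial solution of the system need not carry the energy information $J_\rho(u_\rho,v_\rho)=c_\rho$.

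By symmetry of the roles of the two components, assume without loss of generality that $v_\rho\equiv 0$. The first equation of \eqref{approximategrad} then reduces to the linear Neumann problem
\[-\Delta u_\rho+V_1(x)u_\rho+\lambda_\rho u_\rho=0 \quad \text{in } \Omega.\]
The mass constraint becomes $\int_\Omega u_\rho^2\,dx=a$. Testing this equation with $u_\rho$ and using the Neumann boundary condition, I get
\[\int_\Omega |\nabla u_\rho|^2\,dx+\int_\Omega V_1(x)u_\rho^2\,dx=-\lambda_\rho a .\]
Since the coupling term $\rho\int_\Omega u_\rho^2v_\rho^2\,dx$ vanishes, this yields
\[J_\rho(u_\rho,0)=\int_\Omega |\nabla u_\rho|^2\,dx+\int_\Omega V_1(x)u_\rho^2\,dx=-\lambda_\rho a .\]
The case $u_\rho\equiv 0$ is identical, with $V_2$ in place of $V_1$.

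Proposition \ref{mp critical pt} gives $J_\rho(u_\rho,v_\rho)=c_\rho$, and Lemma \ref{uni mountain} gives $c_\rho>m$ for every $\rho\in[\frac12,1]$, provided $a\in(0,a^*)$. Combining these, $-\lambda_\rho a=c_\rho>m$, so
\[\lambda_\rho<-\frac{m}{a},\]
which is even slightly stronger than the claimed inequality $\lambda_\rho\le -\frac{1}{a}m$.

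The main point needing care is not computational but interpretive: one must make sure the semi-trivial solution under consideration is the mountain pass critical point, so that its energy equals $c_\rho$. Once that is fixed, the remaining steps are only the testing identity and the quoted lower bound $c_\rho>m$. The $H^1$ regularity of $u_\rho$ justifies testing the weak formulation with $u_\rho$ itself, so no extra regularity is needed.
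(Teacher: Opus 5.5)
Your proof is correct and is essentially the paper's argument. The paper tests the full system to get the general identity $c_\rho+\lambda_\rho a=\rho\int_\Omega u_\rho^2v_\rho^2\,dx$ and then drops the coupling term for semi-trivial solutions, whereas you set $v_\rho\equiv 0$ first and test only the surviving equation; both give $-\lambda_\rho a=c_\rho>m$, and your reading that $(u_\rho,v_\rho)$ is the mountain pass critical point with $J_\rho(u_\rho,v_\rho)=c_\rho$ is exactly what the paper implicitly uses.
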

\begin{proof}
	Since $(u_\rho,v_\rho,\lambda_\rho)$ is a solution of \eqref{approximategrad}, it holds that
\begin{equation}
	J_\rho'(u_\rho,v_\rho)(u_\rho,v_\rho) + 2\lambda_\rho a = 0.
\end{equation}
From $J_\rho(u_\rho,v_\rho) - \frac{1}{2}J_\rho'(u_\rho,v_\rho)(u_\rho,v_\rho) = \rho \int_\Omega u_\rho^2v_\rho^2\,dx $, it follows that
\begin{eqnarray*}
	c_\rho + \lambda_\rho a =\rho \int_{\Omega}u_\rho^2v_\rho^2dx.
\end{eqnarray*}
If $(u_\rho,v_\rho)$ is a semi-trivial solution, then $\int_\Omega u_\rho^2v_\rho^2\,dx=0$. 
Hence by Lemma \ref{uni mountain}, we obtain
\begin{equation*}
	\lambda_\rho = -\frac{1}{a}c_\rho \leq -\frac{1}{a}m.
\end{equation*}
The proof is complete.
\end{proof}

On the other hand, the Lagrange multiplier $\lambda_\rho$ admits uniform upper and lower bounds  with respect to the mass $a$.
\begin{lem}\label{bound lambda}
	Assume that $(u_\rho,v_\rho)$ is a semi-trivial solution to  \eqref{approximategrad}. Then there exist constants $\underline{\lambda}\leq \overline{\lambda}$ such that
\begin{equation*}
	\underline{\lambda}\leq \lambda_\rho \leq \overline{\lambda}, ~~\forall \rho\in \hat I.
\end{equation*}
\end{lem}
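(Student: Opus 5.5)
If $(u_\rho,v_\rho)$ is semi-trivial, say $v_\rho\equiv 0$ (the case $u_\rho\equiv0$ is symmetric), then the coupling terms in \eqref{approximategrad} drop out. The first equation becomes the linear Neumann eigenvalue problem
\[
-\Delta u_\rho+V_1(x)u_\rho=-\lambda_\rho u_\rho \ \text{ in } \Omega, \qquad \frac{\partial u_\rho}{\partial\nu}=0 \ \text{ on } \partial\Omega ,
\]
with $\int_\Omega u_\rho^2\,dx=a$, so $u_\rho\not\equiv0$. Hence $-\lambda_\rho$ is a Neumann eigenvalue of $-\Delta+V_1$; similarly, in the other case it is one of $-\Delta+V_2$. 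Write $\mu_1^{(i)}\le\mu_2^{(i)}\le\cdots$ for the Neumann eigenvalues of $-\Delta+V_i$, counted with multiplicity. The spectrum is discrete and $\mu_1^{(i)}\ge -\|V_i\|_{L^\infty}$. This gives the upper bound at once:
\[
\lambda_\rho\le \overline{\lambda}:=\max_{i=1,2}\|V_i\|_{L^\infty}.
\]

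For the lower bound I use the Morse index estimate $m(u_\rho,v_\rho)\le 2$ from Proposition \ref{mp critical pt}. Here $\hat I$ denotes the full-measure set of $\rho$ supplied by that proposition. With $v_\rho\equiv0$, the quadratic form of Definition \ref{Morseindexgrad} splits as
\[
Q_{u_\rho,0}(\phi,\psi;\Omega)=\int_\Omega\left(|\nabla\phi|^2+(V_1+\lambda_\rho)\phi^2\right)dx+\int_\Omega\left(|\nabla\psi|^2+(V_2+\lambda_\rho)\psi^2-\rho u_\rho^2\psi^2\right)dx .
\]
The $\phi$-part is negative on the span of the Neumann eigenfunctions of $-\Delta+V_1$ with eigenvalues $\mu_k^{(1)}<-\lambda_\rho$. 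Since the Morse index is at most $2$, there are at most two such eigenvalues, so $-\lambda_\rho\le\mu_3^{(1)}$. The $\psi$-part is only more negative because of $-\rho u_\rho^2\psi^2$, so the same reasoning gives $-\lambda_\rho\le\mu_3^{(2)}$ as well. The case $u_\rho\equiv0$ is handled identically with the roles of the components swapped. Setting $\underline{\lambda}:=-\max_{i}\mu_3^{(i)}$, we get $\lambda_\rho\ge\underline\lambda$, and $\underline\lambda$ depends only on $V_1,V_2,\Omega$, not on $a$ or $\rho$. In fact the strict inequality already gives $-\lambda_\rho<\mu_3^{(1)}$, because $-\lambda_\rho$ is itself an eigenvalue whose eigenspace yields no negative directions.

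The main point requiring care is that the Morse index of Definition \ref{Morseindexgrad} is computed for the free (unconstrained) quadratic form, with test functions in $H^1(\Omega)\times H^1(\Omega)$. This is what makes the eigenfunction spans admissible negative subspaces, and Proposition \ref{mp critical pt} provides exactly this free Morse index bound $\le 2$. An alternative route also works: combining with the previous lemma, $-\lambda_\rho\ge m/a>0$, so $-\lambda_\rho$ is a positive eigenvalue, and the Morse index bound caps its position in the spectrum.
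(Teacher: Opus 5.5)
Your proof is correct and follows the paper's argument: with $v_\rho\equiv 0$, $-\lambda_\rho$ is a Neumann eigenvalue of $-\Delta+V_1$, and $m(u_\rho,v_\rho)\le 2$ places it among the first three eigenvalues, which is exactly the paper's $\underline{\lambda}=\min\{-\mu_2,-\tilde\mu_2\}$; your $\overline{\lambda}=\max_i\|V_i\|_{L^\infty}$ is just an explicit bound for the paper's $\max\{-\mu_0,-\tilde\mu_0\}$. Your side remark claiming the strict inequality $-\lambda_\rho<\mu_3^{(1)}$ is not justified, since the eigenspace of $-\lambda_\rho$ contributes zero rather than negative directions and so equality is not ruled out by the Morse bound, but nothing in the lemma depends on it.
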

\begin{proof}
Without loss of generality, we may assume that $v_\rho\equiv 0$. Then system \eqref{approximategrad} reduces to the single equation
\begin{equation}\label{single eq grad}
\begin{cases}
-\Delta u_\rho+ V_1(x)u_\rho=-\lambda_\rho u_\rho	,  & x\in \, \Omega,\\
\frac{\partial u_\rho}{\partial \nu}=0,  \, &x\in\partial \Omega,\\
\int_{\Omega} |u_\rho|^2 dx=a.
\end{cases}
\end{equation}
Consequently, $u_\rho$ is an eigenfunction of the operator $-\Delta + V_1(x)$ with Neumann boundary conditions associated to the eigenvalue $-\lambda_\rho$. From $m(u_\rho,0)\le 2$, it follows that $-\lambda_\rho=\mu_i$ for some $i\in \{0,1,2\}$, where $\mu_i$ denotes the $i$-th eigenvalue of
\begin{equation*}
\begin{cases}
-\Delta u + V_1(x)u = \mu u,  & x\in \, \Omega,\\
\frac{\partial u}{\partial \nu}=0,  \, &x\in\partial \Omega.
\end{cases}
\end{equation*}
Similarly, if $u_\rho\equiv 0$, then $-\lambda_\rho =\tilde\mu_i$ for some $i\in \{0,1,2\}$, where $\tilde \mu_i$ denotes the $i$-th eigenvalue of
\begin{equation*}
\begin{cases}
-\Delta v + V_2(x)v = \mu v,  & x\in \, \Omega,\\
\frac{\partial v}{\partial \nu}=0,  \, &x\in\partial \Omega.
\end{cases}
\end{equation*}
Setting $\underline{\lambda}=\min\{-\mu_2,-\tilde \mu_2\}$ and $\overline{\lambda}=\max\{-\mu_0,-\tilde\mu_0\}$, we conclude that
\begin{equation*}
	\underline{\lambda}\leq \lambda_\rho \leq\overline{\lambda}.
\end{equation*}
This completes the proof.
\end{proof}

\begin{lem}\label{nontrivial sol}
	There exists $\hat a^*\in (0, a^*)$ such that whenever $a\in (0, \hat a^*)$, any solution $(u_\rho,v_\rho,\lambda_\rho)$ to \eqref{approximategrad} must be nontrivial.
\end{lem}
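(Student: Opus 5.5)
By contradiction, I will play the two previous lemmas against each other. If $(u_\rho,v_\rho,\lambda_\rho)$ is a semi-trivial solution of \eqref{approximategrad} coming from Proposition \ref{mp critical pt}, then it sits at the mountain pass level $c_\rho$ and has $m(u_\rho,v_\rho)\le 2$. The preceding lemma gives an upper bound:
\[
\lambda_\rho\le -\frac{m}{a}.
\]
Lemma \ref{bound lambda} gives a lower bound:
\[
\lambda_\rho\ge \underline{\lambda}=\min\{-\mu_2,-\tilde\mu_2\}.
\]
This lower bound depends only on $V_1$, $V_2$ and $\Omega$, through the first three Neumann eigenvalues of $-\Delta+V_i$. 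It does not depend on $a$, $\rho$ or $m$.

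The plan is therefore as follows. First fix $m>0$ once and for all, say $m=1$, and let $a^*=a^*(m)$ be given by Lemma \ref{biaoda}. Lemmas \ref{existgrad}, \ref{uni mountain} and Proposition \ref{mp critical pt} then hold for every $a\in(0,a^*)$, and $c_\rho>m$ for all $\rho\in[\frac12,1]$. Next, using $\mu_2\ge \mu_0\ge -\|V_1\|_{L^\infty}$ and the analogous bound for $\tilde\mu_2$, note that $\underline{\lambda}$ is a finite real number. Now choose
\[
\hat a^*:=\min\Bigl\{a^*,\ \frac{m}{2(|\underline{\lambda}|+1)}\Bigr\}.
\]
For $a<\hat a^*$ this gives $-\frac{m}{a}<-2(|\underline\lambda|+1)<\underline\lambda$. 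A semi-trivial solution would then satisfy both $\lambda_\rho\le -m/a<\underline{\lambda}$ and $\lambda_\rho\ge\underline\lambda$, which is impossible. Hence $u_\rho\not\equiv0$ and $v_\rho\not\equiv0$.

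I should also rule out the fully trivial case $u_\rho\equiv v_\rho\equiv0$. This is immediate, since $\int_\Omega(u_\rho^2+v_\rho^2)=a>0$. So every solution at the mountain pass level is nontrivial.

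The point needing most care is making sure the two bounds are compatible as $a$ varies. The semi-trivial lemma needs $J_\rho(u_\rho,v_\rho)=c_\rho>m$ with $m$ fixed independently of $a$. This holds because Lemma \ref{uni mountain} is valid for all $a<a^*(m)$ with the same $m$.

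The bound in Lemma \ref{bound lambda} uses the Morse index information $m(u,0)\le2$. The reason it gives $-\lambda_\rho\in\{\mu_0,\mu_1,\mu_2\}$ is this: if $-\lambda_\rho=\mu_k$ with $k\ge3$, then the test functions $(\phi,0)$, with $\phi$ in the span of the first three eigenfunctions, make $Q_{u,0}$ negative. Indeed, for such $\phi$,
\[
Q=\int|\nabla\phi|^2+V_1\phi^2-\mu_k\phi^2<0,
\]
since the coupling term vanishes when $v=0$. This contradicts the Morse index bound, and it is exactly why $\underline\lambda$ is independent of $a$. With these checked, the lemma follows.
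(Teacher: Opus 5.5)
Your proposal is correct and is essentially the paper's proof. It combines the bound $\lambda_\rho\le -m/a$ for a semi-trivial mountain-pass solution with the $a$-independent Morse-index bound $\lambda_\rho\ge\underline{\lambda}$, and then takes $a$ small; your threshold $\min\{a^*, m/(2(|\underline{\lambda}|+1))\}$ is a slightly more careful version of the paper's $\min\{a^*,-m/\underline{\lambda}\}$, which tacitly assumes $\underline{\lambda}<0$ (if $\underline{\lambda}\ge 0$, the contradiction holds for every $a$).
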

\begin{proof}
	Supposing instead that $(u_\rho, v_\rho)$ is a semi-trivial solution, we then take $a$ sufficiently small such that $a < -\frac{m}{\underline{\lambda}}$. This implies $-\frac{1}{a}m < \underline{\lambda}$, contradicting Lemma \ref{bound lambda}. 
Thus, by defining $\hat a^* := \min\left\{a^*, -\frac{m}{\underline{\lambda}}\right\}$, we conclude the proof.
\end{proof}

\section{Blow-up analysis}\label{blow up ana}

In this section, we aim to establish that the sequence $\{({u_{\rho_n}}, v_{\rho_n})\}$ is bounded in $H^1(\Omega)$. Once this boundedness is confirmed, we will then show that, as $\rho_n\to 1^-$, the sequence $\{({u_{\rho_n}}, v_{\rho_n})\}$ converges to a constrained critical point of $J_1$.

To simplify notation,  we define
 $u_n:= u_{\rho_n}$, $v_n:= v_{\rho_n}$, $\lambda_n:=\lambda_{\rho_n}$, $c_n:= c_{\rho_n}$, and $J_{n}:=J_{\rho_n}$. The pair
 $(u_n,v_n)$ is a weak solution of the following system
\begin{eqnarray}\label{withngrad}
\begin{cases}
-\Delta u_n+V_1(x)u_n+ \lambda_n u_n=\rho_n u_nv_n^2,  & x\in \Omega,\\
-\Delta v_n+V_2(x)v_n+ \lambda_n v_n=\rho_n  u_n^2v_n,  &x\in \Omega,\\
\frac{\partial u_n}{\partial \nu}=\frac{\partial v_n}{\partial \nu}=0,  \, &x\in\partial \Omega,\\
\int_\Omega \left(|u_n|^2+|v_n|^2\right) dx=a.
\end{cases}
\end{eqnarray}
Additionally, it holds that $m(u_n, v_n)\le2$.

From (\ref{withngrad}), we obtain
\begin{eqnarray}\label{1-2-1}
c_n+\lambda_n a =\rho_n \int_{\Omega}u_n^2v_n^2dx.
\end{eqnarray}
If $(u_n, v_n)$ is a semi-nontrivial solution, i.e., $u_n\equiv 0$ or $v_n\equiv 0$, then (\ref{1-2-1}) gives $c_n+\lambda_n a=0$. Since $\rho_n \nearrow 1^-$ and $c_n>0$, taking $n$ sufficiently large, there exist constants $\sigma_1, \sigma_2>0$ independent of $n$ such that $\sigma_1\le c_n \le \sigma_2$, which shows that $\{\lambda_n\}$ is bounded. Then it follows that $\{(u_n, v_n)\}$ is also bounded in $H^1(\Omega)\times H^1(\Omega)$.

In what follows, we assume that $(u_n, v_n)$ is a nontrivial solution.

From (\ref{withngrad}) and the boundedness of $\{c_n\}$, we deduce the following: if the sequence $\{\lambda_n\}$ is bounded, then $\{(u_n, v_n)\}$ is bounded in $H^1(\Omega) \times H^1(\Omega)$. Moreover, it is straightforward to verify that $\lambda_n \ge -C$ for some constant $C > 0$ independent of $n$.  Furthermore, we have the following lemma.
\begin{lem}\label{udgrad}
If $\lambda_n \to +\infty$, then $\max\{\|u_n \|_{L^\infty} ,\|v_n \|_{L^\infty} \}\to +\infty$.
\end{lem}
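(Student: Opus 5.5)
We argue by contradiction. Suppose that $\lambda_n\to+\infty$ but, along a subsequence, $\|u_n\|_{L^\infty}+\|v_n\|_{L^\infty}\le M$ for some $M>0$ independent of $n$. The plan is to test the system \eqref{withngrad} against the solution itself. This gives a Nehari-type identity in which the term $\lambda_n a$ would dominate, while the nonlinear term is controlled by $M$ and the mass.

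Multiplying the first equation of \eqref{withngrad} by $u_n$ and the second by $v_n$, integrating over $\Omega$, and using the Neumann boundary condition (no boundary terms appear), we obtain
\begin{align*}
&\int_\Omega\left(|\nabla u_n|^2+|\nabla v_n|^2\right)dx+\int_\Omega\left(V_1u_n^2+V_2v_n^2\right)dx+\lambda_n a\\
&=2\rho_n\int_\Omega u_n^2v_n^2\,dx.
\end{align*}
We bound the right-hand side by $2\int_\Omega u_n^2v_n^2\,dx\le 2M^2\int_\Omega u_n^2\,dx\le 2M^2a$. On the left, the gradient terms are nonnegative and the potential terms are at least $-\left(\max_i\|V_i\|_{L^\infty}\right)a$. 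Hence
\[
\lambda_n a\le 2M^2a+\left(\max_{i=1,2}\|V_i\|_{L^\infty}\right)a,
\]
so $\lambda_n\le 2M^2+\max_i\|V_i\|_{L^\infty}$. This contradicts $\lambda_n\to+\infty$.

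An alternative route uses \eqref{1-2-1} directly: $\lambda_n a=\rho_n\int_\Omega u_n^2v_n^2\,dx-c_n\le M^2a$, since $c_n>0$. This argument is even shorter. I expect no real obstacle here. The only point to check is that weak solutions are regular enough for the $L^\infty$ norms to be finite, which follows from standard elliptic bootstrap (the nonlinearity is cubic in dimension $3$, so it is subcritical). Assuming the $L^\infty$ norms are finite, the argument above is complete.
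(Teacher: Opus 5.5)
Your proof is correct and follows the paper's argument. You test \eqref{withngrad} against $(u_n,v_n)$ and bound the nonlinear term by $L^\infty$ norms, the only cosmetic difference being that the paper uses $2\rho_n\int_\Omega u_n^2v_n^2\,dx\le 2|\Omega|\,\|u_n\|_{L^\infty}^2\|v_n\|_{L^\infty}^2$ where you use the mass constraint; your alternative via \eqref{1-2-1} and $c_n>0$ is also valid.
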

\begin{proof}
	Since $(u_n,v_n)$ satisfies \eqref{withngrad}, we get that 
\begin{equation*}
		\int_{\Omega}\left(|\nabla u_n|^2+ |\nabla v_n|^2\right)dx+\int_{\Omega}\left(V_1(x)u_n^2+V_2(x)v_n^2\right)dx+\lambda_n a=2\rho_n\int_{\Omega} u_n^2v_n^2dx.
	\end{equation*}
	Thus, we obtain
	\begin{equation*}
		\lambda_n a\leq \left(\max_{i=1,2}\|V_i\|_{L^\infty}\right)a + 2|\Omega|\|u_n \|_{L^\infty}^2\|v_n \|_{L^\infty}^2,
	\end{equation*}
	which implies this lemma and the proof is complete.
\end{proof}

We will perform a blow-up analysis to characterize the asymptotic behavior of the solutions as $\lambda_n\to +\infty$ and in this process, we need the following Liouville type theorems, the proofs of which will be postponed to the appendix.



\begin{prop}\label{wholeliou}
Let $(u,v)$ be a bounded solution of the system:
\begin{eqnarray*}
\begin{cases}
-\Delta u =  uv^2,  & x\in \mathbb{R}^3 ,\\
-\Delta v =  u^2v , & x\in \mathbb{R}^3 .\\
\end{cases}
\end{eqnarray*}
If $(u,v)$ has a finite Morse index, then $u\equiv v \equiv 0$.
\end{prop}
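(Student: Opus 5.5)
The plan is to reduce the statement to a Pohozaev identity on large balls. That identity will force $uv\equiv 0$, once we know the sharp decay $|u(x)|+|v(x)|\le C|x|^{-1}$, which matches the scaling of the cubic system. The decay itself will come from a Bahri--Lions type argument: finite Morse index gives stability outside a compact set, and a rescaling argument then reduces everything to a Liouville theorem for stable solutions. I write $U=(u,v)$ and $F(u,v)=\frac12u^2v^2$, so the system is $-\Delta U=\nabla F(U)$ and it is the Euler--Lagrange system of $\int\frac12|\nabla U|^2-F(U)$.

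\emph{Step 1: stability outside a compact set.} Since $m(u,v)<\infty$, a standard argument (as in Bahri--Lions, Farina) gives $R_0>0$ such that $Q_{u,v}(\phi,\psi;\mathbb{R}^3)\ge 0$ for all $(\phi,\psi)\in C_c^\infty(\mathbb{R}^3\setminus \overline{B_{R_0}})^2$. Here $Q_{u,v}$ has $V_i\equiv 0$, $\lambda=0$, $\rho=1$. The argument: if the form were negative on test pairs supported arbitrarily far out, disjoint supports would produce negative subspaces of arbitrarily large dimension.

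\emph{Step 2: decay via blow-up.} I claim $|U(x)|\,|x|\le C$ for $|x|\ge 2R_0$. Suppose not, and pick $x_k$ with $|x_k|\to\infty$ and $|U(x_k)|\,|x_k|\to\infty$. By a doubling lemma (Pol\'a\v{c}ik--Quittner--Souplet) we may assume $M_k:=|U(x_k)|$ dominates $|U|$ up to a factor $2$ on $B_{k/M_k}(x_k)$, with $B_{k/M_k}(x_k)\subset\mathbb{R}^3\setminus B_{R_0}$. The rescalings $\tilde U_k(y)=M_k^{-1}U(x_k+y/M_k)$ solve the same system, since it is invariant under $U\mapsto \mu U(\mu\,\cdot)$. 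They satisfy $|\tilde U_k(0)|=1$ and $|\tilde U_k|\le 2$ on $B_k$, and they are stable on $B_k$. Elliptic estimates give convergence in $C^2_{loc}$ to a bounded, nontrivial, stable solution on all of $\mathbb{R}^3$.

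It therefore suffices to prove that bounded stable solutions on $\mathbb{R}^3$ vanish, and this is the step I expect to be the main obstacle. Testing stability with $(u\eta,v\eta)$ and using the equations gives
\[
4\int u^2v^2\eta^2\le \int (u^2+v^2)|\nabla\eta|^2 ,
\]
which alone is too weak, since $u^2+v^2$ is not controlled by $u^2v^2$. My first attempt is a Farina-type argument with test pairs such as $(|u|^{\gamma-1}u\eta,\,|v|^{\gamma-1}v\eta)$, or pairs adapted to the coupling like $(v\eta,u\eta)$ and combinations of these. The goal is an estimate
\[
\int_{B_R}|uv|^{q}\le C R^{3-2q'}
\]
with a negative exponent for suitable $q$, which yields $uv\equiv0$ as $R\to\infty$. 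In dimension $3$ the cubic power lies far below the Joseph--Lundgren range, so there should be room in the exponents. A fallback is the change of variables $w=u+v$, $z=u-v$, which gives $-\Delta w=uv\,w$ and $-\Delta z=-uv\,z$. Since the stability of the system forces $\int uv\,w^2$-type terms to be controlled, one can attempt to exploit this diagonal structure.

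\emph{Step 3: Pohozaev.} Given the decay, interior elliptic estimates on $B_{|x|/2}(x)$ give $|\nabla U(x)|\le C|x|^{-2}$. The Pohozaev identity on $B_R$, combined with $\int_{B_R}|\nabla U|^2=4\int_{B_R}F+\int_{\partial B_R}U\cdot\partial_\nu U$, gives
\[
\Bigl(\tfrac{1}{2}\cdot 4-3\Bigr)\int_{B_R}F=\text{boundary terms on }\partial B_R .
\]
The boundary terms are of order $R\cdot R^2\cdot R^{-4}=R^{-1}$ and $R^2\cdot R^{-1}\cdot R^{-2}=R^{-1}$, so they tend to $0$. Meanwhile $F\le C|x|^{-4}$ is integrable at infinity. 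Hence $\int_{\mathbb{R}^3}u^2v^2=0$, so $uv\equiv0$, and $u,v$ are bounded harmonic functions tending to $0$ at infinity. Therefore $u\equiv v\equiv 0$.
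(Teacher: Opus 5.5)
You have a genuine gap. Your Step 1, the doubling/rescaling reduction in Step 2, and the Pohozaev computation in Step 3 are sound: the system is invariant under $U\mapsto \mu U(\mu\,\cdot)$, and stability passes to the limit. But the statement everything rests on is left unproved, namely that a bounded solution which is stable on all of $\mathbb{R}^3$ vanishes. You only list candidate test functions and a change of variables, and hope there is room in the exponents.

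The routes you sketch do not obviously close this. Diagonal pairs such as $(u\eta,v\eta)$ or Farina's $(|u|^{\gamma-1}u\eta,|v|^{\gamma-1}v\eta)$ place $u$ against the weight $v^2$ and $v$ against $u^2$ in $Q_{u,v}$. So every nonlinear term they generate vanishes wherever $u=0$ or $v=0$. The resulting inequalities are therefore satisfied by the bounded nontrivial solutions $(c,0)$, and can never force $U\equiv 0$ on their own. Even reaching $uv\equiv 0$ is unclear: for $(u\eta,v\eta)$ you only get $\int_{B_R}u^2v^2\le CR$, as you observed. Your plan would in any case need an extra crossed test to rule out the constants $(c,0)$ and $(0,c)$ after obtaining $uv\equiv0$.

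The missing idea is the pair you mention in passing but never compute with, $(\phi,\psi)=(v\eta,u\eta)$; this is exactly what the paper uses in Lemmas~\ref{wuqing} and~\ref{hcwj}. It gives
$Q_{u,v}(v\eta,u\eta)=\int(|\nabla(v\eta)|^2+|\nabla(u\eta)|^2)-\int(u^4+v^4+4u^2v^2)\eta^2$.
Testing the equations with $(u\eta^2,v\eta^2)$ yields
$\int(|\nabla(u\eta)|^2+|\nabla(v\eta)|^2)=2\int u^2v^2\eta^2+\int(u^2+v^2)|\nabla\eta|^2$.
Hence, wherever stability holds,
\[
\int(u^2+v^2)^2\eta^2\le\int(u^2+v^2)|\nabla\eta|^2 .
\]
Replacing $\eta$ by $\eta^2$ and using Young's inequality gives $\int(u^2+v^2)^2\eta^4\le 16\int|\nabla\eta|^4$. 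So a globally stable solution satisfies $\int_{B_R}(u^2+v^2)^2\le CR^{-1}\to0$, which closes your Step 2.

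With this inequality in hand, Step 2 is actually unnecessary. The paper applies it with cutoffs supported outside $B_{R_0}$ (your Step 1) to get $\int_{\mathbb{R}^3}(u^2+v^2)^2<\infty$, and hence $\int_{\mathbb{R}^3}(|\nabla u|^2+|\nabla v|^2)<\infty$. It then uses the Pohozaev identity on $B_{R_n}$ along radii $R_n\to\infty$ where the boundary terms tend to $0$. Together with $\int(|\nabla u|^2+|\nabla v|^2)=2\int u^2v^2$, this gives $\int u^2v^2=\int(|\nabla u|^2+|\nabla v|^2)=0$, and integrability of $(u^2+v^2)^2$ excludes nonzero constants. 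Your pointwise-decay version of Step 3 is also valid, just longer.
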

\begin{prop}\label{halfliou}
Let $(u,v)$ be a bounded solution of the  system:
\begin{eqnarray}\label{tdxitong}
\begin{cases}
-\Delta u =  uv^2,  & x\in \mathbb{R}^3_+,\\
-\Delta v =  u^2v,  & x\in \mathbb{R}^3_+,\\
\frac{\partial u}{\partial \nu}=\frac{\partial v}{\partial \nu}=0,  \, &x\in\partial \mathbb{R}^3_+.\\
\end{cases}
\end{eqnarray}
If $(u,v)$ has a finite Morse index, then $u\equiv v \equiv 0$.
\end{prop}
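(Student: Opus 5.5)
Since $(u,v)$ solves the Neumann problem on $\mathbb{R}^3_+$, I would reduce to Proposition \ref{wholeliou} by even reflection across $\partial\mathbb{R}^3_+=\{x_3=0\}$. Set $\tilde u(x',x_3)=u(x',|x_3|)$ and $\tilde v(x',x_3)=v(x',|x_3|)$ for $(x',x_3)\in\mathbb{R}^3$. The homogeneous Neumann conditions $\partial_{x_3}u=\partial_{x_3}v=0$ on $\{x_3=0\}$ make the reflected pair a weak solution of $-\Delta \tilde u=\tilde u\tilde v^2$, $-\Delta\tilde v=\tilde u^2\tilde v$ in all of $\mathbb{R}^3$. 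To check this, test with an arbitrary $\varphi\in C_c^\infty(\mathbb{R}^3)$, split the integral into the two half-spaces, and use the Neumann condition so that the boundary terms vanish. The pair $(\tilde u,\tilde v)$ is still bounded, and elliptic regularity upgrades it to a classical solution.

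The main point is to show that $(\tilde u,\tilde v)$ has finite Morse index in $\mathbb{R}^3$. Let $k=m(u,v)<\infty$ for the half-space problem (test functions in $H^1_{loc}$ with no boundary restriction, as in Definition \ref{Morseindexgrad}). Take a subspace $W$ of $H^1_{loc}(\mathbb{R}^3)^2$ on which $Q_{\tilde u,\tilde v}(\cdot;\mathbb{R}^3)$ is negative definite; by density we may take compactly supported elements. Decompose each $(\phi,\psi)$ into its even part and odd part in $x_3$. Because $\tilde u,\tilde v$ are even, the quadratic form splits orthogonally:
\[
Q(\Phi)=Q(\Phi_e)+Q(\Phi_o),
\]
and each piece equals twice the corresponding half-space form $Q_{u,v}(\cdot;\mathbb{R}^3_+)$ of the restriction. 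The even parts restricted to $\mathbb{R}^3_+$ are admissible Neumann test functions. The odd parts vanish on $\{x_3=0\}$, so their restrictions are also in $H^1(\mathbb{R}^3_+)$ and are admissible. Hence the negative space in the even class has dimension at most $k$, and likewise in the odd class. A standard argument then gives $m(\tilde u,\tilde v)\le 2k$: if $\dim W>2k$, some nonzero $\Phi\in W$ has even part $Q$-orthogonal to the even negative space, giving $Q(\Phi_e)\ge0$, and similarly $Q(\Phi_o)\ge 0$. More concretely, negative definiteness of $Q$ on $W$ forces negativity on the even projection or on the odd projection, which bounds the dimensions. I would phrase this cleanly as follows: $\mathbb{R}^3$-negative subspaces split along the even/odd invariant decomposition because $Q$ commutes with the reflection. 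The index of $Q$ on the full space therefore equals the sum of the indices on the even and odd subspaces, each at most $k$.

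With finite Morse index established, Proposition \ref{wholeliou} applied to $(\tilde u,\tilde v)$ gives $\tilde u\equiv\tilde v\equiv 0$, hence $u\equiv v\equiv0$. I expect the main obstacle to be the rigorous Morse index accounting, and in particular the additivity of the index across the reflection-invariant decomposition, in the $H^1_{loc}$ setting of Definition \ref{Morseindexgrad}. I would handle it by working with compactly supported test functions and noting that reflection is an isometry preserving $Q$, so $Q$ is block-diagonal with respect to even/odd. A secondary technical point is the weak-solution verification across the interface, which is routine once the $C^1$ regularity up to the boundary of the bounded Neumann solution is available.
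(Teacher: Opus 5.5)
Your proof is correct, but it runs in the opposite direction from the paper's.

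The paper proves Proposition~\ref{halfliou} directly on $\mathbb{R}^3_+$. First, finite index plus disjoint supports gives $\tilde Q_{u,v}(v\varphi_{R_0,R},u\varphi_{R_0,R};\mathbb{R}^3_+)\ge 0$ for large $R$ (Lemma~\ref{wuqing}). Combining this with the identity obtained by testing the equations with $(u\phi_R^2,v\phi_R^2)$ yields $\int(u^2+v^2)^2\phi_R^2\le\int(u^2+v^2)|\nabla\phi_R|^2$. An iteration using boundedness then gives $u^2+v^2\in L^2$ and finite energy (Lemma~\ref{hcwj}). Finally, a Pohozaev identity on half-balls (Lemma~\ref{kexn}) is used; its flat-boundary terms vanish because $x\cdot\nu=0$ and $\partial_\nu u=\partial_\nu v=0$ there. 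Together with the Nehari identity this forces $\int u^2v^2=0$. Proposition~\ref{wholeliou} is only asserted to follow ``similarly''.

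You instead derive the half-space case from the whole-space one. You use even reflection and the bound $m(\tilde u,\tilde v)\le 2m(u,v)$, which comes from the even/odd block-diagonalization of $Q$. The right way to phrase that count is your $Q$-orthogonality argument against maximal negative subspaces of the even and odd sectors. Equivalently, apply Sylvester's law on the finite-dimensional space spanned by the even and odd parts of the elements of $W$, which contains $W$. Your ``more concretely'' sentence is not an argument and can be dropped.

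The two routes buy different things:
\begin{itemize}
\item The paper's route is self-contained and needs no index comparison across domains.
\item Yours is modular, so only one Liouville theorem has to be proved. Your reflection lemma is also exactly what is tacitly needed in case (b) of Theorem~\ref{localbegrad}, where finiteness of the Morse index of the reflected limit in \eqref{hebijigrad} is asserted without comment.
\end{itemize}

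Two points to keep in mind. First, your route shifts all the analytic content to Proposition~\ref{wholeliou}, whose proof is exactly the stability/Pohozaev argument above, run on full balls. Second, controlling the even sector requires the Neumann reading of the half-space index: test functions need not vanish on $\partial\mathbb{R}^3_+$. This is the reading in Definition~\ref{Morseindexgrad} and the one actually used in Lemma~\ref{wuqing}. It is not $C^1_c$ of the open half-space, as the appendix literally writes.
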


We first give the local behavior of $(u_n, v_n)$ as follows.

\begin{thm}\label{localbegrad}
Suppose that $\lambda_n \to +\infty$.
Let $P_n\in \overline{\Omega}$ be a point such that, for some sequence $R_n\to \infty$,
 \begin{equation*}
 	|u_n(P_n)|=\max_{B_{R_n\tilde{\epsilon}_n}(P_n) \cap \overline{\Omega}}|u_n(x)|~~ \mbox{or}~~ |v_n(P_n)|=\max_{B_{R_n\tilde{\epsilon}_n}(P_n) \cap \overline{\Omega}}|v_n(x)|,
 	 \end{equation*}
where $\tilde{\epsilon}_n=|u_n(P_n)|^{-1} \to 0$ or $\tilde{\epsilon}_n=|v_n(P_n)|^{-1} \to 0$. Set $\epsilon_n=\lambda_n^{-\frac{1}{2}}$. 
Then
  \begin{equation}\label{3.8.1grad}
 \left(\frac{\tilde{\epsilon}_n}{\epsilon_n} \right)^2\to \tilde{\lambda} \in (0, 1].
 \end{equation}
Furthermore, suppose that
 \begin{equation}\label{innergrad}
 	\limsup_{n\to +\infty}\frac{dist(P_n, \partial\Omega)}{\tilde{\epsilon}_n}= +\infty.
 \end{equation}
Then, up to a subsequence, the following results hold:
 \begin{itemize}
	\item[(i)] $P_n\to P\in\Omega$;
    \item[(ii)] 
$\frac{dist(P_n, \partial\Omega)}{\epsilon_n}\to+\infty$ as $n\to+\infty$, and the scaled sequences
 	\begin{equation}\label{equivalentgrad}
 \bar{u}_n(x)= \epsilon_n u_n(\epsilon_n x+ P_n),~~ \bar{v}_n(x)= \epsilon_n v_n(\epsilon_n x+ P_n),~~ x\in \Omega_n:=\frac{\Omega-P_n}{\epsilon_n}
 	\end{equation}
 converge respectively to functions $\bar{u}$ and $\bar{v}$ in $C^2_{loc}(\mathbb{R}^3)$ as $n\to \infty$, where the pair $(\bar{u}, \bar{v})$
 solves the system
\begin{equation*}
\begin{cases}
-\Delta \bar{u}+ \bar{u}= \bar{u}\bar{v}^2~~~  &\text{in}~~ \mathbb{R}^3 ,\\
-\Delta \bar{v}+ \bar{v}=\bar{u}^2\bar{v}~~~  &\text{in}~~ \mathbb{R}^3,\\
\bar{u}(x), \bar{v}(x)\to 0~~\mbox{as}~~|x|\to +\infty;
\end{cases}
\end{equation*}
	\item[(iii)] there exists $\phi_n\in C_0^{\infty}(\Omega)$ with supp$\phi_n \subset B_{R\epsilon_n}(P_n)$ for some $R>0$, such that
	\[Q_{u_n,v_n}(\phi_n,\phi_n;\Omega)<0.\]
 \end{itemize}
Instead of \eqref{innergrad}, we suppose that
  \begin{equation}\label{onboundarygrad}
 	\limsup_{n\to +\infty}\frac{dist(P_n, \partial\Omega)}{\tilde{\epsilon}_n}< +\infty.
 \end{equation}
Then, up to a subsequence, the following results hold:
 \begin{itemize}
	\item[(i)] $P_n\to P\in\partial\Omega$;
 	\item[(ii)]
$\frac{dist(P_n, \partial\Omega)}{\epsilon_n}\to d_0\ge0$ as $n\to+\infty$, and the scaled sequences $\{\bar{u}_n\}$ and $\{\bar{v}_n\}$ defined in \eqref{equivalentgrad}  converge respectively to functions $\bar{u}$ and $\bar{v}$ in $C^2_{loc}(\mathbb{R}^3_+)$ as $n\to \infty$, where $(\bar{u}, \bar{v})$
solves the system
 \begin{equation*}
 \left\{\begin{array}{ll}
-\Delta \bar{u}+ \bar{u}=\bar{u}\bar{v}^2~~&\text{in}~~ \{x_3>-d_0\} ,\\
-\Delta \bar{v}+ \bar{v}=\bar{u}^2\bar{v}~~&\text{in}~~ \{x_3>-d_0\},\\
\frac{\partial \bar{u}}{\partial x_3}= \frac{\partial \bar{v}}{\partial x_3}=0~~~  &\text{on}~~ \{x_3=-d_0\},\\
\bar{u}(x) ,\bar{v}(x)\to 0  ~~\mbox{as}~~|x|\to +\infty;
        \end{array}\right.
        \end{equation*}   
	\item[(iii)] there exists $\phi_n\in C_0^{\infty}(\Omega)$ with supp$\phi_n \subset B_{R\epsilon_n}(P_n)\cap \overline{\Omega}$ for some  $R>0$, such that
	\[Q_{u_n,v_n}(\phi_n,\phi_n;\Omega)<0.\]
	\end{itemize}
\end{thm}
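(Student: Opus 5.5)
The plan is a standard blow-up argument: first rescale at the scale $\tilde\epsilon_n$, use the Liouville theorems (Propositions \ref{wholeliou} and \ref{halfliou}) to rule out $\tilde\epsilon_n\ll\epsilon_n$, and then rescale at the scale $\epsilon_n$. Assume without loss of generality that $\tilde\epsilon_n=|u_n(P_n)|^{-1}$ with $|u_n(P_n)|$ maximal on $B_{R_n\tilde\epsilon_n}(P_n)\cap\overline\Omega$.

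\textbf{Step 1: proof of \eqref{3.8.1grad}.} Set $\tilde u_n(y)=\tilde\epsilon_n u_n(\tilde\epsilon_n y+P_n)$ and $\tilde v_n(y)=\tilde\epsilon_n v_n(\tilde\epsilon_n y+P_n)$. These satisfy
\[
-\Delta\tilde u_n+\tilde\epsilon_n^2(V_1(\tilde\epsilon_n y+P_n)+\lambda_n)\tilde u_n=\rho_n\tilde u_n\tilde v_n^2,
\]
the analogous equation for $\tilde v_n$, Neumann conditions on the rescaled boundary, and $|\tilde u_n(0)|=1$, $|\tilde u_n|\le1$ on $B_{R_n}$.

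\emph{Upper bound.} To prove $\limsup(\tilde\epsilon_n/\epsilon_n)^2\le1$, i.e.\ $\tilde\epsilon_n^2\lambda_n\le 1+o(1)$, test the $u_n$-equation at its maximum point. If the maximum lies in the interior, or on $\partial\Omega$ (where the Neumann condition and the maximum principle/Hopf lemma apply), then $-\Delta u_n(P_n)\ge0$ for positive $u_n$. This gives $\lambda_n+V_1(P_n)\le\rho_n v_n(P_n)^2$. Controlling $v_n(P_n)$ requires comparing the two components. I expect to handle this by choosing $P_n$ as the point realizing the larger of $\|u_n\|_\infty$ and $\|v_n\|_\infty$ locally, so that $v_n(P_n)^2\le \tilde\epsilon_n^{-2}$. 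For general $P_n$ the mutual-bound step is where care is needed; a local version of the same maximum argument on $B_{R_n\tilde\epsilon_n}$ should suffice.

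\emph{Lower bound.} Suppose $\tilde\epsilon_n^2\lambda_n\to0$ along a subsequence. By elliptic estimates, $C^2_{loc}$ compactness holds provided $\tilde v_n$ is also locally bounded, which follows from the same maximal choice or from a doubling-lemma refinement. The limit $(\tilde u,\tilde v)$ is then a bounded solution of the pure system on $\mathbb R^3$ or on a half-space (depending on whether $\mathrm{dist}(P_n,\partial\Omega)/\tilde\epsilon_n$ diverges), with $|\tilde u(0)|=1$. Finite Morse index of the limit follows from $m(u_n,v_n)\le2$: if the limit had $3$ negative directions with compact support, they would pull back to negative directions of $Q_{u_n,v_n}$, since the terms $\tilde\epsilon_n^2(V_i+\lambda_n)\to0$. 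This contradicts Propositions \ref{wholeliou} and \ref{halfliou}. Hence $\tilde\lambda\in(0,1]$.

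\textbf{Step 2: the rescaled limit at scale $\epsilon_n$.} Since $\tilde\epsilon_n\sim\epsilon_n$, the functions $\bar u_n,\bar v_n$ are bounded on $B_{R_n\sqrt{\tilde\lambda}/2}$ and satisfy
\[
-\Delta\bar u_n+(1+\epsilon_n^2V_1)\bar u_n=\rho_n\bar u_n\bar v_n^2,
\]
and similarly for $\bar v_n$. Schauder estimates give $C^2_{loc}$ convergence to a solution of the limit system, with $|\bar u(0)|=\tilde\lambda^{-1/2}\ne0$.

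\emph{Location of $P$.} Under \eqref{innergrad}, $\mathrm{dist}(P_n,\partial\Omega)/\epsilon_n\to\infty$, and the limit equation holds on all of $\mathbb R^3$. The claim $P\in\Omega$ requires $\mathrm{dist}(P_n,\partial\Omega)$ to stay away from $0$. I would take this from the subsequence/limsup formulation, or, if the statement is really meant with dist$/\epsilon_n\to\infty$ only, read (i) accordingly. Under \eqref{onboundarygrad}, flatten $\partial\Omega$ near $P$ by a local diffeomorphism that preserves the Neumann condition to first order; the rescaled metrics converge to the Euclidean one, and the limit lives on $\{x_3>-d_0\}$ with Neumann data.

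\emph{Decay at infinity.} Decay $\bar u,\bar v\to0$ at infinity is the main obstacle. Suppose there were $x_k$ with $|x_k|\to\infty$ and $|\bar u(x_k)|\ge\delta$. A doubling argument (Poláčik–Quittner–Souplet) around such points produces a second blow-up limit, again nontrivial. That limit carries its own negative direction, with support disjoint from the first. Repeating gives arbitrarily many disjointly supported negative directions, contradicting the Morse index bound $\le2$ (passed to the limit, giving $m(\bar u,\bar v)\le2$). If instead the limit is only known to have finite Morse index, then outside a large ball the solution is stable. For stable solutions of $-\Delta w+w=\ldots$ in an exterior domain, a local rescaling plus the Liouville propositions force smallness, and then comparison with $e^{-|x|/2}$ gives decay.

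\textbf{Step 3: the negative direction (iii).} The limit $(\bar u,\bar v)$ is nontrivial, and I expect both components to be nonzero. If $\bar v\equiv0$, then $-\Delta\bar u+\bar u=0$ with $\bar u$ bounded forces $\bar u\equiv0$, contradicting $|\bar u(0)|\neq0$. Test $Q$ with $(\bar u,\bar v)$ itself:
\[
Q=\int(|\nabla\bar u|^2+|\nabla\bar v|^2+\bar u^2+\bar v^2)-\int(\bar v^2\bar u^2+\bar u^2\bar v^2+4\bar u^2\bar v^2)=2\int\bar u^2\bar v^2-6\int\bar u^2\bar v^2<0.
\]
Here I use the Nehari identity, valid by the decay from Step 2. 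Truncating with a cutoff on $B_R$ keeps this negative for large $R$. Scaling back, $\phi_n(x)=\epsilon_n^{-1/2}\chi\bar u(\cdot)$ gives $Q_{u_n,v_n}(\phi_n,\psi_n;\Omega)=\epsilon_n^{-1}\cdot(\text{negative}+o(1))$. The statement writes $(\phi_n,\phi_n)$; I would produce the pair $(\phi_n,\psi_n)$ built from $(\chi\bar u,\chi\bar v)$, which is what is actually needed for the later counting.
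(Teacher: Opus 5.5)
Your two-scale blow-up is the paper's argument: rescale at $\tilde\epsilon_n$, transfer $m(u_n,v_n)\le 2$ to the limit with compactly supported pairs, exclude $\tilde\lambda=0$ by Propositions \ref{wholeliou} and \ref{halfliou}, then rescale at $\epsilon_n$. Your reservations about the statement are also justified.

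\begin{itemize}
\item The paper's proof likewise treats only a global maximum point of $|u_n|$ with $\max|u_n|\ge\max|v_n|$. For general $P_n$ you need the local dominance $|v_n|\le|u_n(P_n)|$ on $B_{R_n\tilde\epsilon_n}(P_n)\cap\overline\Omega$, which is what the choice of $P_n^2$ in Theorem \ref{globalbegrad} supplies. A local maximum of $|u_n|$ alone does not control $v_n(P_n)$, so your remark that ``a local version should suffice'' is true only with that hypothesis.
\item The paper never proves $P\in\Omega$ under \eqref{innergrad}, and it does not follow from it: \eqref{innergrad} is compatible with $\mathrm{dist}(P_n,\partial\Omega)\sim\epsilon_n^{1/2}$. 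Only the dichotomy for $\mathrm{dist}(P_n,\partial\Omega)/\epsilon_n$ is used later.
\item The diagonal pairs $(\phi,\phi)$, both in \eqref{localbu eq1} and in (iii), should be general pairs, as you use them. The map $(u,v)\mapsto(u,-v)$ preserves the system and the Morse index, and it turns a profile $(U_0,U_0)$ into $(U_0,-U_0)$. For that profile the rescaled diagonal form $2\int(|\nabla\phi|^2+\phi^2)+2\int U_0^2\phi^2$ is positive definite.
\end{itemize}

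The one step that does not work as you wrote it is the decay of $(\bar u,\bar v)$, which the paper merely asserts. At scale $\epsilon_n$ the limit is bounded, so no rescaling produces the pure system, and Propositions \ref{wholeliou}--\ref{halfliou} are of no use there. In your translation route, the claim that each far-away limit ``carries its own negative direction'' is exactly what needs proof.

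The missing ingredient is that a bounded \emph{stable} solution of $-\Delta u+u=uv^2$, $-\Delta v+v=u^2v$ in $\mathbb{R}^3$ vanishes. This is proved as in Lemma \ref{hcwj}. Test the equations with $\chi^2u$ and $\chi^2v$, and insert the pair $(\chi v,\chi u)$ into the quadratic form; this gives
\[
\int_{\mathbb{R}^3}\chi^2(u^2+v^2)^2\,dx\le\int_{\mathbb{R}^3}(u^2+v^2)|\nabla\chi|^2\,dx ,
\]
and the same H\"older iteration forces $u\equiv v\equiv0$.

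Finite Morse index gives stability outside a large ball. Hence translates of $(\bar u,\bar v)$ along any $|x_k|\to\infty$ converge to stable bounded solutions, and therefore to $0$; in the boundary case, translate the reflection across $\{x_3=-d_0\}$. This gives uniform decay. Comparison then yields the exponential decay that your Step 3 identity $Q(\chi\bar u,\chi\bar v)=-4\int\chi^2\bar u^2\bar v^2+\int(\bar u^2+\bar v^2)|\nabla\chi|^2$ needs in order to become negative for large cutoffs.

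A minor slip: with the normalization $\epsilon_n^{-1/2}$ the form is scale invariant in dimension three, so there is no prefactor $\epsilon_n^{-1}$. The sign is unaffected.
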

\begin{proof}
Without loss of generality, we may assume that
\begin{equation*}
  \max\limits_{x\in \overline{\Omega}}|u_n(x)|\geq\max\limits_{x\in \overline{\Omega}} |v_n(x)|.
\end{equation*}
Firstly, we take the local maximum point $P_n\in \overline{\Omega}$ such that $|u_n(P_n)|=\max\limits_{x\in \overline{\Omega}}|u_n(x)|$. 
From \eqref{withngrad}, we obtain the inequality $\frac{\lambda_n}{ |u_n(P_n)|^2}\leq 1$, which further implies that
\begin{equation*}
	\frac{\lambda_n}{|u_n(P_n)|^{2}}\to \tilde{\lambda}\in [0, 1].
\end{equation*}
We next prove $\tilde{\lambda}>0$. Define
\begin{equation*}
	\tilde{u}_n(x):=\tilde{\epsilon}_n u_n(\tilde{\epsilon}_nx+ P_n),~~
	\tilde{v}_n(x):=\tilde{\epsilon}_n v_n(\tilde{\epsilon}_nx+ P_n), ~~ x\in \tilde{\Omega}_n:=\frac{\Omega-P_n}{\tilde{\epsilon}_n}.
\end{equation*}
Clearly, $(\tilde{u}_n, \tilde{v}_n)$ satisfies
\begin{equation*}
	\begin{cases}
		-\Delta \tilde{u}_n+V_1(\tilde{\epsilon}_nx+ P_n)\tilde{\epsilon}_n^2 \tilde{u}_n+ \lambda_n\tilde{\epsilon}_n^2 \tilde{u}_n=\rho_n\tilde{u}_n\tilde{v}_n^2,  & x\in \tilde{\Omega}_n,\\
-\Delta \tilde{v}_n+V_2(\tilde{\epsilon}_nx+ P_n)\tilde{\epsilon}_n^2 \tilde{v}_n+ \lambda_n\tilde{\epsilon}_n^2 \tilde{v}_n=\rho_n  \tilde{u}_n^2\tilde{v}_n,  & x\in \tilde{\Omega}_n,\\
\frac{\partial \tilde{u}_n}{\partial \nu}=\frac{\partial \tilde{v}_n}{\partial \nu}=0,  \, &x\in\partial \tilde{\Omega}_n,\\
\max\limits_{x\in \overline{\tilde{\Omega}}_n}|\tilde{v}_n|\leq\max\limits_{x\in \overline{\tilde{\Omega}}_n}|\tilde{u}_n|= |\tilde{u}_n(0)|= 1.
	\end{cases}
\end{equation*}
Let $d_n:=dist(P_n, \partial \Omega)$.
Then
\begin{equation*}
	\frac{d_n}{\tilde{\epsilon}_n}\to L\in [0, +\infty]
	~~\mbox{and}~~\tilde{\Omega}_n\to 
	\begin{cases}
	\mathbb{R}^3,~~&L=+\infty;\\
	\mathbb{H},~~&L<+\infty,	
	\end{cases}
\end{equation*}
where $\mathbb{H}$ denotes a half-space such that $0\in \overline{\mathbb{H}}$ and $d(0, \partial \mathbb{H})=L$.

By the elliptic regularity theory it follows that, up to a subsequence, $\tilde{u}_n\to \tilde{u}$ and $\tilde{v}_n\to \tilde{v}$ in $C^2_{loc}(\overline{D})$, where $(\tilde{u}, \tilde{v})$ solves
\begin{equation*}
\begin{cases}
	-\Delta \tilde{u}+ \tilde{\lambda}\tilde{u}=\tilde{u}\tilde{v}^2,  & x\in D,\\
-\Delta \tilde{v}+ \tilde{\lambda} \tilde{v}=\tilde{u}^2\tilde{v},  & x\in D,\\
\frac{\partial \tilde{u}}{\partial \nu}=\frac{\partial \tilde{v}}{\partial \nu}=0,  \, &x\in\partial D,
\end{cases}
\end{equation*}
where $D$ is either $\mathbb{R}^3$ or $\mathbb{H}$.

Now we claim that
\begin{equation}\label{aamwi}
 \text{the Morse index of}~	(\tilde{u}, \tilde{v}) ~\text{is less than or equal to 2}.
\end{equation}
In fact, if not, we may assume that there exist $k>2$ functions $\phi_1,\cdots, \phi_k\in C_0^{\infty}(D)$ and $\phi_1,\cdots, \phi_k$ are orthogonal in $H^1(D)$, such that
\[
 2\int_{D} |\nabla \phi_i|^2 dx +2\tilde{\lambda}\int_{D}|\phi_i|^2dx -\int_{D}\left(\tilde{v}^2\phi_i^2+\tilde{u}^2\phi_i^2+4\tilde{u}\tilde{v}\phi_i^2\right) dx<0
\]
holds for all $i\in \{1,\cdots, k\}$
 Moreover, let \[\phi_{i,n}(x)=\tilde{\epsilon}_n^{-\frac{1}{2}}\phi_i (\frac{x-P_n}{\tilde{\epsilon}_n}).\]
By a direct computation, we have
\begin{align}\label{localbu eq1}
	&Q_{u_n,v_n}(\phi_{i,n},\phi_{i,n};\Omega)\nonumber\\
  &= 2\int_{\Omega}\left(|\nabla \phi_{i,n} |^2 +\lambda_n|\phi_{i,n} |^2\right) dx+\int_{\Omega} (V_1(x)+V_2(x))|\phi_{i,n}|^2 dx\nonumber\\
	&\quad- \rho_n \int_{\Omega} \left(v_n^2\phi_{i,n}^2+ u_n^2\phi_{i,n}^2+ 4 u_nv_n\phi_{i,n}^2 \right)dx\nonumber\\
	&=2\int_{\tilde{\Omega}_n}\left(|\nabla \phi_i|^2 +\lambda_n\tilde{\epsilon}_n^2 |\phi_i|^2\right)dx\nonumber\\
  &\quad+\int_{\tilde{\Omega}_n} (V_1(\tilde{\epsilon}_nx+P_n)+V_2(\tilde{\epsilon}_nx+P_n))\tilde{\epsilon}_n^2|\phi_{i}|^2dx\nonumber \\
	&\quad-\rho_n\int_{\tilde{\Omega}_n} \left( \tilde{v}_n^2\phi_i^2+\tilde{u}_n^2\phi_i^2+4\tilde{u}_n\tilde{v}_n\phi_i^2 \right)dx\nonumber\\
	&\to 2\int_{D} |\nabla \phi_i|^2 dx +2\tilde{\lambda}\int_{D}|\phi_i|^2dx -\int_{D}\left(\tilde{v}^2\phi_i^2+\tilde{u}^2\phi_i^2+4\tilde{u}\tilde{v}\phi_i^2\right) dx<0,
\end{align}
which contradicts to $m(u_n, v_n)\leq 2$. Hence \eqref{aamwi} holds.

Having established that $(\tilde{u}, \tilde{v})$ has finite Morse index, applying the Liouville type results, see Proposition \ref{wholeliou} and Proposition \ref{halfliou},
we conclude that the occurrence of $\tilde{\lambda}=0$ is ruled out, regardless of $D$ is a half-space or a whole space.
Hence, $\tilde{\lambda}\in (0,1]$. 	

We note that 
$(\bar{u}_n, \bar{v}_n)$ defined in \eqref{equivalentgrad} satisfies
\begin{equation*}
	\begin{cases}
-\Delta \bar{u}_n+ \bar{u}_n=\rho_n  \bar{u}_n\bar{v}_n^2,  & x\in \Omega_n,\\
-\Delta \bar{v}_n+ \bar{v}_n=\rho_n \bar{u}_n^2\bar{v}_n,  & x\in\Omega_n,\\
\frac{\partial \bar{u}_n}{\partial \nu}=\frac{\partial \bar{v}_n}{\partial \nu}=0,  \, &x\in\partial \Omega_n,\\
\max\limits_{x\in \overline{\Omega}_n} |\bar{v}_n|\leq \max\limits_{x\in \overline{\Omega}_n}|\bar{u}_n|= \frac{\epsilon_n}{\tilde{\epsilon}_n}\to \tilde{\lambda}^{-\frac{1}{2}}.
	\end{cases}
\end{equation*}
Up to a subsequence, we have  $\bar{u}_n \to \bar{u}$ and $\bar{v}_n \to \bar{v}$ in $C^2_{loc}(\overline{D})$, where $D$ is either $\mathbb{R}^3$ or a half-space $\mathbb{H}$, and $(\bar{u}, \bar{v})$ solves
\begin{equation}\label{jinshenghgrad}
	\begin{cases}
-\Delta \bar{u}+ \bar{u}=\bar{u}\bar{v}^2,  & x\in D,\\
-\Delta \bar{v}+ \bar{v}=\bar{u}^2\bar{v},  & x\in D,\\
\frac{\partial \bar{u}}{\partial \nu}=\frac{\partial \bar{v}}{\partial \nu}=0,  \, &x\in\partial D,\\
\max\limits_{x\in \overline{D}} |\bar{v}|\leq \max\limits_{x\in\overline{D}}|\bar{u}|= \tilde{\lambda}^{-\frac{1}{2}}.
	\end{cases}
\end{equation}
Furthermore, we also have $m(\bar u, \bar v)\leq 2$. 
Assuming now (up to a subsequence) that $P_n\to P\in \overline{\Omega}$, we consider the following two cases:
\begin{itemize}
	\item[(a)]  If $\limsup\limits_{n\to +\infty}\frac{dist(P_n, \partial\Omega)}{\tilde{\epsilon}_n}= +\infty$, then $\limsup\limits_{n\to +\infty}\frac{dist(P_n, \partial\Omega)}{\epsilon_n}= +\infty$ and thus $D=\mathbb{R}^3$. Since $m(\bar u, \bar v)\le 2$, standard regularity arguments imply that $\bar{u},\bar{v}\in C^2(\mathbb{R}^3)$ and $\bar{u}(x),\bar{v}(x)\to 0$ as $|x|\to \infty$.

	 \item[(b)] If $0\le\limsup\limits_{n\to +\infty}\frac{dist(P_n, \partial\Omega)}{\tilde{\epsilon}_n}< +\infty$, then $0\le\limsup\limits_{n\to +\infty}\frac{dist(P_n, \partial\Omega)}{\epsilon_n}< +\infty$
and we may assume that $\frac{dist(P_n, \partial\Omega)}{\epsilon_n}\to d_0\ge 0$ as $n\to +\infty$. Up to a space rotation, we may assume that the tangent to $\partial \Omega$ at $P$ is parallel to the plane $\{x_3=-d_0\}$.
Then, up to subsequences,  $\bar{u}_n\to \bar{u}$ and $\bar{v}_n\to \bar{v}$ in $ C^2_{loc}(\{x_3>-d_0\})$, and  $(\bar{u},\bar{v})$ satisfies                      
\begin{equation*}
\begin{cases}
	-\Delta \bar{u}+ \bar{u}= \bar{u}\bar{v}^2~  &\mbox{in}~~ \{x_3>-d_0\},\\
-\Delta \bar{v}+ \bar{v}= \bar{u}^2\bar{v}~  &\mbox{in}~~ \{x_3>-d_0\},\\
|\bar v(x)|\leq|\bar u(x)|\leq|\bar u(0)|=~\tilde{\lambda}^{-\frac{1}{2}}~  &\mbox{in}~~ \{x_3>-d_0\},\\
\frac{\partial \bar{u}}{\partial x_3}=\frac{\partial \bar{v}}{\partial x_3}=0~ &\mbox{on}~~ \{x_3=-d_0\}. 
\end{cases}
\end{equation*}     
We extend $\bar{u}$ and $\bar{v}$ by reflection with respect to $\{x_3= -d_0\}$ as follows:
        \begin{equation*}
        \bar{u}_*(x_1,x_2,x_3):=
        	\begin{cases}
        		\bar{u}(x_1,x_2,x_3-d_0)~~&\text{if}~~x_3\geq 0,\\
        		\bar{u}(x_1,x_2,-x_3-d_0)~~&\text{if}~~x_3<0,
        		\end{cases}
        \end{equation*}
        \begin{equation*}
        	\bar{v}_*(x_1,x_2,x_3):=
        	\begin{cases}
        		\bar{v}(x_1,x_2,x_3-d_0)~~&\text{if}~~x_3\geq 0,\\
        		\bar{v}(x_1,x_2,-x_3-d_0)~~&\text{if}~~x_3<0.
        	\end{cases}
        \end{equation*}
The pair $(\bar{u}_*,\bar{v}_*)$ then solves the system:
\begin{equation}\label{hebijigrad}
\left\{
\begin{aligned}
& -\Delta \bar{u}_* + \bar{u}_* = \bar{u}_*\bar{v}_*^2 && \text{in } \mathbb{R}^3,\\
& -\Delta \bar{v}_* + \bar{v}_* = \bar{u}_*^2\bar{v}_* && \text{in } \mathbb{R}^3,\\
& |\bar v_*(x)| \leq |\bar u_*(x)| \leq |\bar u_*(0,0,d_0)| \\
& \qquad = |\bar u_*(0,0,-d_0)| = \tilde{\lambda}^{-\frac{1}{2}} && \text{in } \mathbb{R}^3,\\
& \frac{\partial \bar{u}}{\partial x_3} = \frac{\partial \bar{v}}{\partial x_3} = 0 && \text{on } \{x_3=0\}.
\end{aligned}
\right.
\end{equation}
It is evident that system \eqref{hebijigrad} can be solved for all $d_0\ge0$. Thus, by applying the elliptic regularity arguments and using the finite Morse index of the solution $(\bar{u}_*,\bar{v}_*)$, we deduce that $\bar{u}_*,\bar{v}_*\in C^2(\mathbb{R}^3)$ and $\bar{u}_*(x),\bar{v}_*(x)\to 0$ as $|x|\to \infty$.
\end{itemize}

Finally, by an argument similar to the one used for \eqref{localbu eq1} (see also \cite{EP2011,CRZ2025}), one readily verifies (iii), so the proof is complete.
\end{proof}

\begin{rem}\label{property for sol}{
We note that the system (\ref{jinshenghgrad}) is solvable in $H^1(D)\times H^1(D)$ for $D=\mathbb{R}^3$ and $D=\mathbb{R}_+^3$. In particular, the positive solution $(\bar u, \bar v)=(U_0, U_0)$ exists for $D=\mathbb{R}^3$, where $U_0$ is the unique radial ground-state of $-\Delta u+u=u^{3}$.  }
\end{rem}

In the following, we demonstrate the decay of the sum $u_n+v_n$ away from the potential blow-up points, thereby deriving the global asymptotic behavior of the pair $(u_n, v_n)$ on $\overline{\Omega}$. 

\begin{thm}\label{globalbegrad}
Assume that $\lambda_n\to +\infty$. Then
	there exist $k\in \{1,2\}$ and sequences of points $\{P_n^1\},\cdots,\{P_n^k\}$, such that
	\begin{equation}\label{faraway1grad}
		\lambda_n |P_n^i-P_n^j|^2\to +\infty, ~~\forall i\neq j,~~n\to \infty,
	\end{equation}
	\begin{align}\label{localmaxigrad}
		&\text{either}~|u_n(P_n^i)|= \max_{B_{R_n\lambda_n^{-\frac{1}{2}}}(P_n^i)\cap \overline{\Omega}}|u_n|\nonumber\\
		&\text{or}~|v_n(P_n^i)|= \max_{B_{R_n\lambda_n^{-\frac{1}{2}}}(P_n^i)\cap \overline{\Omega}}|v_n|,~~ \mbox{for~some~} R_n\to \infty, ~\mbox{for~every~} i,
		\end{align}
and constants $C_1>0$, $C_2>0$ such that
	\begin{align}\label{globalbegrad eq}
		&|u_n(x)|, |v_n(x)|\leq C_1 e^{C_1R}\lambda_n^{\frac{1}{2}}\sum_{i=1}^k e^{-C_2\lambda_n^{\frac{1}{2}}|x-P_n^i|},\nonumber\\ 
    &\forall x\in \overline{\Omega}\backslash \cup_{i=1}^k (B_{R\lambda_n^{-\frac{1}{2}}}(P_n^i)\cap \overline{\Omega}).
	\end{align}
\end{thm}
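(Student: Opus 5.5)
The plan is to follow the standard blow-up iteration of Esposito–Petralla \cite{EP2011} and \cite{CRZ2025}, adapted to the pair $(u_n,v_n)$ by working with the scalar quantity $w_n:=|u_n|+|v_n|$ (or $\max\{|u_n|,|v_n|\}$). The argument has three stages: selecting the blow-up points, proving the pointwise bound $\lambda_n^{-1/2}w_n\to0$ away from them, and converting that bound into exponential decay.

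\textbf{Selection of blow-up points.} First choose $P_n^1\in\overline\Omega$ as a global maximum point of $\max\{|u_n|,|v_n|\}$. Then Theorem \ref{localbegrad} applies with $R_n\to\infty$ arbitrary. It gives $\lambda_n^{-1/2}\max\{|u_n(P_n^1)|,|v_n(P_n^1)|\}\to\tilde\lambda^{-1/2}>0$, convergence of the rescaled pair to a nontrivial limit, and by (iii) a test function $\phi_n^1$ with $Q_{u_n,v_n}(\phi_n^1,\phi_n^1;\Omega)<0$ supported in $B_{R\epsilon_n}(P_n^1)$.

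Inductively, suppose $P_n^1,\dots,P_n^j$ are chosen. Consider
\[
\limsup_{n}\ \lambda_n^{-1/2}\max\Big\{\,\max\{|u_n(x)|,|v_n(x)|\}:\ x\in\overline\Omega,\ \min_i \lambda_n^{1/2}|x-P_n^i|\ge R\Big\}.
\]
If this quantity does not tend to $0$ as $R\to\infty$, take $P_n^{j+1}$ to be the maximum point over the region $\{\min_i|x-P_n^i|\ge R_n\epsilon_n\}$ with $R_n\to\infty$ slowly. The rescaled limit at $P_n^{j+1}$ is nontrivial, so $\lambda_n|P_n^{j+1}-P_n^i|^2\to\infty$ by the $C^2_{loc}$ convergence at the earlier points, and the local maximality \eqref{localmaxigrad} holds on $B_{R_n\epsilon_n}(P_n^{j+1})$ after a diagonal choice. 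Theorem \ref{localbegrad} then applies again and yields a new $\phi_n^{j+1}$. The supports of these test functions are disjoint, so $\phi_n^1,\dots,\phi_n^{j+1}$ span a space on which $Q_{u_n,v_n}<0$. This gives Morse index at least $j+1$, and since $m(u_n,v_n)\le2$ the iteration stops with some $k\le2$.

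\textbf{Exponential decay outside the balls.} Once the process stops, we know that for every $\delta>0$ there is $R$ such that $\max\{|u_n|,|v_n|\}\le\delta\lambda_n^{1/2}$ on $\Omega_R:=\overline\Omega\setminus\cup_i B_{R\epsilon_n}(P_n^i)$. On $\Omega_R$ we then have $\rho_nv_n^2\le\delta^2\lambda_n$ and $\rho_nu_n^2\le\delta^2\lambda_n$. Kato's inequality applied to $w_n=|u_n|+|v_n|$ gives
\[
-\Delta w_n+(\lambda_n-\|V\|_\infty-\delta^2\lambda_n)\,w_n\le0 \quad\text{in }\Omega_R,\qquad\text{so}\quad -\Delta w_n+\tfrac{\lambda_n}{2}w_n\le0
\]
for $n$ large and $\delta$ small, with Neumann data on $\partial\Omega$ and $w_n\le C\lambda_n^{1/2}$ on the spheres $\partial B_{R\epsilon_n}(P_n^i)$.

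Compare $w_n$ with the supersolution
\[
\Psi_n(x)=C\lambda_n^{1/2}e^{C_1R}\sum_{i=1}^k e^{-C_2\lambda_n^{1/2}|x-P_n^i|}.
\]
Away from the boundary this is routine, since $\Delta e^{-c\sqrt{\lambda}r}\le c^2\lambda e^{-c\sqrt\lambda r}$ and $\frac{\lambda_n}{2}-C_2^2\lambda_n>0$ for $C_2$ small.

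\textbf{Main obstacle.} The hard step is the Neumann boundary, where the maximum principle needs $\partial_\nu\Psi_n\ge0$ and this fails for radial exponentials centered at interior points. Following \cite{CRZ2025}, the first remedy to try is to flatten $\partial\Omega$ locally by a diffeomorphism that preserves the Neumann condition, and to use in the boundary layer the reflected distance $|x-P_n^i|$ computed in flattened coordinates. This produces a uniformly elliptic operator with $O(1)$ first-order perturbations, which the $\lambda_n$ zeroth-order term absorbs. Equivalently, one can cover $\partial\Omega$ by finitely many charts and extend $w_n$ evenly across the boundary, after which the comparison takes place in a neighbourhood without boundary.

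Finally, the interior estimate is patched with the boundary charts by the maximum principle on $\Omega_R$. The $e^{C_1R}$ factor accounts for the values $w_n\le C\lambda_n^{1/2}$ on $\partial B_{R\epsilon_n}(P_n^i)$, since $\Psi_n\ge C\lambda_n^{1/2}$ there, and this yields \eqref{globalbegrad eq}.
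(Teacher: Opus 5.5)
Your selection of the blow-up points is the paper's Step 1. Your termination argument spells out what the paper's ``terminates after at most 2 steps'' leaves implicit: the test functions from Theorem \ref{localbegrad}(iii) have disjoint supports once $\lambda_n|P_n^i-P_n^j|^2\to\infty$, so $Q_{u_n,v_n}$ is negative on their span, and $m(u_n,v_n)\le 2$ forces $k\le 2$. Applying Kato's inequality to $w_n=|u_n|+|v_n|$ is also fine, and cleaner than the paper's inequality \eqref{lasa}, which is written for $\overline U_n$ itself.

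The gap is in the step you yourself flag as the main obstacle: neither remedy, as you state it, gives a comparison that can actually be run.

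First, computing $|x-P_n^i|$ in flattened coordinates does not fix the sign. Take $\Omega=\{y_3>0\}$ locally and $p=(p',p_3)$. At $y_3=0$ one gets $\partial_\nu e^{-C_2\lambda_n^{1/2}|y-p|}=-C_2\lambda_n^{1/2}\frac{p_3}{|y-p|}e^{-C_2\lambda_n^{1/2}|y-p|}<0$ whenever $p_3>0$, for instance for boundary blow-up with $d_0>0$. What would work is an image sum $e^{-C_2\lambda_n^{1/2}|y-p|}+e^{-C_2\lambda_n^{1/2}|y-p^*|}$ with $p^*=(p',-p_3)$, whose normal derivative vanishes. You would then need to check that the cut-off and curvature errors are $O(\lambda_n^{1/2})$ times the direct terms, and you do not construct this.

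Second, the even extension does not give ``a neighbourhood without boundary''. This is what the paper does, via $\Phi=\Phi_{Ib}^{-1}\circ\Phi_C^{-1}\circ\Phi_{Ob}$. The set $\overline\Omega\cup\Omega^{\theta_*}$ has an outer edge where the extended $w_n$ equals $w_n$ at depth $\theta_*$ inside $\Omega$. There $w_n$ is known a priori only to be $\le\delta\lambda_n^{1/2}$, while your global $\Psi_n$ is of order $\lambda_n^{1/2}e^{C_1R}e^{-C_2\lambda_n^{1/2}\theta_*}$. So $w_n\le\Psi_n$ fails on part of the boundary of the comparison region, and the maximum principle cannot be started.

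The paper gets around this by not using a global barrier. For each $x_0\in\Lambda_n$ it compares on the ball $B_{r_0}(x_0)$, $r_0=\mathrm{dist}(x_0,\partial\Lambda_n^*)$, with $\lambda_n^{1/2}\cosh(\gamma\lambda_n^{1/2}|x-x_0|)/\cosh(\gamma\lambda_n^{1/2}r_0)$. This needs only the uniform bound $\lambda_n^{1/2}$ on the sphere.
\begin{itemize}
\item If the ball reaches a blow-up ball, this gives $e^{\gamma R}\lambda_n^{1/2}e^{-\gamma\lambda_n^{1/2}|x_0-P_n^i|}$.
\item If it reaches the outer edge of the collar, it gives $\lambda_n^{1/2}e^{-\gamma\lambda_n^{1/2}\theta_*}\le\lambda_n^{1/2}e^{-\gamma\frac{\theta_*}{\mathrm{diam}(\Omega)}\lambda_n^{1/2}|x_0-P_n^i|}$.
\end{itemize}
This second case is precisely the missing ingredient, and it is why $C_2$ depends on $\theta_*$ and $\mathrm{diam}(\Omega)$.

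If you want to keep your global $\Psi_n$, add a collar barrier such as $\delta\lambda_n^{1/2}e^{-\gamma\lambda_n^{1/2}(\theta_*-t(x))}$. Here $t$ is a smooth extension of the signed distance to $\partial\Omega$, positive outside $\Omega$ and with $t\le 0$ on $\overline\Omega$. For small $\gamma$ this is a supersolution and dominates $w_n$ on the outer edge. On $\overline\Omega$ it is at most $\delta\lambda_n^{1/2}e^{-\gamma\lambda_n^{1/2}\theta_*}$, which is absorbed exactly as in the paper's case (ii).
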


\begin{proof} Following \cite{CRZ2025}(see also \cite{CJS2024,EP2011}), we present the proof in two steps.

{\bf Step 1.}
	There exist $k\in \{1,2\}$ and sequences of points $\{P_n^1\},\cdots,\{P_n^k\}$ such that  \eqref{faraway1grad} and \eqref{localmaxigrad} hold, and moreover
    \begin{equation}\label{decaygrad}
    	\lim_{R\to \infty}\left(\limsup_{n\to +\infty} \lambda_n^{-\frac{1}{2}} \max\left\{\max_{d_n(x)\geq R\lambda_n^{-\frac{1}{2}}}|u_n(x)|,\max_{d_n(x)\geq R\lambda_n^{-\frac{1}{2}}}|v_n(x)|\right\} \right)=0,
    \end{equation}
where $d_n(x):=\min\{|x-P_n^i|:i=1,\cdots,k\}$ is the distance function from $\{P_n^1,\cdots, P_n^k\}$ for $x\in\overline{\Omega}$.
 
Take $\{P_n^1\}\in \overline{\Omega}$ such that $|u_n(P_n^1)|=\max\limits_{\overline{\Omega}}|u_n(x)|\geq \max\limits_{\overline{\Omega}}|v_n(x)|$.
By contradiction, we may assume that there exists $\delta>0$, such that 
    \begin{equation*}
    	\lim_{R\to \infty}\left(\limsup_{n\to +\infty} \lambda_n^{-\frac{1}{2}} \max\left\{\max_{d_n(x)\geq R\lambda_n^{-\frac{1}{2}}}|u_n(x)|,\max_{d_n(x)\geq R\lambda_n^{-\frac{1}{2}}}|v_n(x)| \right\}\right)\geq 4\delta.
    \end{equation*}
Then for sufficiently large $R>0$, we have 
\begin{equation}\label{pn2grad}
	 \lambda_n^{-\frac{1}{2}} \max\left\{\max_{|x-P_n^1|\geq R\lambda_n^{-\frac{1}{2}}}|u_n(x)|,\max_{|x-P_n^1|\geq R\lambda_n^{-\frac{1}{2}}}|v_n(x)|\right\}\geq 2 \delta. 
\end{equation} 
Let $P_n^2\in \overline{\Omega}\backslash B_{R\lambda_n^{-\frac{1}{2}}}(P_n^1)$ be such that
\begin{equation*}
	\max\left\{|u_n(P_n^2)|,|v_n(P_n^2)|\right\}\ge\max\left\{\max_{\overline{\Omega}\backslash B_{R\lambda_n^{-\frac{1}{2}}}(P_n^1)}|u_n|,\max_{\overline{\Omega}\backslash B_{R\lambda_n^{-\frac{1}{2}}}(P_n^1)}|v_n|\right\}.
\end{equation*}
Without loss of generality, we may assume that 
 \begin{equation*}
 	|u_n(P_n^2)|=\max_{\overline{\Omega}\backslash B_{R\lambda_n^{-\frac{1}{2}}}(P_n^1)}|u_n|\geq \max_{\overline{\Omega}\backslash B_{R\lambda_n^{-\frac{1}{2}}}(P_n^1)}|v_n|.
 \end{equation*}
By \eqref{pn2grad}, we have $|u_n(P_n^2)|\to +\infty$ as $n\to +\infty$.

We now prove
 \begin{equation}\label{farawaygrad}
 	\lambda_n^{\frac{1}{2}}|P_n^1-P_n^2|\to +\infty.
 \end{equation}
If \eqref{farawaygrad} does not holds, we may assume that
 $\lambda_n^{\frac{1}{2}}|P_n^1-P_n^2|\to R'\geq R$.
In view of the arguments for Theorem \ref{localbegrad}, we have
 \begin{equation*}
 	\lambda_n^{-\frac{1}{2}}u_n(\lambda_n^{-\frac{1}{2}}x + P_n^1):=\bar{u}_n^1(x)\to \bar{u}(x) ~\mbox{in}~C^2_{loc}(D),
 \end{equation*}
 where $D$ is either $\mathbb{R}^N$ or a half-space.
Then, up to subsequences,
 \begin{equation*}
 	\lambda_n^{-\frac{1}{2}}u_n(P_n^2)=\bar{u}_n^1(\lambda^{\frac{1}{2}}(P_n^2-P_n^1)) \to \bar{u}(x'),~~~|x'|\geq R'>R.
 \end{equation*}
Since $\bar u(x)\to 0$ as $|x|\to +\infty$, we can choose $R$ larger such that
  \begin{equation*}
  	|\bar{u}(x)|\leq \delta, ~\forall |x|\geq R.
  \end{equation*}
This contradicts to \eqref{pn2grad}. Then \eqref{farawaygrad} is proved.

In what follows, we show
  \begin{equation}\label{yanggrad}
  	|u_n(P_n^2)|=\max_{B_{R_n^{(2)}\lambda_n^{-\frac{1}{2}}}(P_n^2)\cap \overline{\Omega}}|u_n| ~~~\mbox{for some}~ R_n^{(2)}\to +\infty.
  \end{equation}
  Let $\tilde{\epsilon}_n^{(2)}=|u_n(P_n^2)|^{-1}$. Clearly $\tilde{\epsilon}_n^{(2)}\to 0$. By \eqref{pn2grad},  we have $\tilde{\epsilon}_n^{(2)}\leq (2\delta)^{-1}\lambda_n^{-\frac{1}{2}}$.
It follows from \eqref{farawaygrad} that 
 \begin{equation*}
 	\tilde{R}_n^{(2)}:=\frac{|P_n^1-P_n^2|}{2\tilde{\epsilon}_n^{(2)}}\geq \frac{2\delta}{2}\lambda_n^{\frac{1}{2}}|P_n^1-P_n^2| \to +\infty,~~n\to\infty.
 \end{equation*}
Note that, for any $x\in B_{\tilde{R}_n^{(2)}\tilde{\epsilon}_n^{(2)}}(P_n^2)$ and any $R>0$, we have
 \begin{equation*}
 |x-P_n^1|\geq |P_n^2-P_n^1|- |x-P_n^2|\geq \frac{1}{2}|P_n^2-P_n^1|\geq R\lambda_n^{-\frac{1}{2}},
 \end{equation*}
which implies that
 \begin{equation*}
 	\overline{\Omega}\cap B_{\tilde{R}_n^{(2)}\tilde{\epsilon}_n^{(2)}}(P_n^2)\subset \overline{\Omega}\backslash B_{R\lambda_n^{-\frac{1}{2}}}(P_n^1).
 \end{equation*}
Hence, taking $R_n^{(2)}= \tilde{R}_n^{(2)}\lambda_n^{\frac{1}{2}}\tilde{\epsilon}_n^{(2)}\to +\infty$, we get \eqref{yanggrad} holds.

If the conclusion (\ref{decaygrad}) does not holds, we can continue to iterate the above arguments for other local maximum and this iteration process must terminate after at most 2 steps, thereby establishing that (\ref{decaygrad}) must eventually hold.
 
 {\bf Step 2.}
Decay estimates of $u_n$ and $v_n$ on $\{x\in \overline{\Omega}: d_n(x)\geq R\lambda_n^{-\frac{1}{2}}\}$. 

For a given $\theta>0$, we define 
$$\Omega_\theta:= \{x\in \overline{\Omega}: {\rm dist} (x,\partial\Omega)<\theta\}, 
$$ 
$$(\partial\Omega)_\theta: =\{(x,y):x\in \partial\Omega, y\in (-\theta,0]\nu_x\}, $$
where $\nu_x$ denotes the unit outer normal of $\partial\Omega$ at $x$. 
Noting that $\partial\Omega$ is a smooth, compact submanifold of $\mathbb{R}^N$, we can invoke the tubular neighborhood theorem to establish the existence of a diffeomorphism $\Phi_{Ib}$ that maps $\Omega_\theta$ onto $(\partial\Omega)_\theta$.
More precisely,  for any point  $x\in \Omega_\theta$,  it is straightforward to see that there exists a unique point $\bar{x}\in \partial\Omega$ such that ${\rm dist}(x, \bar{x})= {\rm dist}(x, \partial\Omega)$. Consequently, we can define $\Phi_{Ib}(x)=(\bar{x}, -{\rm dist}(x, \bar{x})\nu_{\bar{x}})$ for any $x\in \Omega_\theta$.
Furthermore, it is clear that $\Phi_{Ib}(x)=x$ for $x\in \partial\Omega$.

We also define 
$$\Omega^\theta = \{x\in \mathbb{R}^N\backslash \Omega: {\rm dist}(x,\partial\Omega)<\theta\},$$
and 
$$(\partial\Omega)^\theta=\{(x,y):x\in \partial\Omega, y\in [0,\theta)\nu_x\}. $$
Take $\hat{x}\in \partial\Omega$ to be the unique point satisfying ${\rm dist}(x, \hat{x})= {\rm dist}(x, \partial\Omega)$. We can then define a diffeomorphism $\Phi_{Ob}:\Omega^\theta \to (\partial\Omega)^\theta$ by 
\[\Phi_{Ob}(x)=(\hat{x}, dist(x, \hat{x})\nu_{\hat{x}}),~~\forall x\in \Omega^\theta.
\] 

Next, in order to extend system (\ref{grad}) to the larger domain $\overline{\Omega}\cup \Omega^\theta$, we introduce the reflection mapping
\[
\Phi_C:(\partial\Omega)_\theta\to (\partial\Omega)^\theta, ~~\Phi_C((x,y))=(x,-y).
\] 
Finally, we set  \[
\Phi:=\Phi_{Ib}^{-1}\circ \Phi_C^{-1}\circ \Phi_{Ob}.
\]
Then $\Phi$ is a diffeomorphism from $\Omega^\theta$ onto $\Omega_\theta$ and coincides with the identity on $\partial \Omega$.

Furthermore, we set
\begin{eqnarray*}
&&x=\Phi(z)=(\Phi_1(z),\Phi_2(z),\Phi_3(z)), \forall z\in\Omega^\theta, \\
&&z=\Psi(x)=\Phi^{-1}(x)=(\Psi_1(x),\Psi_2(x),\Psi_3(x)),  \forall x\in \Omega_\theta,
\end{eqnarray*}
and define the metric coefficients
\begin{equation*}
	g_{ij}= \sum_{l=1}^3 \frac{\partial \Phi_l}{\partial z_i}\frac{\partial \Phi_l}{\partial z_j},~~
	g^{ij}= \sum_{l=1}^3 \frac{\partial \Psi_i}{\partial x_l}\frac{\partial \Psi_j}{\partial x_l}(\Phi (z)).
\end{equation*}
On the boundary, we have $g_{ij}|_{\partial \Omega}= g^{ij}|_{\partial\Omega}=\delta_{ij}$, where $\delta_{ij}$ denotes the Kronecker symbol.
For simplicity, let $G=(g^{ij})$, $g={\rm det}(g_{ij})$, and define $\hat{u}_n(x)=u_n(\Phi(x))$, $\hat{v}_n(x)=v_n(\Phi(x))$ for $x\in \Omega^\theta$.
The pair $(\hat{u}_n, \hat{v}_n)$ then satisfies
\begin{equation*}
\begin{cases}
	-L \hat{u}_n+\sqrt{g}V_1(\Phi(x)) \hat{u}_n+ \sqrt{g}\lambda_n \hat{u}_n=\rho_n\sqrt{g}\hat{u}_n\hat{v}_n^2, & x\in \Omega^\theta,\\
-L \hat{v}_n+\sqrt{g}V_2(\Phi(x)) \hat{v}_n+ \sqrt{g}\lambda_n \hat{v}_n=\rho_n\sqrt{g}\hat{v}_n\hat{u}_n^2, & x\in \Omega^\theta,\\
\frac{\partial \hat{u}_n}{\partial \nu}=\frac{\partial \hat{v}_n}{\partial \nu}=0, \, &x\in\partial \Omega,
\end{cases}
\end{equation*}
where the operator $L$ is given by:
\[L u:= \sum_{i=1}^3 \frac{\partial}{\partial x_i}\left(\sqrt{g}\sum_{j=1}^3g^{ij}\frac{\partial u}{\partial x_j}\right). \]
We now introduce the piecewise-defined functions
\begin{equation*}
\overline{U}_n(x)=
\begin{cases}
u_n(x), x\in\Omega,\\
\hat{u}_n(x), x\in \Omega^\theta,		
\end{cases}
\overline{V}_n(x)=
\begin{cases}
v_n(x), x\in\Omega,\\
\hat{v}_n(x), x\in \Omega^\theta,		
\end{cases}
\end{equation*}
and the extended metric coefficients
\begin{equation*}
\bar{g}_{ij}=
\begin{cases}
	\delta_{ij}, x\in \Omega,\\
	g_{ij}, x\in \Omega^\theta,
\end{cases}
\bar{g}^{ij}=
\begin{cases}
	\delta_{ij}, x\in \Omega,\\
	g^{ij}, x\in \Omega^\theta.
\end{cases}
\end{equation*}
We also extend the potentials by
\begin{equation*}
\tilde{V}_1(x)=
\begin{cases}
V_1(x), &x\in\Omega,\\
V_1(\Phi(x)), &x\in \Omega^\theta,		
\end{cases}
~~~~~~\tilde{V}_2(x)=
\begin{cases}
V_2(x), &x\in\Omega,\\
V_2(\Phi(x)), &x\in \Omega^\theta,		
\end{cases}
\end{equation*}
Clearly, both the functions $\tilde{V}_i(x), i=1,2$ are bounded in $\overline{\Omega}\cup \Omega^\theta$.
Let $\bar{g}= det(\bar{g}_{ij})$ and 
 define the vector field
 $$A(x, \xi)=(A_1(x, \xi),A_2(x, \xi),A_3(x, \xi)), ~~\xi=(\xi_1,\xi_2,\xi_3),
 $$ 
 where
\[A_i(x,\xi)=\sqrt{\bar{g}}\sum_{j=1}^3\bar{g}^{ij}\xi_j.\]
The pair  $(\overline U_n, \overline V_n)$ then weakly solves the extended system:
\begin{equation*}
\begin{cases}
	-div (A(x, \nabla \overline U_n))+\sqrt{\bar g}\tilde{V}_1(x) \overline U_n+ \sqrt{\bar g}\lambda_n \overline U_n=\rho_n\sqrt{\bar{g}}\overline U_n\overline V_n^2, & x\in \overline{\Omega}\cup \Omega^\theta,\\
-div (A(x, \nabla \overline V_n))+\sqrt{\bar g}\tilde{V}_2(x) \overline V_n+ \sqrt{\bar g}\lambda_n \overline V_n=\rho_n\sqrt{\bar{g}} \overline V_n\overline U_n^2, & x\in \overline{\Omega}\cup \Omega^\theta.
\end{cases}
\end{equation*}
For an arbitrary point $\tilde{x}\in \overline{\Omega}$, choose $r>0$ such that $B_r(\tilde{x})\subset \overline{\Omega}\cup \Omega^\theta$. For $x\in B_r(\tilde{x})\backslash \tilde{x}$, set $\sigma= |x-\tilde{x}|$. If $\phi(\sigma)$ is any smooth increasing function,  a computation analogous to that in  \cite[Lemma 4.5]{CRZ2025} gives
\begin{equation}\label{rongyao}
	\begin{split}
		\left|\sum_{i=1}^3 \frac{\partial}{\partial x_i}\left(\sqrt{g}\sum_{j=1}^3g^{ij}\frac{\partial \phi}{\partial x_j}\right)\right|
	&\leq 2 |\phi''| + \frac{4}{\sigma}\phi' + \frac{C_{\Phi ,\Omega}\phi'}{\sigma},
	\end{split}
\end{equation}
where $\phi'=\frac{d\phi(\sigma)}{d\sigma}$ and $C_{\Phi ,\Omega}>0$ is a constant depending only on $\Phi$ and $\Omega$. Henceforth we fix
 $\theta_*\in (0, \theta)$ such that $\frac{1}{2}\leq\sqrt{\bar{g}}\leq \frac{3}{2}$ and \eqref{rongyao} is satisfied.
 
Define the set 
\begin{equation*}
	\Lambda_n^*:=\{x\in \overline{\Omega}\cup \Omega^{\theta_*}: d_n(x)\geq R\lambda_n^{-1/2}\}.
\end{equation*}
 We shall establish decay estimates for $\{u_n\}$ inside $\Lambda_n^*$; the arguments for
$\{v_n\}$ are completely analogous.  Set $\Lambda_n: =\{x\in \overline{\Omega}: d_n(x)\geq R\lambda_n^{-1/2}\}$. 
In view of \eqref{decaygrad}, for any prescribed $\epsilon\in (0,1)$ (to be determined later), we can find $R^*>0$ and a large positive integer $n_R\in \mathbb{N}$ such that
\begin{equation*}\label{feng}
	\max_{x\in \Lambda_n}|u_n(x)|\leq \lambda_n^{\frac{1}{2}}\epsilon,~~~~~\forall R> R^*, ~~~ \forall n\geq n_R.
\end{equation*}
Moreover, there exists $n_\theta>0$ with the property that for every
 $n>n_\theta$ and every $i\in \{1,\cdots, k\}$, 
 $$B_{\lambda_n^{-1/2}R}(P_n^i)\subset \Omega^{\theta_*}\cup \Omega_{\theta_*}.
 $$
 From the definition of $\overline U_n$, we therefore obtain
 \begin{equation*}
	|\overline U_n(x)|^2\leq \lambda_n \epsilon^2,~~~~~\forall x\in \Lambda_n^*,~\forall n> n_{\theta R}:=\max\{n_\theta, n_R\}. 
 \end{equation*}
Because $\tilde{V}_1(\cdot)$ is bounded, it follows that for sufficiently small $\epsilon>0$
 \begin{equation}\label{lasa}
 -div\left(A(x,\nabla \overline U_n)\right)+ \frac{\lambda_n}{2}\overline U_n\leq 0, ~\forall x\in \Lambda_n^*.
 \end{equation}

Now take an arbitrary $x_0\in {\rm int}(\Lambda_n)$ and set $r_0:={\rm dist}(x_0, \partial \Lambda_n^*)$. Then $B_r(x_0)\subset \Lambda_n^*$. Define the radial function
\begin{equation*}
	\phi_n(|x-x_0|)=\lambda_n^{\frac{1}{2}}\frac{\cosh{(\gamma \lambda_n^{\frac{1}{2}}|x-x_0|})}{\cosh{(\gamma \lambda_n^{\frac{1}{2}}r_0})}, ~~\gamma>0.
\end{equation*}
Clearly,  $\phi'_n(\sigma)>0$ and $\phi''_n(\sigma)>0$ for $\sigma>0$. A direct calculation shows that
\begin{equation*}\label{chichi}
		2|\phi_n''|+\frac{4}{\sigma}\phi_n'+ \frac{C_{\Phi,\Omega}}{\sigma}\phi_n'- \frac{\lambda_n}{2}\phi_n\le 0,~~\forall \gamma\in (0, \gamma_*],
\end{equation*}
where $\gamma_*:=(12+2C_{\Phi,\Omega})^{-\frac{1}{2}}$. Define $\tilde \phi_n(x):=\phi_n(|x-x_0|)$. 
Fixing any $\gamma\in (0, \gamma_*]$, and using \eqref{rongyao} together with the inequality above, we obtain 
\begin{equation}\label{baile}
	\begin{split}
		div(A(x, \nabla \tilde\phi_n))-\frac{\lambda_n}{2}\tilde \phi_n
		\leq 0, ~\forall x\in B_r(x_0).
	\end{split}
\end{equation}
Combining  (\ref{lasa}) with (\ref{baile}) and applying the comparison principle (\cite[Theorem 10.1]{PS-2004}), we conclude 
\begin{eqnarray*}
\overline U_n\leq \tilde \phi_n~~\mbox{in}~~B_r(x_0),
\end{eqnarray*}
which immediately gives 
$$
|u_n(x_0)|\leq \lambda_n^{\frac{1}{2}} e^{-\gamma\lambda_n^{\frac{1}{2}}r}.
$$ 
Two possibilities arise for the distance $r_0={\rm dist}(x_0, \partial \Lambda_n^*)$:
\begin{itemize}
    \item [(i)] $r_0= dist(x_0, B_{R\lambda_n^{-1/2}}(P_n^i))$ for some $i\in \{1,\cdots,k\}$;
	\item [(ii)] $r_0= dist(x_0, \partial(\overline{\Omega}\cup \Omega^{\theta_*}))$.
\end{itemize}
In case (i), we have $|x_0- P_n^i|= r+ R\lambda_n^{-\frac{1}{2}}$, whence 
\[|u_n(x_0)|\leq e^{\gamma R}\lambda_n^{\frac{1}{2}} e^{-\gamma \lambda_n^{\frac{1}{2}}|x_0-P_n^i|}.\]
In case (ii), using the bound $|x_0-P_n^i|\le {\rm diam} (\Omega)$, we deduce 
\[|u_n(x_0)|\leq \lambda_n^{\frac{1}{2}} e^{-\gamma \lambda_n^{\frac{1}{2}}\theta_*}\leq \lambda_n^{\frac{1}{2}} e^{-\gamma \lambda_n^{\frac{1}{2}} \frac{\theta_*}{diam (\Omega)}|x_0-P_n^i|}.\]
Since $x_0$ was arbitrary,  the two cases together imply the existence of constants $C_1, C_2>0$ (depending on $\gamma, \theta_*$, ${\rm diam} (\Omega)$) such that
\begin{equation*}
	|u_n(x)|\leq C_1 e^{C_1 R}\lambda_n^{\frac{1}{2}}\sum_{i=1}^k e^{-C_2\lambda_n^{\frac{1}{2}}|x-P_n^i|}, ~\forall x\in \Lambda_n.
\end{equation*}
The same estimate holds for $\{v_n\}$:
\begin{equation*}
	|v_n(x)|\leq C_1 e^{C_1 R}\lambda_n^{\frac{1}{2}}\sum_{i=1}^k e^{-C_2\lambda_n^{\frac{1}{2}}|x-P_n^i|}, ~\forall x\in \Lambda_n.
\end{equation*}
Thus \eqref{globalbegrad eq} is proved, completing the proof. 
\end{proof}

\section{Proof of Theorem \ref{Thm1.1}}\label{proof of theorem}

In this section, we finalize the proof of Theorem \ref{Thm1.1}. We have constructed a sequence of mountain pass type critical points $\{(u_{\rho_n}, v_{\rho_n})\}$ for $J_{\rho_n}$ on $\mathcal{S}_a$, which possess a uniformly bounded Morse index as $\rho_n\to 1^-$.
By employing a detailed blow-up analysis, we proceed to show the boundedness of both $\{\lambda_n\}$ and $\{(u_{\rho_n},v_{\rho_n})\}$.
 \begin{prop}\label{compactnessgrad}
	Let $\{(u_n,v_n)\}\subset H^1(\Omega)\times H^1(\Omega)$ be a sequence of solutions to \eqref{withngrad} for some $\{\lambda_n\}\subset \mathbb{R}$ and $\rho_n\to 1^-$. Suppose that
	\[\int_{\Omega}\left(|u_n|^2+|v_n|^2 \right)dx=a~~ \text{and}~~m(u_n,v_n)\leq 2,~\forall n\in\mathbb{N},\]
	for some constant $a>0$, and that the energy levels  $\{c_n:=J_{\rho_n}(u_n,v_n)\}$ are bounded.
	Then the sequences $\{\lambda_n\}\subset \mathbb{R}$ and $\{(u_n,v_n)\}\subset H^1(\Omega)\times H^1(\Omega)$ are both bounded.
	Furthermore, $\{(u_n,v_n) \}$ is a bounded Palais-Smale sequence for $J$ on $\mathcal{S}_a$ at the level $c_1$.
\end{prop}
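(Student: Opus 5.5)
We argue by contradiction: since $\lambda_n\ge -C$ and $\{c_n\}$ is bounded, boundedness of $\{\lambda_n\}$ gives boundedness of $\{(u_n,v_n)\}$ in $H^1(\Omega)\times H^1(\Omega)$, as already remarked after \eqref{1-2-1}. So it suffices to rule out $\lambda_n\to+\infty$ along a subsequence. We may also assume $(u_n,v_n)$ is nontrivial, since the semi-trivial case was handled directly before Lemma \ref{udgrad}.

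Assume $\lambda_n\to+\infty$. By Lemma \ref{udgrad} the $L^\infty$ norms blow up, so Theorem \ref{globalbegrad} applies. It gives $k\in\{1,2\}$ blow-up points $P_n^i$, mutually separated at scale $\epsilon_n=\lambda_n^{-1/2}$, together with the exponential bound \eqref{globalbegrad eq} outside $\cup_i B_{R\epsilon_n}(P_n^i)$. Near each $P_n^i$, Theorem \ref{localbegrad} shows that the rescaled pair $(\epsilon_n u_n(\epsilon_n x+P_n^i),\epsilon_n v_n(\epsilon_n x+P_n^i))$ converges in $C^2_{loc}$ to a decaying solution of the limit system on $\mathbb{R}^3$ or on a half space. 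We will use these facts to compute the mass.

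On $B_{R\epsilon_n}(P_n^i)$ we have $|u_n|,|v_n|\le C\lambda_n^{1/2}$, so
\[\int_{B_{R\epsilon_n}(P_n^i)}(u_n^2+v_n^2)\,dx\le C\lambda_n R^3\epsilon_n^3=CR^3\lambda_n^{-1/2}.\]
Outside these balls, \eqref{globalbegrad eq} gives
\[\int \lambda_n e^{-2C_2\lambda_n^{1/2}|x-P_n^i|}\,dx\le C\lambda_n\cdot\lambda_n^{-3/2}=C\lambda_n^{-1/2}.\]
Hence $a=\int_\Omega(u_n^2+v_n^2)\,dx\le C(R)\lambda_n^{-1/2}\to0$, a contradiction. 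This is the $3$-dimensional mass-subcritical scaling: mass scales like $\epsilon_n^{3-2}$. It shows $\{\lambda_n\}$ is bounded, and therefore $\{(u_n,v_n)\}$ is bounded in $H^1(\Omega)\times H^1(\Omega)$.

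For the Palais--Smale part, we write
\[J_1'(u_n,v_n)+2\lambda_n(u_n,v_n)=(\rho_n-1)\bigl(u_nv_n^2,\;u_n^2v_n\bigr)\]
in the dual. The right-hand side tends to $0$ in $(H^1)'$ by boundedness and the Sobolev embedding $H^1\hookrightarrow L^4$. Thus $\|J_1'|_{\mathcal{S}_a}(u_n,v_n)\|\to0$. Also $J_1(u_n,v_n)-c_n=(\rho_n-1)\int_\Omega u_n^2v_n^2\,dx\to0$. Finally, $c_n=c_{\rho_n}\to c_1$ by left-continuity of the nonincreasing map $\rho\mapsto c_\rho$, which follows by the standard argument from monotonicity and the fixed path class $\Gamma$. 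This gives a bounded Palais--Smale sequence at level $c_1$.

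The main obstacle is justifying the mass estimate rigorously: the exponential bound must hold uniformly up to the boundary, and the number of bubbles must be bounded. Both are provided by Theorem \ref{globalbegrad}, so the remaining work is bookkeeping.
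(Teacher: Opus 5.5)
Your proposal is correct and follows the paper's argument. You assume $\lambda_n\to+\infty$ and use the blow-up decomposition of Theorem \ref{globalbegrad} (at most two bubbles plus uniform exponential decay) to show that $\lambda_n^{1/2}\int_\Omega(u_n^2+v_n^2)\,dx$ stays bounded, which contradicts the fixed mass $a$; bounding the inner contribution by $\|u_n\|_\infty\le C\lambda_n^{1/2}$ (i.e.\ $\tilde\lambda>0$) instead of by $C^2_{loc}$ convergence of the rescalings is only a cosmetic difference. Your check of the Palais--Smale property at level $c_1$ (constrained derivative, energy difference, and left-continuity of $\rho\mapsto c_\rho$) is also correct, up to a harmless missing factor $2$ in your identity for $J_1'$, and it fills in a part the paper leaves unproved.
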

\begin{proof}
Suppose that $\lambda_n\to +\infty$.
By Theorem \ref{globalbegrad}, there exist at most $k$ points, denoted as $\{P_n^1\},\cdots,\{P_n^k\}$, where $k\leq 2$.	
In what follows, we let  $(\bar{u}_n^i, \bar{v}_n^i)$  represent the scaled sequence centered around $P_n^i$ and
$(\bar{u}^i, \bar{v}^i)$ denote the limits of $(\bar{u}_n^i, \bar{v}_n^i)$ as $n\to+\infty$.

For such blow-up points, it is possible that either
\begin{equation*}
	\lambda_n^{\frac{1}{2}}dist(P_n^i,\partial \Omega)\to +\infty~~ \mbox{or}~~\lambda_n^{\frac{1}{2}}dist(P_n^i,\partial \Omega)\to d\geq 0.
\end{equation*}
Without loss of generality, we assume there exists an integer $k_1\in [0, k]$ such that for every $i\in \{1,\cdots, k_1\}$, we have $\lambda_n^{\frac{1}{2}}dist(P_n^i,\partial \Omega)\to +\infty$, and for every $j\in \{k_1+1,\cdots, k\}$, we have $\lambda_n^{\frac{1}{2}}dist(P_n^j,\partial \Omega)\to d\geq 0$.

Note that
 \begin{equation*}
		\bigg| \lambda_n ^{\frac{1}{2}}\int_{\Omega}\left(u_n^2 +v_n^2\right) dx \bigg|=\lambda_n ^{\frac{1}{2}}a\to +\infty.
	\end{equation*}
Using a similar argument as in Theorem \ref{localbegrad}, we have
	\begin{align*}
	&\sum_{i=1}^{k_1}\int_{B_R(0)}|\bar{u}_n^i|^2 dx\to \sum_{i=1}^k \int_{B_R(0)} |\bar{u}^i|^2 dx,\\
		&\sum_{i=1}^{k_1}\int_{B_R(0)}|\bar{v}_n^i|^2 dx\to \sum_{i=1}^k \int_{B_R(0)} |\bar{v}^i|^2 dx,
	\end{align*}
and
	\begin{equation*}
	\sum_{i=k_1+1}^{k}\int_{B_R(0)\cap \Omega_n}|\bar{u}_n^i|^2 dx\to \sum_{i=1}^k \int_{B_R(0)\cap \mathbb{R}_+^N} |\bar{u}^i|^2 dx,
	\end{equation*}
	\begin{equation*}
		\sum_{i=k_1+1}^{k}\int_{B_R(0)\cap \Omega_n}|\bar{v}_n^i|^2 dx\to \sum_{i=1}^k \int_{B_R(0)\cap \mathbb{R}_+^N} |\bar{v}^i|^2 dx.
	\end{equation*}
Therefore, for any $R>0$, it follows that $n\to+\infty$,
		\begin{align}\label{esitimategrad}
		&\bigg| \lambda_n ^{\frac{1}{2}}\int_{\Omega}\left(|u_n|^2 +|v_n|^2\right) dx-\sum_{i=1}^{k_1}\int_{B_R(0)}\left(|\bar{u}_n^i|^2 +|\bar{v}_n^i|^2 \right) dx\nonumber\\
    &-\sum_{i=k_1+1}^{k}\int_{B_R(0)\cap \Omega_n}\left(|\bar{u}_n^i|^2 +|\bar{v}_n^i|^2 \right)dx \bigg|\to +\infty.
	\end{align}
In view of Theorem \ref{globalbegrad}, there exists a constant $C>0$ such that
\begin{eqnarray}
\begin{aligned}
     &\bigg| \lambda_n ^{\frac{1}{2}}\int_{\Omega}u_n^2 dx
     -\sum_{i=1}^{k_1}\int_{B_R(0)}|\bar{u}_n^i|^2 dx-\sum_{i=k_1+1}^{k}\int_{B_R(0)\cap \Omega_n}|\bar{u}_n^i|^2 dx \bigg|\\
     &=\lambda_n ^{\frac{1}{2}}
         \bigg|\int_{\Omega}u_n^2 dx-\sum_{i=1}^{k_1}\int_{B_{R\lambda_n^{-\frac{1}{2}}}(P_n^i)}u_n^2 dx-\sum_{i=k_1+1}^{k}\int_{B_{R\lambda_n^{-\frac{1}{2}}}(P_n^i)\cap \Omega}u_n^2 dx \bigg|\\
     &= \lambda_n^{\frac{1}{2}}\int_{\Omega\backslash \cup_{i=1}^{k} (B_{R\lambda_n^{-\frac{1}{2}}}(P_n^i)\cap \Omega)}u_n^2 dx\\
     &\leq C_1^2 e^{2C_1 R}\lambda_n^{\frac{3}{2}}\sum_{i=1}^{k}\int_{\Omega\backslash \cup_{i=1}^{k}(B_{R\lambda_n^{-\frac{1}{2}}}(P_n^i)\cap \Omega)}e^{-2C_2\lambda_n^{\frac{1}{2}}|x-P_n^i|} dx\\
     &\leq C e^{C R}\sum_{i=1}^{k}\int_{\mathbb{R}^N\backslash B_R(0)} e^{-2C_2|x|} dx \leq Ce^{-C'R}.
\end{aligned}
\end{eqnarray}
By using a similar argument, we have
\[\bigg| \lambda_n ^{\frac{1}{2}}\int_{\Omega}v_n^2 dx
     -\sum_{i=1}^{k_1}\int_{B_R(0)}|\bar{v}_n^i|^2 dx-\sum_{i=k_1+1}^{k}\int_{B_R(0)\cap \Omega_n}|\bar{v}_n^i|^2 dx \bigg| \leq Ce^{-C'R}.\]
Taking the limit as $n\to +\infty$, we obtain
\begin{align*}
&\limsup_{n\to +\infty} \bigg| \lambda_n ^{\frac{1}{2}}\int_{\Omega}\left(|u_n|^2+|v_n|^2 \right)dx
     -\sum_{i=1}^{k_1}\int_{B_R(0)}\left(|\bar{u}_n^i|^2+|\bar{v}_n^i|^2 \right)dx\\
     &-\sum_{i=k_1+1}^{k}\int_{B_R(0)\cap\Omega_n}\left(|\bar{u}_n^i|^2+|\bar{v}_n^i|^2 \right)dx \bigg| \leq Ce^{-C'R},
     \end{align*}
which contradicts to \eqref{esitimategrad}.	
Hence, the sequence $\{\lambda_n\}$ is bounded. Thus, by standard arguments we deduce that $\{u_n\}$ and $\{v_n \}$ are both bounded in $H^1(\Omega)$. 
\end{proof}

\begin{proof}[Completion of proof of Theorem \ref*{Thm1.1}]
By Proposition \ref{compactnessgrad} and the compact embedding of $H^1(\Omega)\hookrightarrow L^r(\Omega)$ for $r\in [1,2^*)$, we can infer that for every $a\in (0, a^*)$, the sequence $(u_{\rho_n},v_{\rho_n})$ converges strongly to $(u^*,v^*)$ in $H^1(\Omega)\times H^1(\Omega)$ for some $\lambda^*\in \mathbb{R}$. This implies that $(u^*,v^*)$ is a normalized solution to \eqref{grad} of mountain pass type.
Moreover, similar to the proof of Lemma \ref{nontrivial sol}, it follows that for any $a\in (0, \hat a^*)$, the pair $(u^*,v^*)$ is a nontrivial solution.
This completes our proof.
\end{proof}

\section{Appendix}\label{liouville thm}
In this appendix,  we prove Propositions \ref{wholeliou} and \ref{halfliou} stated in Section \ref{blow up ana}. For brevity, we prove we give the details only for Proposition \ref{halfliou}; the proof of Proposition \ref{wholeliou} follows similarly.

Let $D$ be a domain in $\mathbb{R}^3$ and assume $(u,v)\in H_{loc}^1(D)\times H_{loc}^1(D)$ satisfies
\begin{eqnarray*}
\begin{cases}
-\Delta u= uv^2,  & x\in D,\\
-\Delta v=   u^2v,  & x\in D,\\
\frac{\partial u}{\partial \nu}=\frac{\partial v}{\partial \nu}=0, &x\in \partial D.
\end{cases}
\end{eqnarray*}
Consider the quadratic form
\begin{equation*}
\begin{split}
		\tilde{Q}_{u,v}(\phi,\psi;D):= &\int_{D}\left(|\nabla \phi |^2+|\nabla \psi|^2 \right)dx 
- \int_{D} \left(v^2\phi^2+ u^2\psi^2+ 4 uv\phi\psi \right)dx.
\end{split}
\end{equation*}
We say that $(u,v)$ has finite Morse index $k$ if $k$ is the maximum dimension of a subspace $W\subset C_c^1(D)\times C_c^1(D)$ such that $\tilde{Q}_{u,v}(\phi,\psi;D)<0$ for every nonzero $(\phi,\psi) \in W\backslash$.

We introduce a cut-off function
\begin{equation*}
\varphi_{r_1, r_2}(x):=
	\begin{cases}
		0,~~ |x|<r_1~ \mbox{or} ~|x|> 2r_2,\\
		1,~~ 2r_1 \leq |x|\leq r_2,
	\end{cases}
\end{equation*}
where $0<2r_1<r_2$. Moreover, 
\begin{eqnarray*}
|\nabla  \varphi_{r_1, r_2}|\leq \frac{2}{r_1} ~~\mbox{for}~~ r_1< |x|<2r_1,~~~~|\nabla  \varphi_{r_1, r_2}|\leq \frac{2}{r_2}~~\mbox{for}~~r_2< |x|<2r_2.
\end{eqnarray*}

\begin{lem}\label{wuqing}
	Let $(u,v)$ be a solution of \eqref{tdxitong} with finite Morse index. Then there exists $R_0>0$ such that
		\begin{equation}\label{fannco}
	\tilde{Q}_{u,v}(v\varphi_{R_0, R}, u\varphi_{R_0, R}; \mathbb{R}_+^3)\geq 0
	\end{equation}
	for any $R> R_0$.
\end{lem}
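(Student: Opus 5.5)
This is the standard ``stability outside a compact set'' consequence of finite Morse index (as in Farina or Bahri--Lions), adapted to the test pair $(v\varphi, u\varphi)$. Let $k$ be the Morse index of $(u,v)$.

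\textbf{Step 1: stability outside a large ball.} We claim there is $R_0>0$ such that $\tilde Q_{u,v}(\phi,\psi;\mathbb{R}^3_+)\ge 0$ for every $(\phi,\psi)\in C_c^1(\overline{\mathbb{R}^3_+})^2$ supported in $\{|x|>R_0\}$. Suppose not. We pick $(\phi_1,\psi_1)$ with $\tilde Q<0$, supported in some annulus $\{r_1<|x|<s_1\}$. Applying the failure of the claim with $R_0=s_1$ gives $(\phi_2,\psi_2)$ with $\tilde Q<0$ supported in $\{|x|>s_1\}$, hence in some annulus $\{s_1<|x|<s_2\}$. We iterate to obtain $k+1$ pairs with pairwise disjoint supports.

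Since the supports are disjoint, $\tilde Q$ is diagonal on their span: all cross terms (gradient, $v^2\phi\phi'$, $u^2\psi\psi'$, $uv\phi\psi'$) vanish. Every nonzero combination $\sum c_i(\phi_i,\psi_i)$ therefore has $\tilde Q=\sum c_i^2\tilde Q(\phi_i,\psi_i)<0$. This produces a $(k+1)$-dimensional negative subspace, which contradicts Morse index $k$. The Neumann condition causes no difficulty here, because test functions need not vanish on $\partial\mathbb{R}^3_+$; this matches the definition of the Morse index with the admissible class for the half-space problem.

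\textbf{Step 2: admissibility of the test pair.} For $R>R_0$ we take $\phi=v\varphi_{R_0,R}$ and $\psi=u\varphi_{R_0,R}$. Both are compactly supported in $\{R_0<|x|<2R\}$. Since $u,v$ are bounded solutions, elliptic regularity up to the flat Neumann boundary (via even reflection across $\{x_3=0\}$) gives $u,v\in C^{2}_{loc}(\overline{\mathbb{R}^3_+})$. Hence the pair lies in the form domain, for instance in $H^1$ with compact support, and if needed we pass to $C^1_c$ by density since $\tilde Q$ is continuous in $H^1$ on a fixed compact set. The cut-off $\varphi_{R_0,R}$ is Lipschitz, so we either smooth it or argue by density. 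Step 1 then yields \eqref{fannco}.

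\textbf{Main obstacle.} The only delicate point is the exhaustion argument in Step 1: each successive negative direction must be chosen with support beyond the previous ones. This works because compactly supported test functions can be localized within annuli, and $\tilde Q$ is local. The rest is routine.
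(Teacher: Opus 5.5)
Your argument is correct and uses the same idea as the paper: you produce $k+1$ negative directions of $\tilde Q_{u,v}$ supported in disjoint annuli, and locality of the form then gives a $(k+1)$-dimensional negative subspace, contradicting Morse index $k$. The difference is only organizational. You first prove stability for all test pairs supported outside $B_{R_0}$ (nonzero boundary values on $\partial\mathbb{R}^3_+$ allowed), then specialize and check that $(v\varphi_{R_0,R},u\varphi_{R_0,R})$ is admissible. The paper instead runs the exhaustion directly on the pairs $(v\varphi_{r_i,R_i},u\varphi_{r_i,R_i})$ with $r_{i+1}>2R_i$ and leaves the admissibility and boundary-class points implicit.
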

\begin{proof}
Without loss of generality, we suppose that the Morse index of $(u,v)$ is $k$. By contradiction, suppose that  \eqref{fannco} does not hold. Then there exist $r_1>0$ and $R_1> 2r_1$ such that
	 $\tilde Q_{u,v}(v\varphi_{r_1, R_1}, u\varphi_{r_1, R_1}; \mathbb{R}^3_+)<0$. 
	Similarly, we can find $r_2 > 2R_1$ and $R_2 > 2r_2$, such that $\tilde{Q}_{u,v}(v\varphi_{r_2, R_2}, u\varphi_{r_2, R_2}; \mathbb{R}^3_+)<0$.
	 
By iterating this process, we can obtain sequences $r_{i+1} > 2R_{i}$ and $R_{i+1} > 2r_{i+1}$ for $i=1,2,\cdots, k$, such that
	 \[\tilde{Q}_{u,v}(v\varphi_{r_{i+1}, R_{i+1}}, u\varphi_{r_{i+1}, R_{i+1}}; \mathbb{R}^3_+)<0.\]
Due to the choice of $\{r_i, R_i\}$, the functions $\{v\varphi_{r_i, R_i}\}_{i=1}^{k+1}$ and $\{u\varphi_{r_i, R_i}\}_{i=1}^{k+1}$ admits disjoint supports. Consequently, the above inequalities imply that the Morse index of $(u,v)$ is at least $k+1$. 
Thus, the proof is complete.
\end{proof}

\begin{lem}\label{hcwj}
	Let $(u,v)$ be a bounded solution of \eqref{tdxitong} with finite Morse index. Then
	\begin{equation}\label{ruyi}
		\int_{\mathbb{R}^3_+}\left( |\nabla u|^2 + |\nabla v|^2 \right)dx<\infty, ~~ \int_{\mathbb{R}^3_+} u^2v^2 dx <\infty.
	\end{equation}
\end{lem}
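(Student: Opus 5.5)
Plan: plug the test pair $(\phi,\psi)=(v\varphi,u\varphi)$, $\varphi=\varphi_{R_0,R}$, into the stability inequality of Lemma \ref{wuqing}, and combine it with the equations tested against $v^2\varphi^2$-type quantities to obtain a bound uniform in $R$. Throughout, $\varphi$ vanishes near $\{|x|<R_0\}$ and for $|x|>2R$. Since the support of $\varphi$ may meet $\partial\mathbb{R}^3_+$, every integration by parts must use the Neumann condition $\partial_\nu u=\partial_\nu v=0$ to kill the boundary terms. Boundedness of $u,v$ together with local elliptic regularity (the reflected solution is smooth) gives bounded gradients, so the fixed region $|x|<2R_0$ contributes only a constant.

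\textbf{Step 1: expand the quadratic form.} Compute
\begin{equation*}
\int|\nabla(v\varphi)|^2=\int|\nabla v|^2\varphi^2+\int v^2|\nabla\varphi|^2+\tfrac12\int\nabla(v^2)\cdot\nabla(\varphi^2).
\end{equation*}
Test the $v$-equation with $v\varphi^2$, which gives
\begin{equation*}
\int|\nabla v|^2\varphi^2+\tfrac12\int\nabla(v^2)\cdot\nabla(\varphi^2)=\int u^2v^2\varphi^2.
\end{equation*}
Hence $\int|\nabla(v\varphi)|^2=\int u^2v^2\varphi^2+\int v^2|\nabla\varphi|^2$, and by symmetry the same identity holds with $u$ and $v$ exchanged. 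The potential part of $\tilde Q$ is
\begin{equation*}
\int\left(v^2\cdot v^2\varphi^2+u^2\cdot u^2\varphi^2+4u^2v^2\varphi^2\right).
\end{equation*}
Lemma \ref{wuqing} therefore yields
\begin{equation*}
\int(u^4+v^4+2u^2v^2)\varphi^2\le\int(u^2+v^2)|\nabla\varphi|^2 .
\end{equation*}

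\textbf{Step 2: control the cutoff term.} The right-hand side is the main obstacle. On the annulus $R<|x|<2R$ it is bounded by $CR^{-2}\int_{B_{2R}^+}(u^2+v^2)$, and boundedness of $u,v$ only gives $O(R)$, which is not uniform in $R$. I would replace $\varphi$ by $\varphi^m$ with $m\ge2$ (the cutoff argument of Lemma \ref{wuqing} is unchanged) and absorb via Hölder:
\begin{equation*}
\int(u^2+v^2)|\nabla\varphi^m|^2\le C\Big(\int(u^4+v^4)\varphi^{2m}\Big)^{1/2}\Big(\int\varphi^{2m-4}|\nabla\varphi|^4\Big)^{1/2}.
\end{equation*}
The last factor is at most $CR^{(3-4)/2}$, which tends to $0$. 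Young's inequality then gives $\int_{B_R^+}(u^4+v^4+2u^2v^2)\le C+CR^{-1}$, uniformly in $R$. Letting $R\to\infty$ shows $u,v\in L^4(\mathbb{R}^3_+)$ and $\int u^2v^2<\infty$.

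\textbf{Step 3: the Dirichlet energy.} Test the $u$-equation with $u\psi_R^2$, where $\psi_R$ is a standard cutoff equal to $1$ on $B_R$ and supported in $B_{2R}$ (again no boundary terms by Neumann):
\begin{equation*}
\int|\nabla u|^2\psi_R^2\le\int u^2v^2\psi_R^2+2\int|u\nabla u||\psi_R\nabla\psi_R|.
\end{equation*}
Absorb half of the gradient term, so that it remains to bound $\int u^2|\nabla\psi_R|^2$. By Hölder this is at most $\|u\|_{L^4}^2\,\|\nabla\psi_R\|_{L^4}^2\le CR^{-1/2}$, which is bounded. Therefore $\int|\nabla u|^2<\infty$, and likewise $\int|\nabla v|^2<\infty$, which proves \eqref{ruyi}.
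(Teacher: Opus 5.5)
Your argument is correct. Its first step is exactly the paper's: stability applied to $(v\varphi,u\varphi)$, combined with the equations tested against $(u\varphi^2,v\varphi^2)$, gives $\int(u^2+v^2)^2\varphi^2\le\int(u^2+v^2)|\nabla\varphi|^2$.

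The routes separate at the cutoff term. The paper keeps the plain cutoff and applies H\"older only on the outer annulus. After discarding constants, it obtains a doubling inequality $H(R)\le CR^{-1/2}H(2R)^{1/2}$ for $H(R)=\int_{B_R^+}(u^2+v^2)^2$. It then argues by contradiction: assuming $H(R)\to\infty$, it iterates the inequality $k$ times and inserts the volume bound $H(\rho)\le C\rho^3$, which comes from $u,v\in L^\infty$, to force $H(R)\to0$.

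Your choice of $\varphi^m$ with $m\ge2$ makes the first H\"older factor coincide with the quantity on the left. Young's inequality then absorbs it and gives a bound uniform in $R$ directly, with no contradiction and no iteration. Moreover, boundedness of $u,v$ is then used only on a fixed compact region, where local integrability would suffice. The paper's iteration, by contrast, genuinely needs the polynomial growth of $H$.

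Your Step 3 also fills in what the paper merely asserts (``from the system''). Since $u,v$ need not lie in $L^2(\mathbb{R}^3_+)$, one cannot test with $u$ itself. Your cutoff argument using $u\in L^4$ and $\|\nabla\psi_R\|_{L^4}^2\le CR^{-1/2}$ is the proper justification.

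One small slip: $\int\varphi^{2m-4}|\nabla\varphi|^4$ also contains an $R$-independent contribution from the inner annulus $R_0<|x|<2R_0$. So that factor is bounded rather than tending to $0$. This is harmless, because the absorption only needs boundedness; equivalently, treat the inner annulus as a constant, as your final bound $C+CR^{-1}$ implicitly does.
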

\begin{proof}
	First, we define a cut-off function $\phi_R$ in $\mathbb{R}^3_+$.
Let $R_0$ be the constant defined in Lemma \ref{wuqing}. For any $R>R_0$, we define $\phi_R$ as follows:
\begin{equation*}
\phi_R(x) :=
\begin{cases}
0, & \text{if } |x| \leq R_0 \text{ or } |x| \geq 2R, \quad x \in \mathbb{R}^3_+, \\
1, & \text{if } 2R_0 \leq |x| \leq R, \quad x \in \mathbb{R}^3_+,
\end{cases}
\end{equation*}
with the additional property that$|\nabla \phi_R|\leq \frac{2}{R_0}$ for $R_0\leq |x|\leq 2R_0$ and $|\nabla \phi_R|\leq \frac{2}{R}$ for $R\leq |x|\leq 2R$.
 
By Lemma \ref{wuqing}, we have $\tilde{Q}_{u,v}(v\phi_R, u\phi_R; \mathbb{R}_+^3)\geq 0$.
This implies
 \begin{align}\label{12-31-1}
 &\int_{\mathbb{R}^3_+}\left[ v^4\phi_R^2+ u^4\phi_R^2 +4 u^2v^2\phi_R^2 \right]dx\nonumber\\
 &\leq \int_{\mathbb{R}^3_+}\left( |\nabla (v\phi_R)|^2 + |\nabla (u\phi_R)|^2 \right)dx\nonumber\\
&=\int_{\mathbb{R}^3_+}\left[ \phi_R^2 |\nabla v|^2   +2v\phi_R\nabla v\nabla \phi_R+ v^2|\nabla \phi_R|^2 \right]dx\nonumber\\
&\quad+\int_{\mathbb{R}^3_+} \left[\phi_R^2 |\nabla u|^2   +2u\phi_R\nabla u\nabla \phi_R+ u^2|\nabla \phi_R|^2\right]dx.    
 \end{align}
Taking $(u\phi_R^2, v\phi_R^2)$ as a test function in \eqref{tdxitong}, we get
 \begin{align*}\label{tianya}
 	 	 &\int_{\mathbb{R}^3_+} 2u^2v^2 \phi_R^2 dx\\
     &=\int_{\mathbb{R}^3_+}\left[ \phi_R^2 |\nabla u|^2 +2\phi_R u\nabla u\nabla \phi_R +\phi_R^2 |\nabla v|^2 +2\phi_R v\nabla v\nabla \phi_R \right]dx.
 \end{align*}
Substituting it into \eqref{12-31-1}, it follows that
\begin{align*}
	 &\int_{\mathbb{R}^3_+} (u^2+ v^2)^2\phi_R^2 dx\\
	 &\leq \int_{\mathbb{R}^3_+} \left(u^2|\nabla \phi_R|^2 + v^2 |\nabla \phi_R|^2 \right) dx\\
	 &\leq \int_{\{R_0\leq |x|\leq 2R_0\}\cap \mathbb{R}^3_+} (u^2+v^2) |\nabla \phi_R|^2 dx+\int_{\{R\leq |x|\leq 2R\}\cap \mathbb{R}^3_+} (u^2+v^2) |\nabla \phi_R|^2 dx\\
	 &\leq c_0+\left(\int_{\{R\leq |x|\leq 2R\}\cap \mathbb{R}^3_+} (u^2+v^2)^2 dx\right)^{\frac{1}{2}} \left(\int_{\{R\leq |x|\leq 2R\}\cap \mathbb{R}^3_+} |\nabla \phi_R|^4 dx\right)^{\frac{1}{2}}\\
	 &\leq c_0 + C\left(\int_{\{R\leq |x|\leq 2R\}\cap \mathbb{R}^3_+} (u^2+v^2)^2 dx\right)^{\frac{1}{2}}R^{-\frac{1}{2}}.
\end{align*}
If $\int_{\mathbb{R}^3_+} (u^2 + v^2)^2 dx=\infty$, then
 \[\int_{B_R^+} (u^2 + v^2)^2 dx\leq C\left(\int_{B_{2R}^+} (u^2+v^2)^2 dx\right)^{\frac{1}{2}}R^{-\frac{1}{2}},\]
where $B_R^+=\{x\in \mathbb{R}_+^3: |x|<R\}$.
Let $H(R)= \int_{B_R^+} (u^2 + v^2)^2 dx$.  Iterating the above inequality, we obtain
\begin{equation}\label{fengyu}
	H(R)\leq C R^{-\frac{1}{2}\alpha} H(2^{k+1}R)^{(\frac{1}{2})^{k+1}},
\end{equation} 
where $\alpha= 1+\frac{1}{2}+\frac{1}{2^2}+\cdots+\frac{1}{2^k}$.
Since $u$ and $v$ are bounded, we deduce that \[ H(R)\leq C R^{-\frac{1}{2}\alpha + \frac{3}{2^{k+1}}} .\]
By choosing $k$ large enough, we can ensure that  $-\frac{1}{2}\alpha + \frac{3}{2^{k+1}}<0$.
Thus, it follows from \eqref{fengyu} that $H(R)\to 0$ as $R\to \infty$,  which leads to a contradiction.
Therefore, we conclude that $\int_{\mathbb{R}^3_+} (u^2 + v^2)^2 dx<\infty$, which gives that $\int_{\mathbb{R}^3_+} u^2v^2 dx <\infty$.
Furthermore, from the system \eqref{tdxitong}, we also have
\begin{equation*}
	\int_{\mathbb{R}^3_+ }\left(|\nabla u|^2 +|\nabla v|^2 \right)dx< \infty. 
\end{equation*}
Hence (\ref{ruyi}) holds.
\end{proof}

\begin{lem}\label{kexn}
Let $(u,v)$ be a solution of \eqref{tdxitong}. Then the following identity holds
\begin{align}\label{locaPoho}
		&\frac{1}{2}\int_{B_R^+}\left(|\nabla u|^2+|\nabla v|^2 \right)dx- \frac{3}{2} \int_{B_R^+} u^2v^2 dx\nonumber\\
		 &=\frac{R}{2}\int_{\partial B_R^+}\left(|\nabla u|^2+|\nabla v|^2 \right)d\sigma 
		- R\int_{\partial B_R^+}\left(\left|\frac{\partial u}{\partial \nu} \right|^2+ \left|\frac{\partial v}{\partial \nu} \right|^2 \right)d\sigma\nonumber\\
    &\quad - \frac{R}{2} \int_{\partial B_R^+} u^2v^2 d\sigma.
\end{align}
\end{lem}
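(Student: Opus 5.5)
This identity is a localized Pohozaev identity, and I will derive it from the Rellich multiplier $x\cdot\nabla$. Throughout, $B_R^+=B_R(0)\cap\mathbb{R}^3_+$. Its boundary splits into the spherical cap $\partial B_R\cap\mathbb{R}^3_+$, where $\nu=x/R$, and the flat part $\{x_3=0\}\cap B_R$, where $\nu=-e_3$ and $x\cdot\nu=0$.

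By elliptic regularity up to the flat boundary (even reflection across $\{x_3=0\}$ gives a solution in $\mathbb{R}^3$), $u,v$ are smooth on $\overline{\mathbb{R}^3_+}$. I multiply the first equation by $x\cdot\nabla u$, the second by $x\cdot\nabla v$, add, and integrate over $B_R^+$.

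The nonlinear terms combine because $uv^2\,(x\cdot\nabla u)+u^2v\,(x\cdot\nabla v)=\tfrac12\,x\cdot\nabla(u^2v^2)$. Integrating by parts then gives
\[
-\tfrac32\int_{B_R^+}u^2v^2\,dx+\tfrac12\int_{\partial B_R^+}(x\cdot\nu)\,u^2v^2\,d\sigma .
\]

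For the Laplacian terms I use the standard identity
\[
-\Delta u\,(x\cdot\nabla u)=-\mathrm{div}\Big(\nabla u\,(x\cdot\nabla u)-\tfrac12 x|\nabla u|^2\Big)-\tfrac{N-2}{2}|\nabla u|^2,\qquad N=3 .
\]
Integrating it, the left side becomes
\[
-\tfrac12\int_{B_R^+}|\nabla u|^2\,dx-\int_{\partial B_R^+}\frac{\partial u}{\partial\nu}(x\cdot\nabla u)\,d\sigma+\tfrac12\int_{\partial B_R^+}(x\cdot\nu)|\nabla u|^2\,d\sigma ,
\]
and the same holds for $v$.

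On the flat part both boundary contributions vanish: $x\cdot\nu=0$ there, and $\partial_\nu u=\partial_\nu v=0$ by the Neumann condition. On the spherical cap, $x\cdot\nu=R$ and $x\cdot\nabla u=R\,\partial_\nu u$. Equating the two sides and rearranging, the spherical-cap integrals produce exactly the terms $\frac R2\int|\nabla u|^2$, $-R\int|\partial_\nu u|^2$ and $-\frac R2\int u^2v^2$ of \eqref{locaPoho}. Here $\partial B_R^+$ in the displayed identity is understood as the curved part of the boundary, since the flat part contributes nothing.

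The main obstacle is justifying the integration by parts up to the flat boundary, which needs $C^2$ regularity there. I handle this by reflection, or alternatively by applying the divergence theorem on $B_R\cap\{x_3>\delta\}$ and letting $\delta\to0$. The lateral terms that arise converge by $C^1$ regularity up to $x_3=0$, and they vanish in the limit because of the Neumann condition and $x\cdot e_3=x_3\to0$.
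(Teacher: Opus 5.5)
Your proposal is correct and follows the same route as the paper: test the system with $(x\cdot\nabla u,\, x\cdot\nabla v)$, integrate the Laplacian terms by parts, and write the nonlinear terms as $\frac12\,x\cdot\nabla(u^2v^2)$. You also treat the flat part of the boundary explicitly: there $x\cdot\nu=0$ and the Neumann condition kill every boundary contribution, so $\partial B_R^+$ in \eqref{locaPoho} must be read as the spherical cap $\partial B_R\cap\mathbb{R}^3_+$, which makes precise a point the paper's computation leaves implicit.
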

\begin{proof}
By taking $(x\cdot \nabla u, x\cdot \nabla v)$ as the test function, it follows from \eqref{tdxitong} that
	\begin{align*}
		&-\int_{B_R^+} \Delta u (x\cdot \nabla u) dx \\
    &=\int_{B_R^+} \nabla u  \nabla(x\cdot \nabla u) dx-\int_{\partial B_R^+}(x\cdot \nabla u)(\nu\cdot \nabla u) d\sigma \\
		&= \int_{B_R^+} \nabla u  \nabla(x\cdot \nabla u) dx- R\int_{\partial B_R^+} \left|\frac{\partial u}{\partial \nu} \right|^2 d\sigma\\
		&=\int_{B_R^+} |\nabla u|^2 dx-\frac{3}{2}\int_{B_R^+} |\nabla u|^2 dx+ \frac{R}{2}\int_{\partial B_R^+} |\nabla u|^2 d\sigma-R\int_{\partial B_R^+} \left|\frac{\partial u}{\partial \nu} \right|^2 d\sigma\\
		&=-\frac{1}{2}\int_{B_R^+} |\nabla u|^2 dx+\frac{R}{2}\int_{\partial B_R^+} |\nabla u|^2 d\sigma-R\int_{\partial B_R^+} \left|\frac{\partial u}{\partial \nu} \right|^2 d\sigma,
	\end{align*}
and
\begin{align*}
		&-\int_{B_R^+} \Delta v (x\cdot \nabla v) dx\\
    &=-\frac{1}{2}\int_{B_R^+} |\nabla v|^2 dx+\frac{R}{2}\int_{\partial B_R^+} |\nabla v|^2 d\sigma-R\int_{\partial B_R^+} \left|\frac{\partial v}{\partial \nu} \right|^2 d\sigma.
	\end{align*}
Next, considering the term involving $uv^2$ and $u^2v$, we deduce that
\begin{align*}
		\int_{B_R^+}\left[ uv^2 (x\cdot \nabla u) +u^2v (x\cdot \nabla v)\right] dx=-\frac{3}{2}\int_{B_R^+} u^2v^2 dx+\frac{R}{2}\int_{\partial B_R^+} u^2v^2 d\sigma.
	\end{align*}	
Combining these results, we obtain \eqref{locaPoho} and this completes the proof.
\end{proof}

\begin{proof}[Proof of Proposition \ref{halfliou}.]
By Lemma \ref{hcwj}, we have
\begin{equation*}
	\int_{\mathbb{R}^3_+}\left( |\nabla u|^2 + |\nabla v|^2 \right) dx<\infty, ~~ \int_{\mathbb{R}^3_+} (u^2 + v^2)^2 dx <\infty.
\end{equation*}
Then there exists a sequence $\{R_n\}\subset\mathbb{R}^+$ with $R_n\to \infty$ such that
\begin{align*}
  &\frac{R_n}{2}\int_{\partial B_{R_n}^+}\left(|\nabla u|^2+|\nabla v|^2\right) d\sigma 
		- R_n\int_{\partial B_{R_n}^+}\left(\left|\frac{\partial u}{\partial \nu} \right|^2+ \left|\frac{\partial v}{\partial \nu} \right|^2\right) d\sigma \\
    &- \frac{R_n}{2} \int_{\partial B_{R_n}^+} u^2v^2 d\sigma\to 0. 
\end{align*}
Hence, using \eqref{locaPoho} we obtain
\[\frac{1}{2}\int_{\mathbb{R}^3_+}\left(|\nabla u|^2+|\nabla v|^2\right)dx - \frac{3}{2} \int_{\mathbb{R}^3_+} u^2v^2 dx=0. \]
On the other hand, by virtue of \eqref{tdxitong}, we deduce
\[\int_{\mathbb{R}^3_+}\left( |\nabla u|^2 + |\nabla v|^2 \right)dx =2\int_{\mathbb{R}^3_+} u^2v^2  dx.\]
Therefore, it follows that
\begin{equation*}
	\int_{\mathbb{R}^3_+}\left( |\nabla u|^2 + |\nabla v|^2 \right)dx=\int_{\mathbb{R}^3_+} u^2v^2  dx=0,
\end{equation*}
which implies that $u\equiv v\equiv 0$. The proof is complete.
\end{proof}

\section*{Acknowledgements}
The research of Xiaojun Chang is partially supported by the Research Project of the Education Department of Jilin Province (JJKH20250296KJ), NSFC (12471102) and NSF of Jilin Province (20250102004JC). Yohei Sato is partially supported by JSPS KAKENHI(JP20K03691).


\subsection*{Author contributions}
All authors contributed equally to this work.

\subsection*{Data availability}
Data sharing not applicable to this article as no data sets were generated or analysed during
the current study.

\subsection*{Declarations}
The authors declare that they have no conflict of interest.


\begin{thebibliography}{99}
 \bibitem{Alves-2007} Alves, C. O.:
 	Local mountain pass for a class of elliptic system.
 	J. Math. Anal. Appl. \textbf{335}, 135-150 (2007)

  \bibitem{AJM2022}
Alves, C. O., Ji, C., Miyagaki, O. H.:
Normalized solutions for a Schr\"{o}dinger equation with critical growth in $\mathbb{R}^{N}$.
Calc. Var. Partial Differential Equations \textbf{61}(1), 24 (2022)

 \bibitem{Alves-2005}  
 Alves, C. O., Soares, S. H. M.:
Existence and concentration of positive solutions for a class of gradient systems.
NoDEA Nonlinear Differential Equations Appl. \textbf{12}, 437-457 (2005)
  

\bibitem{BL-1992} 
Bahri, A., Lions, P. L.:
Solutions of superlinear elliptic equations and their Morse indices.
Comm. Pure Appl. Math. \textbf{45}, 1205-1215 (1992)

\bibitem{BJ2018}  
Bartsch, T., Jeanjean, L.:
Normalized solutions for nonlinear Schr\"odinger systems.
Proc. Roy. Soc. Edinburgh Sect. A. \textbf{148}, 225-242 (2018)



\bibitem{BJS2016}  
Bartsch, T., Jeanjean, L., Soave, N.:
Normalized solutions for a system of coupled cubic Schr\"odinger equations on $\mathbb{R}^3$.
J. Math. Pures Appl. (9) \textbf{106}, 583-614 (2016)

\bibitem{BMRV2021}  
Bartsch, T., Molle, R., Rizzi, M., Verzini, G.:
Normalized solutions of mass supercritical Schr\"odinger equations with potential.
Comm. Partial Differential Equations \textbf{46}, 1729-1756 (2021)


\bibitem{Bartsch2017}  
Bartsch, T., Soave, N.:
A natural constraint approach to normalized solutions of nonlinear {S}chr\"odinger equations and systems.
J. Funct. Anal. \textbf{272}, 4998-5037 (2017)

\bibitem{BS2019} 
Bartsch, T., Soave, N.:
Multiple normalized solutions for a competing system of Schr\"odinger equations.
Calc. Var. Partial Differential Equations \textbf{58}, Paper No. 22, 24 pp (2019)


\bibitem{BQZ2024}  
Bartsch, T., Qi, S. J., Zou, W. M.:
Normalized solutions to Schr\"odinger equations with potential and inhomogeneous nonlinearities on large smooth domains.
Math. Ann. \textbf{390}, 4813-4859 (2024)


\bibitem{BZW-2021}  
Bartsch, T., Zhong, X. X., Zou, W. M.:
Normalized solutions for a coupled Schrödinger system.
Math. Ann. \textbf{380}, 1713-1740 (2021)

\bibitem{Berestycki-1983} 
Berestycki, H., Lions, P. L.:
Nonlinear scalar field equations, II: Existence of infinitely many solutions.
Arch. Ration. Mech. Anal. \textbf{82}, 347-375 (1983)


\bibitem{BM2020} 
Bieganowski, B., Mederski, J.:
Normalized ground states of the nonlinear Schr\"{o}dinger equation with at least mass critical growth.
J. Funct. Anal. \textbf{280}, 108989 (2021)


\bibitem{BCJS2024} 
Borthwick, J., Chang, X. J., Jeanjean, L., Soave, N.:
Bounded Palais-Smale sequences with Morse type information for some constrained functionals.
Trans. Amer. Math. Soc. \textbf{377}, 4481-4517 (2024)



\bibitem{BCJS2023} 
Borthwick, J., Chang, X. J., Jeanjean, L., Soave, N.:
Normalized solutions of $L^2$-supercritical NLS equations on noncompact metric graphs with localized nonlinearities.
Nonlinearity \textbf{36}, 3776-3795 (2023) 

\bibitem{CL1}  
Cazenave, T., Lions, P. L.:
Orbital stability of standing waves for some nonlinear {S}chr\"{o}dinger equations.
Comm. Math. Phys. \textbf{85}, 549-561 (1982)

\bibitem{CLY2025}  
Chang, X. J., Liu, M. T., Yan, D. K.:
Positive normalized solutions of Schr\"odinger equations with Sobolev critical growth in bounded domains.
Preprint \href{https://arxiv.org/abs/2505.07578}{arXiv:2505.07578} (2025)

\bibitem{CJS2024} 
Chang, X. J., Jeanjean, L., Soave, N.:
Normalized solutions of $L^2$-supercritical {NLS} equations on compact metric graphs.
Ann. Inst. H. Poincar\'e (C) Anal. Non Lin\'eaire \textbf{41}, 933-959 (2024)



\bibitem{CRZ2025} 
Chang, X. J., R\u{a}dulescu, V. D., Zhang, Y. X.:
Solutions with prescribed mass for $L^2$-supercritical NLS equations under Neumann boundary conditions.
Proc. Roy. Soc. Edinburgh Sect. A. (2025). \href{https://doi.org/10.1017/prm.2025.24}{https://doi.org/10.1017/prm.2025.24}


\bibitem{CS-2019} 
Chang, X. J., Sato, Y.:
Localized solutions of nonlinear Schr\"odinger systems with critical frequency for infinite attractive case.
NoDEA Nonlinear Differential Equations Appl. \textbf{26}, 1-31 (2019)


\bibitem{CS-2020} 
Chang, X. J., Sato, Y.:
Multiplicity of localized solutions of nonlinear Schr\"odinger systems for infinite attractive case.
J. Math. Anal. Appl. \textbf{491}, 124358, 17 pp (2020)

 \bibitem{CT2024}
Chen, S. T., Tang, X. H.:
Another look at Schr\"odinger equations with prescribed mass.
J. Differential Equations \textbf{386}, 435-479 (2024)

\bibitem{CDGS2025} 
De Coster, C., Dovetta, S., Galant, D., Serra, E.:
An action approach to nodal and least energy normalized solutions for nonlinear Schr\"odinger equations.
Ann. Inst. H. Poincar\'e (C) Anal. Non Lin\'eaire (2025). \href{https://doi.org/10.4171/AIHPC/160}{https://doi.org/10.4171/AIHPC/160}


\bibitem{DJS2024} 
Dovetta, S., Jeanjean, L., Serra, E.:
Normalized solutions of $L^2$-supercritical NLS equations on noncompact metric graphs.
Proc. Roy. Soc. Edinburgh Sect. A. (2025). \href{https://doi.org/10.1017/prm.2025.29}{https://doi.org/10.1017/prm.2025.29}


\bibitem{DST2025}
Dovetta, S., Serra, E., Tentarelli, L.: 
Non-uniqueness of normalized NLS ground states on polygons with homogeneous Neumann boundary conditions.
Discrete Continuous Dyn. Syst. (2025). \href{https://doi.org/10.3934/dcds.2025182}{https://doi.org/10.3934/dcds.2025182}

\bibitem{EP2011} 
Esposito, P., Petralla, M.:
Pointwise blow-up phenomena for a Dirichlet problem.
Comm. Partial Differential Equations \textbf{36}, 1654-1682 (2011)

\bibitem{F-2007} 
Farina, A.:
On the classification of solutions of the Lane-Emden equation on unbounded domains of $\mathbb{R}^N$.
J. Math. Pures Appl. (9) \textbf{87}, 537-561 (2007)

\bibitem{F2010} 
Frantzeskakis, D. J.:
Dark solitons in atomic Bose-Einstein condensates: from theory to experiments.
J. Phys. A: Math. Theor. \textbf{43}, Paper No. 68, 82 pp (2010)

\bibitem{GJ2018} 
Gou, T. X., Jeanjean, L.:
Multiple positive normalized solutions for nonlinear Schr\"odinger systems.
Nonlinearity \textbf{31}, 2319-2345 (2018)

\bibitem{GH2025} 
Guo, Q. D., Hua, Q. Q.:
Segregated solutions for a nonlinear Schr\"odinger system involving mass supercritical exponents. 
Nonlinearity \textbf{38}, 035009, 45pp (2025)

\bibitem{GY2023}  
Guo, Q., Yang, J.:
Excited states for two-component Bose-Einstein condensates in dimension two.
J. Differential Equations \textbf{343}, 659-686 (2023)



\bibitem{GuoLW-2019}  
Guo, Y. J., Li, S., Wei, J. C., Zeng, X. Y.:
Ground states of two-component attractive Bose-Einstein condensates I: Existence and uniqueness.
J. Funct. Anal. \textbf{276}, 183-230 (2019)

\bibitem{GuoLW2019TAMS}  
Guo, Y. J., Li, S., Wei, J. C., Zeng, X. Y.:
Ground states of two-component attractive Bose-Einstein condensates II: Semi-trivial limit behavior.
Trans. Amer. Math. Soc. \textbf{371}, 6903-6948 (2019)


\bibitem{HRS-1998-1}  
Harrabi, A., Rebhi, S., Selmi, S.:
Solutions of superlinear equations and their Morse indices I.
Duke Math. J. \textbf{94}, 141-157 (1998)


\bibitem{HRS-1998-2} 
Harrabi, A., Rebhi, S., Selmi, S.:
Solutions of superlinear equations and their Morse indices II.
Duke Math. J. \textbf{94}, 159-179 (1998)

 


\bibitem{IM2020} 
Ikoma, N., Miyamoto, Y.:
Stable standing waves of nonlinear Schr\"odinger equations with potentials and general nonlinearities.
Calc. Var. Partial Differential Equations \textbf{59}, Paper No. 48, 20 pp (2020)


\bibitem{IkNo} 
Ikoma, N., Tanaka, K.:
A note on deformation argument for $L^2$ normalized solutions of nonlinear {S}chr\"odinger equations and systems.
Adv. Differential Equations \textbf{24}, 609-646 (2019)


\bibitem{Jeanjean1997} 
Jeanjean, L.:
Existence of solutions with prescribed norm for semilinear elliptic equations.
Nonlinear Anal. \textbf{28}, 1633-1659 (1997)





\bibitem{JJLV2022}  
Jeanjean,L., Jendrej, J., Le, T. T., Visciglia, N.:
Orbital stability of ground states for a Sobolev critical Schr\"odinger equation.
J. Math. Pures Appl. (9) \textbf{164}, 158-179 (2022)


\bibitem{JL2022}  
Jeanjean, L., Le, T. T.:
Multiple normalized solutions for a Sobolev critical Schr\"odinger equations.
Math. Ann. \textbf{384}, 101-134 (2022)


\bibitem{JL2020} 
Jeanjean, L., Lu, S. S.:
A mass supercritical problem revisited.
Calc. Var. Partial Differential Equations \textbf{59}, Paper No. 174, 43 pp (2020)


\bibitem{JZZ2024}  
Jeanjean, L., Zhang, J. J., Zhong, X. X.:
Normalized ground states for a coupled Schr\"odinger system: Mass super-critical case.
NoDEA Nonlinear Differential Equations Appl. \textbf{31}, Paper No. 85, 26 pp (2024)


\bibitem{LM2024}  
Lancelotti, S., Molle, R.:
Normalized positive solutions for Schr\"odinger equations with potentials in unbounded domains.
Proc. Roy. Soc. Edinburgh Sect. A. \textbf{154}, 1518-1551 (2024)

\bibitem{L-2021}  
Le, P.:
Stable and finite Morse index solutions of a nonlinear Schr\"odinger system.
NoDEA Nonlinear Differential Equations Appl. \textbf{28}, Paper No. 39, 16 pp (2021)




\bibitem{Malomed2008}  
Malomed, B.:
Multi-component Bose-Einstein condensates: theory.
P.G. Kevrekidis, D.J. Frantzeskakis, R. Carretero-Gonzalez (Eds.), Emergent Nonlinear Phenomena in Bose-Einstein Condensation, Springer-Verlag, Berlin, 287-305 (2008)

\bibitem{MRV2022}  
Molle, R., Riey, G., Verzini, G.:
Normalized solutions to mass supercritical Schr\"odinger equations with negative potential.
J. Differential Equations \textbf{333}, 302-331 (2022)




\bibitem{NT-1991} 
Ni, W. M., Takagi, I.:
On the shape of least-energy solutions to a semilinear Neumann problem.
Comm. Pure Appl. Math. \textbf{44}, 819-851 (1991)


\bibitem{NT-1993} 
Ni, W. M., Takagi, I.:
Locating the peaks of least-energy solutions to a semilinear Neumann problem.
Duke Math. J. \textbf{70}, 247-281 (1993)


\bibitem{N-1966} 
Nirenberg, L.:
An extended interpolation inequality.
Ann. Scuola Norm. Sup. Pisa Cl. Sci. (3) \textbf{20}, 733-737 (1966)

\bibitem{NTV-2014}  
Noris, B., Tavares, H., Verzini, G.:
Existence and orbital stability of the ground states with prescribed mass for the $L^2$-critical and supercritical NLS on bounded domains.
Anal. PDE \textbf{7}, 1807-1838 (2014)



\bibitem{NTV-2015} 
Noris, B., Tavares, H., Verzini, G.:
Stable solitary waves with prescribed $L^2$-mass for the cubic Schr\"odinger system with trapping potentials.
Discrete Continuous Dyn. Syst. \textbf{35}, 6085-6112 (2015)


\bibitem{NTV-2019} 
Noris, B., Tavares, H., Verzini, G.:
Normalized solutions for nonlinear Schr\"odinger systems on bounded domains.
Nonlinearity \textbf{32}, 1044-1072 (2019)


\bibitem{PPVV-2021} 
Pellacci, B., Pistoia, A., Vaira, G., Verzini, G.:
Normalized concentrating solutions to nonlinear elliptic problems.
J. Differential Equations \textbf{275}, 882-919 (2021)



\bibitem{PV-2017} 
Pierotti, D., Verzini, G.:
Normalized bound states for the nonlinear {S}chr\"odinger equation in bounded domains.
Calc. Var. Partial Differential Equations \textbf{56}, Paper No. 133, 27 pp (2017)


\bibitem{PVY-2025} 
Pierotti, D., Verzini, G., Yu, J. W.:
Normalized solutions for Sobolev critical Schr\"odinger equations on bounded domains.
SIAM J. Math. Anal. \textbf{57}, 262-285 (2025)



\bibitem{PS-2004}  
Pucci, P., Serrin, J.:
The strong maximum principle revisited.
J. Differential Equations \textbf{196}, 1-66 (2004)



\bibitem{SW-2013}
Sato, Y., Wang, Z.-Q.:
On the multiple existence of semi-positive solutions for a nonlinear Schr\"odinger system. 
Ann. Inst. H. Poincar\'e C Anal. Non Lin\'eaire \textbf{30}, 1-22 (2013)

\bibitem{SW-2015}
Sato, Y., Wang, Z.-Q.:
Least energy solutions for nonlinear Schr\"odinger systems with mixed attractive and repulsive couplings.
Adv. Nonlinear Stud. \textbf{15}, 1-22 (2015)



\bibitem{Soave-2020JDE} 
Soave, N.:
Normalized ground states for the NLS equation with combined nonlinearities.
J. Differential Equations \textbf{269}, 6941-6987 (2020)



\bibitem{Soave-2020JFA} 
Soave, N.:
Normalized ground states for the NLS equation with combined nonlinearities: the Sobolev critical case.
J. Funct. Anal. \textbf{279}, Paper No. 108610, 43 pp (2020)

\bibitem{SZ2025} 
Song, L. J., Zou, W. M.:
On multiple sign-changing normalized solutions to the Br\'ezis-Nirenberg problem.
Math. Z. \textbf{309}, Paper No. 61, 23 pp (2025)

\bibitem{WC2024}  
Wang, Q., Chang, X. J.:
Normalized solutions of $L^2$-supercritical Kirchhoff equations in bounded domains.
J. Geom. Anal. \textbf{34}, Paper No. 358, 30 pp (2024)




\bibitem{WW-2022}  
Wei, J. C., Wu, Y. Z.,
Normalized solutions for Schr\"{o}dinger equations with critical sobolev exponent and mixed nonlinearities.
J. Funct. Anal. \textbf{283}, Paper No. 109574, 46 pp (2022)

\bibitem{YZ-2019}  
Yang, H., Zou, W. M.:
Stable and finite Morse index solutions of a nonlinear elliptic system.
J. Math. Anal. Appl. \textbf{471}, 147-169 (2019)

\bibitem{ZZ2022}
Zhang, C. X., Zhang, X.: 
Normalized multi-bump solutions of nonlinear Schr\"odinger equations via variational approach.
Calc. Var. Partial Differential Equations \textbf{61}, Paper No. 57, 20 pp (2022)

 







\end{thebibliography}
\end{document}